\documentclass[12pt, a4paper]{amsart}
\usepackage{amsmath}
\usepackage{geometry,amsthm,graphics,tabularx,amssymb,shapepar}
\usepackage{amscd}
\usepackage[all]{xypic}

\newcommand{\GL}{{\mathrm{GL}}}

\newcommand{\Hom}{{\mathrm{Hom}}}

\newcommand{\SL}{{\mathrm{SL}}}

\newcommand{\SO}{{\mathrm{SO}}}
\newcommand{\GO}{{\mathrm{GO}}}

\newcommand{\GU}{{\mathrm{GU}}}

\newcommand{\sgn}{\operatorname{sgn}}

\newcommand{\od}{\operatorname{d}}

\newcommand{\oL}{\operatorname{L}}

\newcommand{\oH}{\operatorname{H}}
\newcommand{\oO}{\operatorname{O}}

\newcommand{\oT}{\operatorname{T}}
\newcommand{\oU}{\operatorname{U}}

\newcommand{\oW}{\textit{W}}

\newcommand{\g}{\mathfrak g}

\renewcommand{\k}{\mathfrak k}
\newcommand{\h}{\mathfrak h}

\renewcommand{\b}{\mathfrak b}
\renewcommand{\c}{\mathfrak c}
\newcommand{\n}{\mathfrak n}

\renewcommand{\l}{\mathfrak l}
\renewcommand{\t}{\mathfrak t}
\newcommand{\s}{\mathfrak s}

\renewcommand{\o}{\mathfrak o}

\renewcommand{\sl}{\mathfrak s \mathfrak l}
\newcommand{\gl}{\mathfrak g \mathfrak l}

\newcommand{\Z}{\mathbb{Z}}
\newcommand{\C}{\mathbb{C}}
\newcommand{\R}{\mathbb R}

\newcommand{\K}{\mathbb{K}}

\newcommand{\abs}[1]{\lvert#1\rvert}

\newcommand{\la}{\langle}
\newcommand{\ra}{\rangle}

\newcommand{\be}{\begin {equation}}
\newcommand{\ee}{\end {equation}}
\newcommand{\bee}{\begin {equation*}}
\newcommand{\eee}{\end {equation*}}

\newcommand{\cf}{\emph{cf.}~}

\theoremstyle{Theorem}

\theoremstyle{Theorem}
\newtheorem{introconjecture}{Conjecture}

\newtheorem{introproposition}[introconjecture]{Proposition}
\newtheorem{introtheorem}[introconjecture]{Theorem}

\theoremstyle{Theorem}
\newtheorem{lem}{Lemma}[section]

\newtheorem{prpl}[lem]{Proposition}

\theoremstyle{Theorem}

\theoremstyle{Plain}

\theoremstyle{remark}
\newtheorem*{example}{Example}

\theoremstyle{Definition}

\begin{document}

\title[The nonvanishing hypothesis]{The nonvanishing hypothesis at infinity for Rankin-Selberg convolutions}

\author{Binyong Sun}

\address{Institute of Mathematics and Hua Loo-Keng Key Laboratory of Mathematics, AMSS, Chinese Academy of Sciences, Beijing, 100190, China} \email{sun@math.ac.cn}

\subjclass[2000]{22E41, 22E47} \keywords{Cohomological representation, Rankin-Selberg convolution, Critical value, L-function}


\begin{abstract}
We prove the nonvanishing hypothesis at infinity for Rankin-Selberg convolutions for $\GL(n)\times \GL(n-1)$.
\end{abstract}

 \maketitle


\section{Introduction}\label{s1}

The goal of this paper is to solve a long awaited problem which appears in the arithmetic study of special values of L-functions. It is called  the nonvanishing hypothesis in the literature.

We first treat the more involved case of real groups, and leave the complex case to Section \ref{complexg}. Fix an integer $n\geq 2$, and fix a decreasing sequence
\[
  \mu=(\mu_1\geq \mu_2\geq \cdots \geq \mu_n)\in \Z^n.
\]
Denote by $F_{\mu}$ the irreducible algebraic representation of $\GL_n(\C)$ of highest weight $\mu$.
Denote by $\Omega(\mu)$ the set of isomorphism
classes of irreducible Casselman-Wallach representations $\pi$ of
$\GL_{n}(\R)$ such that
\begin{itemize}
                           \item
                            $\pi|_{{\operatorname{SL}_n^{\pm}}(\R)}$ is unitarizable and
                            tempered; and
                           \item the total relative Lie algebra cohomology
                           \begin{equation}\label{cohom}
                             \oH^*(\gl_{n}(\C),\GO(n)^\circ;
                           F_\mu^\vee\otimes \pi)\neq 0,
                           \end{equation}
                             \end{itemize}
where
\[
  {\operatorname{SL}_n^{\pm}}(\R):=\{g\in \GL_n(\R)\mid \det(g)=\pm 1\},
  \]
  and
\[
  \GO(n):=\{g\in \GL_n(\R)\mid g^{\mathrm t} g\textrm{ is a scalar matrix}\}.
  \]
Here and henceforth, a superscript ``$\,\mathrm t\,$" over a matrix indicates its transpose, a superscript ``$\,\circ\,$" over a Lie group indicates its identity connected component, and ``$\,^\vee$" indicates the contragredient representation. Recall that a representation of a real reductive group is called a Casselman-Wallach representation if it is smooth, Fr\'{e}chet, of moderate growth, and its Harish-Chandra module has finite length. The reader may consult \cite{Cass}, \cite[Chapter
11]{Wa2} or \cite{BK} for details about Casselman-Wallach representations. As is quite common, we do not distinguish a representation with its
underlying space, or an irreducible representation with its isomorphism class.

The set $\Omega(\mu)$ is explicitly determined by Vogan-Zuckerman theory \cite{VZ} of cohomological representations. In particular \cite[Section 3]{Clo},
\begin{equation}\label{pimu3}
  \#(\Omega(\mu))=\left\{
                \begin{array}{ll}
                  0, & \hbox{if $\mu$ is not pure;} \\
                  1, & \hbox{if $\mu$ is pure and $n$ is even;} \\
                  2, & \hbox{if $\mu$ is pure and $n$ is odd.}
                \end{array}
              \right.
\end{equation}
Here ``$\mu$ is pure" means that
\begin{equation}\label{puremu}
  \mu_{1}+\mu_{n}=\mu_{2}+\mu_{n-1}=\cdots=\mu_{n}+\mu_{1}.
\end{equation}
Recall the sign character
\[
\sgn:=\det\,\abs{\det}^{-1}
\]
of a real general linear group.   In the second case of \eqref{pimu3}, the only representation in $\Omega(\mu)$ is isomorphic to its twist by the sign character. In the third case of \eqref{pimu3}, the two representations in $\Omega(\mu)$ are twists of each other by the sign character.

 Assume that $\mu$ is pure, and let $\pi_\mu\in \Omega(\mu)$. Put
\[
  b_n:={\lfloor}\frac{n^2}{4}{\rfloor}.
\]
Then \cite[Lemma 3.14]{Clo}
\[
  \oH^b(\gl_{n}(\C),\GO(n)^\circ;
                           F_\mu^\vee\otimes \pi_\mu)=0,\qquad \textrm{if $b<b_n$,}
\]
and
\[
  \dim \oH^{b_n}(\gl_{n}(\C),\GO(n)^\circ;
                           F_\mu^\vee\otimes \pi_\mu)=\left\{
                                                   \begin{array}{ll}
                                                     2, & \hbox{if $n$ is even;} \\
                                                     1, & \hbox{if $n$ is odd.}
                                                   \end{array}
                                                 \right.
\]
Write $\g\o_n$ for the complexified Lie algebra of $\GO(n)$. The group $\GO(n)$ acts linearly on both $\gl_n(\C)/\g\o_n$ and $F_\mu^\vee\otimes \pi_\mu$. Passing to cohomology, we get a representation of $\GO(n)/\GO(n)^\circ$ on
\begin{equation}\label{acth}
  \oH(\pi_\mu):=\oH^{b_n}(\gl_{n}(\C),\GO(n)^\circ;
                           F_\mu^\vee\otimes \pi_\mu).
\end{equation}
The sign characters induce  characters of some subquotients of real general linear groups (such as $\GO(n)/\GO(n)^\circ$). We still use $\sgn$ to denote these induced characters. Then as a representation of $\GO(n)/\GO(n)^\circ$ \cite[Equation (3.2)]{Mah},
\begin{equation}\label{ohpi}
  \oH(\pi_\mu)\cong \left\{
                      \begin{array}{ll}
                        \sgn^0\oplus \sgn, & \hbox{if $n$ is even;} \\
                        \sgn^{\pi_\mu(-1)+\mu_1+\mu_2+\cdots +\mu_n}, & \hbox{if $n$ is odd,}
                      \end{array}
                    \right.
\end{equation}
where $\pi_\mu(-1)$ denotes the scalar by which $-1\in \GL_n(\R)$ acts on the representation $\pi_\mu$.

We also fix a decreasing sequence
\be\label{intrnu}
  \nu=(\nu_1\geq \nu_2\geq \cdots \geq \nu_{n-1})\in \Z^{n-1}.
\ee
Define $F_\nu$ and $\Omega(\nu)$ similarly. Assume that $\nu$ is pure, and let $\pi_\nu\in \Omega(\nu)$. Similar to $\oH(\pi_\mu)$, we have a cohomology space $\oH(\pi_\nu)$, which is a representation of $\GO(n-1)/\GO(n-1)^\circ$ of dimension $1$ or $2$.

Recall that an element of  $\frac{1}{2}+\Z$ is called a critical place for $\pi_\mu\times \pi_\nu$ if it is not a pole of the local L-function $\oL(s, \pi_\mu \times \pi_\nu)$ or $\oL(1-s, \pi_\mu^\vee \times \pi_\nu^\vee)$. Assume that $\mu$ and $\nu$ are compatible in the sense that there is an integer $j$ such that
\begin{equation}\label{homf}
  \Hom_{H_\C}(F_\xi^\vee, {\det}^j) \neq 0,
\end{equation}
where
\begin{equation}\label{f}
 F_\xi:=F_\mu \otimes F_\nu,
\end{equation}
 and
\[
  H_\C:=\GL_{n-1}(\C)
\]
is viewed as a subgroup of
\[
  G_\C:=\GL_{n}(\C)\times\GL_{n-1}(\C)
\]
 via the embedding
\begin{equation}\label{emh}
  g\mapsto \left(\left[
                      \begin{array}{cc}
                        g & 0 \\
                        0 & 1 \\
                      \end{array}
                    \right], g \right).
\end{equation}
It is proved in \cite[Theorem 2.3]{KS} that  $\frac{1}{2}+j$ is a critical place for $\pi_\mu\times \pi_\nu$, and conversely, all critical places are of this form (under the assumption that $\mu$ and $\nu$ are compatible). Fix a nonzero element $\phi_F$ of the hom space \eqref{homf}, which is unique up to scalar multiplication.

Write
\begin{equation}\label{ktilde}
   G:=\GL_n(\R)\times \GL_{n-1}(\R)\quad\textrm{and}\quad  \tilde K:=\GO(n)\times \GO(n-1)\subset G,
\end{equation}
and write their respective subgroups
\[
  H:=G\cap H_\C=\GL_{n-1}(\R) \quad \textrm{and}\quad C:=H\cap \tilde K=\oO(n-1).
\]

Since  $\frac{1}{2}+j$ is a critical place for $\pi_\mu\times \pi_\nu$, the Rankin-Selberg integrals (see \cite[Section 2]{Jac}) for $\pi_\mu\times \pi_\nu$ are holomorphic at $\frac{1}{2}+j$, and produce a nonzero element
\begin{equation}\label{homi}
  \phi_\pi\in \Hom_H(\pi_\xi, \abs{\det}^{-j}),
\end{equation}
where
\[
  \pi_\xi:=\pi_\mu\widehat \otimes \pi_\nu \quad \qquad (\textrm{the completed projective tensor product})
\]
is a Casselman-Wallach representation of $G$.

As usual, we use the corresponding lower case gothic letter to indicate the complexified Lie algebra of a Lie group. We formulate the nonvanishing hypothesis at the real place as follows.
\begin{introtheorem}\label{main}
By restriction of cohomology, the $H$-equivariant linear functional
\[
 \phi_F\otimes \phi_\pi:  F_\xi^\vee\otimes \pi_\xi \rightarrow \sgn^j={\det}^{j}\otimes \abs{\det}^{-j}
\]
induces a linear map
\begin{equation}\label{homh}
   \oH^{b_n+b_{n-1}}(\g, \tilde K^\circ; F_\xi^\vee\otimes \pi_\xi)\rightarrow
   \oH^{b_n+b_{n-1}}(\h, C^\circ; {\sgn}^j)
\end{equation}
which is nonzero.

\end{introtheorem}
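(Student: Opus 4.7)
The plan is to work at the level of relative Lie algebra cochains. By the K\"unneth formula, the source of \eqref{homh} factors as $\oH(\pi_\mu)\otimes\oH(\pi_\nu)$. For the target, the numerical identity
\[
  b_n+b_{n-1}=\binom{n}{2}=\dim_\C(\h/\c)
\]
places us in the top degree of the complex computing $\oH^*(\h,C^\circ;\sgn^j)$. Since $C^\circ=\SO(n-1)$ acts trivially on $\wedge^{\mathrm{top}}(\h/\c)\cong\wedge^{\mathrm{top}}\Sym_{n-1}(\C)$ (it preserves the trace form and is connected) and $\sgn^j|_{C^\circ}$ is trivial, the target is one-dimensional. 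The map \eqref{homh} is realized on cochains by restricting along the inclusion $\wedge^{b_n+b_{n-1}}(\h/\c)\hookrightarrow\wedge^{b_n+b_{n-1}}(\g/\tilde\k)$ and then applying $\phi_F\otimes\phi_\pi$. The plan is to exhibit a closed cochain $\omega$ whose cohomology class is nonzero in the source and whose image under this composition is nonzero.

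A natural test cochain comes from Vogan-Zuckerman theory: realize $\pi_\mu$ (resp.~$\pi_\nu$) as an $A_{\q}(\lambda)$-module and take the explicit generator $\omega_\mu$ (resp.~$\omega_\nu$) of the top-degree cohomology, which is determined by the minimal $\tilde K^\circ$-type. The K\"unneth exterior product $\omega_\mu\wedge\omega_\nu$ represents a nonzero class in the source of \eqref{homh}. Computing its image reduces, after pairing with the branching functional $\phi_F\in\Hom_{H_\C}(F_\xi^\vee,\det^j)$, to evaluating the Rankin-Selberg functional $\phi_\pi$ on a canonically determined $K$-finite vector in $\pi_\xi$. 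Both $\phi_F$ and $\phi_\pi$ are independently nonzero: the former by the classical multiplicity-one branching law for $\GL_n\times\GL_{n-1}\downarrow\GL_{n-1}$, the latter by holomorphy of the Rankin-Selberg zeta integral at the critical place $\tfrac{1}{2}+j$.

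The hard part is that the vector on which $\phi_\pi$ must be evaluated cannot be chosen freely --- it is rigidly prescribed by the cohomological construction. My approach would be to use the functional equation of the Rankin-Selberg zeta integral to translate the computation from the critical place $\tfrac{1}{2}+j$ into a domain of absolute convergence, where the integral unfolds into an iterated Whittaker integral. In parallel, one embeds the Vogan-Zuckerman cocycle vector into the Whittaker model through a Casselman-style embedding of $\pi_\mu\widehat{\otimes}\pi_\nu$ into a principal series, so that the $\tilde K^\circ$-type carrying the cohomology class can be tracked explicitly. A degenerate base case --- for example trivial coefficients, or the smallest nontrivial $n$ --- should reduce to a beta-function evaluation that is visibly nonzero, and the general case would then follow by propagating this nonvanishing across the parabolic induction. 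The delicate step is to verify that the specific $K$-type supporting the Vogan-Zuckerman class matches the class of vectors amenable to the convergent-regime computation; that matching, together with the tracking of the twist by $|\det|^{-j}$ through the intertwining operators, is where the bulk of the technical work is expected to lie.
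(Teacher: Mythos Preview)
Your setup matches the paper's: both identify the cohomology as a $\Hom$-space of $K^\circ$-equivariant maps from $\wedge^{b_n+b_{n-1}}(\g/\tilde\k)$, build the test cochain from the minimal $K^\circ$-type (bottom layer), and correctly isolate the hard step as showing that the Rankin--Selberg functional $\phi_\pi$ does not vanish on the specific $K^\circ$-type $\tau_\xi^+\subset\pi_\xi$ prescribed by the cohomological construction (this is the paper's Proposition~4.1). Where you diverge is in how to attack that nonvanishing.

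Your plan --- functional equation, Whittaker unfolding, Casselman embedding, explicit integral evaluation in a convergent region, propagation via parabolic induction --- is essentially the direct route that was carried out for $n\leq 3$ (Mazur, Kasten--Schmidt). The paper bypasses all of this. Its key idea is a \emph{translation} argument: one first treats the ``self-dual'' parameter $\tilde\xi$ (with $F_{\tilde\xi}$ the Cartan product of $F_\xi$ and $F_\xi^\vee$), so that $W_{\tilde\xi}^\infty$ is tempered with trivial central character. In that case $\phi_\pi$ is realized by an absolutely convergent matrix-coefficient integral $\int_H\langle h.u,v\rangle_{\tilde\xi}\,\sgn(h)^j\,dh$, and a \emph{positivity} result for matrix coefficients on the minimal $K$-type (the author's earlier \cite{Sun1}) shows this integral is strictly positive on a suitable $C$-equivariant vector --- no Whittaker models, no gamma factors, no beta-function computations. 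The general $\xi$ is then deduced by embedding $W_\xi^\infty$ into $F_\xi\otimes W_{\tilde\xi}^\infty$ via translation functors and invoking a purely finite-dimensional lemma (Proposition~2.10) about PRV components and the classical $\SO(n)\times\SO(n-1)\downarrow\SO(n-1)$ branching.

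So your plan is not wrong in spirit, but the ``bulk of the technical work'' you anticipate is exactly what the paper manages to avoid. The two ingredients you are missing --- the translation-functor reduction to a tempered self-dual case, and the matrix-coefficient positivity that handles that case without computation --- together replace your entire analytic program with a short algebraic argument plus a sign.
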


Note that
\[
\dim (\h/\c)=\frac{n(n-1)}{2}=b_n+b_{n-1}.
\]
Therefore, Poincar\'{e} duality (see \cite[Proposition 7.6]{BN}) implies that the space of the right hand side of \eqref{homh} is one dimensional. As in \eqref{acth}, it carries a representation of $C/C^\circ$. This representation is isomorphic to $\sgn^{n+j}$.

By K\"{u}nneth formula, the left hand side of \eqref{homh} is canonically isomorphic to
\[
  \oH(\pi_\mu)\otimes \oH(\pi_\nu).
\]
By \eqref{ohpi} and its analog for $\oH(\pi_\nu)$, the above space is isomorphic to
\[
 \sgn^0\oplus \sgn=\sgn^{n+j}\oplus \sgn^{n+j+1},
\]
as a representation of
\[
  C/C^\circ\subset \tilde K/\tilde K^\circ=(\GO(n)/\GO(n)^\circ)\times (\GO(n-1)/\GO(n-1)^\circ).
\]
Since the linear map \eqref{homh} is $C/C^\circ$-equivariant, it has to vanish on

\[
  \sgn^{n+j+1}\subset  \oH(\pi_\mu)\otimes \oH(\pi_\nu).
\]
Thus, Theorem \ref{main} amounts to saying that the linear functional \eqref{homh} does not vanish on the one dimensional space
\[
  \sgn^{n+j}\subset  \oH(\pi_\mu)\otimes \oH(\pi_\nu).
\]

The nonvanishing hypothesis is vital to the arithmetic study  of critical values of higher degree L-functions and to the constructions of higher degree p-adic L-functions, via the Rankin-Selberg method and modular symbols. As is emphasized by Raghuram-Shahidi \cite{RS1}, ``\emph{it is an important technical problem to be able to prove
this nonvanishing hypothesis}". When $n=2$, Theorem \ref{main} is due to Hecke \cite{He1, He2, He3}. The higher rank case of Theorem \ref{main} has been expected by Kazhdan and Mazur since 1970's. For $n=3$, Theorem \ref{main} is proved by Mazur \cite[Theorem 3.8]{Sch} when both $F_\mu$ and $F_\nu$ are trivial representations, and by Kasten-Schmidt \cite[Theorem B]{KS} in general. In the literature, many theorems on special values of L-functions have been proved
under the assumption that Theorem \ref{main} is valid. See Kazhdan-Mazur-Schmidt \cite{KMS}, Mahnkopf
\cite{Mah}, Raghuram \cite{Rag}, Kasten-Schmidt \cite{KS}, Januszewski \cite{Jan1}, Raghuram-Shahidi \cite{RS1, RS}, and
Schmidt \cite{Sch, Sch2}. For more recent works using Theorem \ref{main} (and Theorem \ref{main2} of Section \ref{complexg}), see  Grobner-Harris \cite{GH}, Raghuram \cite{Rag2}, and Januszewski \cite{Jan2, Jan3, Jan4}.

Let us briefly explain the idea of the proof of Theorem \ref{main}.
Write $K:=\oO(n)\times \oO(n-1)$. Recall from \cite[Theorem 4.9]{V2} that every irreducible Casselman-Wallach representation of $G$ has a unique minimal $K$-type, and the minimal $K$-type occurs with multiplicity one in the representation. Denote by $\sigma_\xi$ the unique minimal $K$-type of $\pi_\xi$, and view it as a subspace of $\pi_\xi$.  Besides some ``minor problems" in classical invariant theory, the main problem of the proof of Theorem \ref{main} is to show that the nonzero functional $\phi_\pi$ in the space
\be\label{homhh}
 \Hom_H(\pi_\xi, \abs{\det}^{-j})
 \ee
  does not vanish on the minimal $K$-type $\sigma_\xi\subset \pi_\xi$. The first key ingredient of the proof is the multiplicity one theorem as proved in \cite[Theorem B]{AzG} and \cite[Theorem B]{SZ}, which implies that the space \eqref{homhh} is one dimensional. Thus it suffices to produce an element of the space \eqref{homhh} which does not vanish on the minimal $K$-type $\sigma_\xi$.

Ignoring  the convergence problem, there is another way to produce elements of \eqref{homhh} besides using the Rankin-Selberg integrals, that is, by using the matrix coefficient integrals
\be\label{maint0}
\begin{array}{rcl}
\la \,\, ,\,\ra_{\mathrm{mc}} \,:\, \pi_\xi \times \pi_\xi &\rightarrow &\C,\\
    (u, v)&\mapsto & \int_H \la h.u, v\ra_\xi\,\abs{\det(h)}^j\,\mathrm{d}h,
  \end{array}
\ee
where $\mathrm d h$ is a Haar measure on $H$, and $\la\,\,,\,\ra_\xi$ denotes an $\SL_n^{\pm}(\R)\times \SL_{n-1}^\pm(\R)$-invariant continuous inner product on $\pi_\xi$.
The second key ingredient of the proof is then the following positivity result, which is proved by the author \cite[Theorem 1.5]{Sun1} in a more general setting.

\begin{introproposition}\label{positiveintr} For every nonzero vector $u$ in the minimal $K$-type $\sigma_\xi$ of $\pi_\xi$, the inequality
\[
  \la g.u, u\ra_\xi>0
\]
holds for all $g\in G=\GL_n(\R)\times \GL_{n-1}(\R)$ which is a pair of positive definite matrices.
\end{introproposition}

\begin{example}
 Let us explicate  Proposition \ref{positiveintr} in the simple case when $n=2$ and $F_\mu$ is the trivial representation. In this case,  $\pi_\mu$ is the relative discrete series of $\GL_2(\R)$ of weight $2$. The  fractional linear transformation yields a representation of $\GL_2(\R)$ on the Hilbert space of all holomorphic 1-forms $\omega$ on $\C\setminus \R$ such that
\[
  \int_{\C\setminus \R} \mathbf i\, \omega\wedge \bar \omega <\infty \qquad (\mathbf i=\sqrt{-1}\in \C).
\]
This is an irreducible unitary representation, and $\pi_\mu$ is realized as its space of smooth vectors. Let $\omega_+$ denote the holomorphic 1-form on $\C\setminus \R$ which vanishes on the lower half plane, and whose restriction to the upper half plane equals
\[
   \od \left(  \frac{z-\mathbf i}{-\mathbf iz+1}\right)\qquad(z\textrm{ denotes the standard coordinate function on $\C$}).
\]
Likewise, let
$\omega_-$ denote the holomorphic 1-form on $\C\setminus \R$ which vanishes on the upper half plane, and whose restriction to the lower half plane equals
\[
   \od \left(  \frac{-z-\mathbf i}{\mathbf iz+1}\right).
\]
Then $\omega_+$ and $\omega_-$ span the minimal $\oO(2)$-type of $\pi_\mu$, and Proposition \ref{positiveintr} is equivalent to saying that
\be\label{gl2p}
   \int_{\C\setminus \R} \mathbf i\, (g. \omega_+)\wedge \overline \omega_+>0\qquad\textrm{and}\qquad \int_{\C\setminus \R} \mathbf i\, (g. \omega_-)\wedge \overline \omega_->0,
\ee
for all $g\in \GL_2(\R)$ which is a positive definite matrix. Although not obvious, the two inequalities of  \eqref{gl2p} may be verified by hand.
The reader is referred to  \cite[Chapter I, \S 6]{K} for more details concerning this example. See \cite{Fl} for more information on matrix coefficients of discrete series representations.
\end{example}

Returning  to the general case, invariant theory shows that there is a nonzero $C$-invariant vector $u_\xi\in \sigma_\xi$, which is  unique up to scalar multiplication. Using the Cartan decomposition, Proposition \ref{positiveintr} implies that $\la u_\xi, u_\xi\ra_{\mathrm{mc}}>0$. Thus, assuming that the map \eqref{maint0} is well-defined and continuous,  we get an element of the space \eqref{homhh}, namely $\la \,\cdot\,, u_\xi\ra_{\mathrm{mc}}$, which does not vanish on the minimal $K$-type $\sigma_\xi$.

In the special case when $\pi_\xi$ is unitarizable and $j=0$, the integrals in \eqref{maint0} do converge, and $\la\,\,,\,\ra_{\mathrm{mc}}$ is a continuous Hermitian form on $\pi_\xi$ (\cf Lemma \ref{ematrix}). Therefore the above argument does work in this special case. In order to overcome the convergence problem in the general case, we introduce an auxiliary unitarizable representation
$\pi_{\tilde \xi}:=\pi_{\tilde \mu}\widehat \otimes \pi_{\tilde \nu}$
of $G$ such that  $\pi_\xi$  is uniquely realized as a subrepresentation of $F_\xi\otimes \pi_{\tilde \xi}$, where
\[
   \tilde \mu:=(\mu_1-\mu_n, \mu_2-\mu_{n-1},\cdots, \mu_n-\mu_1),\qquad  \pi_{\tilde \mu}\in \Omega(\tilde \mu),
\]
and similarly for $\tilde \nu$ and $\pi_{\tilde \nu}$.
The minimal $K$-type $\sigma_\xi$ of $\pi_\xi$ is explicitly described as a subspace of $F_\xi\otimes \sigma_{\tilde \xi}$, where $\sigma_{\tilde \xi}$ denotes the minimal $K$-type of $\pi_{\tilde \xi}$. The above argument shows that there is an element $\phi_\pi'\in \Hom_H(\pi_{\tilde \xi}, \sgn^{-j})$ which does not vanish on $\sigma_{\tilde \xi}$. Take a nonzero element $\phi_F'\in \Hom_{H_\C}(F_\xi, {\det}^{-j})$. Then some invariant theoretic considerations show that $\phi_F'\otimes \phi_\pi'$ restricts to an element of $\Hom_H(\pi_\xi, \abs{\det}^{-j})$ which does not vanish on the minimal $K$-type $\sigma_\xi$.

We now comment on the organization of this paper. Section \ref{s2} is devoted to some preliminary results on classical invariant theory. In section \ref{cohom}, we realize the representation $\pi_\mu$ by cohomological induction, and identify its minimal $\oO(n)$-type inside the tensor product $F_\mu\otimes \pi_{\tilde \mu}$, for some auxiliary representation $\pi_{\tilde \mu}\in \Omega(\tilde \mu)$. The nonvanishing of $\phi_\pi$ on the minimal $K$-type is then proved in Section \ref{ctest}. With these preparations, Theorem \ref{main} is finally proved in Section \ref{pmanin}. In Section \ref{complexg}, the analog of Theorem \ref{main} for complex groups is obtained with a sketched proof.

\vskip 5pt

\section{Finite dimensional representations}\label{s2}

\subsection{Some root systems}\label{str}
For every integer $k\geq 1$, we inductively define a Lie algebra embedding
\begin{equation}\label{vtk}
\gamma_k: \C^k\rightarrow
\g_k:=\gl_k(\C)
\end{equation}
as follows: $\gamma_1$ is the identity map,
$\gamma_2$ is given by
\[
  (a_1,a_2)\mapsto
  \left[
  \begin{array}{cc}
    \frac{a_1+a_{2}}{2} & \frac{a_1-a_{2}}{2\mathbf i}  \\
     \frac{a_{2}-a_1}{2\mathbf i} & \frac{a_{2}+a_1}{2}  \\
                  \end{array}
\right],  \qquad(\mathbf i=\sqrt{-1}\in \C)
\]
and if $k\geq 3$, $\gamma_k$ is given by
\[
   (a_1,a_2,\cdots, a_k)\mapsto
  \left[
  \begin{array}{cc}
    \gamma_{k-2}(a_1,a_2,\cdots, a_{k-2}) & 0  \\
     0& \gamma_{2}(a_{k-1}, a_k)  \\
  \end{array}
\right].
\]

Denote by $\t_k$ the image of $\gamma_k$. It is a fundamental Cartan subalgebra for the group $\GL_k(\R)$ in the following sense:
\[
\left\{
  \begin{array}{l}
    \bar{\t_k}=\t_k;\smallskip \\
    \theta_k (\t_k)=\t_k; \smallskip \\
    \t_k^\mathrm c:=\t_k\cap \o_k\textrm{ is a Cartan subalgebra of the orthogonal Lie algebra $\o_k:=\o_k(\C)$}.
  \end{array}
\right.
\]
Here and henceforth, an overbar indicates the complex conjugation in various contexts; and
\[
\theta_k :\g_k\rightarrow \g_k,\qquad x\mapsto -x^{\mathrm t}
\]
denotes the Caran involution corresponding to $\oO(k)$.

Identify $\t_k$ with $\C^k$ via $\gamma_k$. Then its dual space $\t_k^*$ is also identified with $\C^k$. The root system of $\g_k$ with respect to $\t_k$ is
\begin{equation}\label{phik}
  \Delta_k:=\{\pm (\mathrm e^r_k-{\mathrm e}_k^s)\mid 1\leq r<s\leq k\}
\end{equation}
where $\mathrm e_k^1$, $\mathrm e_k^2$,$\cdots$, $\mathrm e^k_k$  denote the standard basis of $\C^k=\t_k^*$.

For every $\lambda\in \t_k^*$, write $[\lambda]\in {\t_k^{\mathrm c}}^*$ for its restriction to $\t_k^{\mathrm c}$.
Put
\[
  \Lambda_k:=\left \{r\in \{2,3,\cdots,k\}\mid r\equiv k \mod 2\right \}.
\]
Then
\[
\left\{
  \begin{array}{l}
   \,\! [{\mathrm e}^{r-1}_k]+[{\mathrm e}^r_k]=0, \quad \textrm{ for all $r\in \Lambda_k$;}\smallskip \\
   \, \! [{\mathrm e}_k^1]=0, \quad \textrm{if $k$ is odd.}
  \end{array}
\right.
\]
Therefore, the root system of $\g_k$ with respect to $\t_k^{\mathrm c}$ is
\[
 [{\Delta}_k]=\left\{
          \begin{array}{ll}
            \{\pm [{\mathrm e}_k^r]\pm [{\mathrm e}_k^s]\mid r<s\}\cup \{\pm 2 [{\mathrm e}_k^t]\}, & \hbox{if $k$ is even;} \smallskip\\
            \{\pm [{\mathrm e}_k^r]\pm [{\mathrm e}_k^s]\mid r<s\}\cup \{\pm [{\mathrm e}_k^t], \,\pm 2 [{\mathrm e}_k^t]\} , & \hbox{if $k$ is odd,}
          \end{array}
        \right.
\]
where $r,s,t$ run through all elements in $\Lambda_k$.

 Note that the Weyl group
\[
   \frac{\textrm{the normalizer of $\t_k^{\mathrm c}$ in $\oO(k)$}}{\textrm{the centralizer of $\t_k^{\mathrm c}$ in $\oO(k)$}}
\]
acts simply transitively on the set of all positive systems in $[{\Delta}_k]$. Fix such a positive system:
\begin{equation}\label{pso0}
 [{\Delta}_k]^+:=\left\{
          \begin{array}{ll}
            \{\pm [{\mathrm e}_k^r]+[{\mathrm e}_k^s]\mid r<s\}\cup \{2 [{\mathrm e}_k^t]\}, & \hbox{if $k$ is even;} \smallskip\\
            \{\pm [{\mathrm e}_k^r]+[{\mathrm e}_k^s]\mid r<s\}\cup \{[{\mathrm e}_k^t],\,2 [{\mathrm e}_k^t]\}, & \hbox{if $k$ is odd,}
          \end{array}
        \right.
\end{equation}
where  $r,s,t $ run through all elements in $\Lambda_k$. Denote by $\b_k$ the corresponding Borel subalgebra of $\g_k$. It is ``theta stable" in the sense that
\[
   \theta_k(\b_k)=\b_k\quad\textrm{and}\quad \b_k\cap \overline{\b_k}=\t_k.
\]
Put $\b_k^{\mathrm c}:=\b_k\cap \o_k$, which is a Borel subalgebra of $\o_k$. Denote by $\n_k$ and $\n_k^{\mathrm c}$ the nilpotent radicals of $\b_k\cap [\g_k,\g_k]$ and $\b_k^{\mathrm c}\cap [\o_k,\o_k]$, respectively.

Given an element  $\lambda\in \t_k^*$ which is real, namely $\lambda\in \R^k\subset \C^k=\t_k^*$, put
\begin{equation}\label{dlambda}
  \abs{\lambda}:=\textrm{the unique $\b_k$-dominant element in the $\oW_{\g_k}$-orbit of $\lambda$,}
\end{equation}
where $\oW_{\g_k}$ denotes the Weyl group of $\g_k$ with respect to the Cartan subalgebra $\t_k$.

\subsection{Multiplicity one for classical branching rules}
Recall that $n\geq 2$, and the group $G=\GL_n(\R)\times \GL_{n-1}(\R)$ has a maximal compact subgroup
$ K=\oO(n)\times \oO(n-1)$.
Their complexified Lie algebras
\[
  \g=\g_n\times \g_{n-1}   \quad \textrm{and}\quad  \k=\o_n\times \o_{n-1}
\]
have respective Borel subalgebras
\[
  \b:=\b_n\times \b_{n-1}=\t\ltimes \n \quad \textrm{and}\quad  \b^\mathrm c:=\b_n^\mathrm c\times \b^\mathrm c_{n-1}=\t^{\mathrm c}\ltimes \n^\mathrm c,
\]
where
\[
 \t:=\t_n\times \t_{n-1},\,\, \n:=\n_n\times \n_{n-1}\quad \textrm{and}\quad \t^\mathrm c:=\t^\mathrm c_n\times \t^\mathrm c_{n-1},\,\,
  \n^\mathrm c:=\n^\mathrm c_n\times \n^\mathrm c_{n-1}.
\]
As in the Introduction, $\h=\g_{n-1}$ is diagonally embedded in $\g$, and $\c=\o_{n-1}$ is diagonally embedded in $\k$.

\begin{lem}\label{decg}
There are vector space decompositions
\begin{equation}\label{decg1}
  \g=\h\oplus \b\quad \textrm{and}\quad \k=\c\oplus \b^\mathrm c.
\end{equation}
\end{lem}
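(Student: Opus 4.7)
The plan.  I start with a dimension count: $\dim\b_n=n(n+1)/2$ and $\dim\b_{n-1}=(n-1)n/2$ sum to $\dim\b=n^2$, while $\dim\h=(n-1)^2$ and $\dim\g=n^2+(n-1)^2$, so $\dim\h+\dim\b=\dim\g$.  Hence the first equation of \eqref{decg1} reduces to showing $\h\cap\b=0$.  The linear map $\g=\g_n\times\g_{n-1}\to\g_n$, $(X,Y)\mapsto X-\diag(Y,0)$, has kernel exactly $\h$, so this vanishing is equivalent to
\[
  \b_n+\diag(\b_{n-1},0)=\g_n\quad\text{inside }\g_n,
\]
and by the same dimension count to $\b_n\cap\diag(\b_{n-1},0)=0$.

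To establish that intersection is trivial, view $\b_n$ as the stabilizer of a complete flag $F_\bullet$ of $\C^n$ whose $k$-th stage is spanned by the first $k$ of the $\t_n$-weight vectors in the order dictated by $[\Delta_n]^+$.  The recursive structure of $\gamma_n$ together with \eqref{pso0} forces $F_1=\mathrm{span}(v_{n-1}+\mathbf i v_n)$.  Now suppose $X=\diag(Y,0)\in\b_n$ with $Y\in\b_{n-1}$: then $Xv_n=0$ and $X$ stabilizes $V':=\mathrm{span}(v_1,\ldots,v_{n-1})$, acting there as $Y$.  Reading off $v_n$-components of $XF_k\subset F_k$ splits this preservation into two pieces: (i) $Yv_{n-1}=0$ (from $F_1$), and (ii) $Y$ preserves the induced complete flag $\wt H_\bullet$ on $V'$ defined by $\wt H_{k-1}:=F_k\cap V'$; the completeness of $\wt H_\bullet$ is a direct check, since the remaining weight vectors $w_{\sigma(j)}$ for $j\in\{2,\ldots,n-1\}$ all lie in $V'$.

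Combined with $Y\in\b_{n-1}$, the operator $Y$ preserves two complete flags on $V'$: the induced flag $\wt H_\bullet$ and the flag $F'_\bullet$ of $\b_{n-1}$.  The main technical assertion---and the principal obstacle---is that $\wt H_\bullet$ and $F'_\bullet$ are transverse in $V'$.  This is a structural reflection of the mismatch between $\gamma_n$ and $\gamma_{n-1}$: one begins with a $\gamma_1$-block and the other does not, so their $\gamma_2$-blocks are offset by one position, and the two resulting systems of $\t$-weight vectors interlace.  Granted transversality, the Borels $\b_{\wt H}$ and $\b_{n-1}$ of $\gl(V')$ are opposite, $\b_{\wt H}\cap\b_{n-1}$ is a Cartan subalgebra $\t'$, and $Y\in\t'$ is semisimple, simultaneously diagonal on the eigenlines $L_k:=\wt H_k\cap F'_{n-k}$ for $k=1,\ldots,n-1$.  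A short explicit expansion---read off the interlacing structure---shows that $v_{n-1}$ has nonzero coefficient on every $L_k$; hence (i) forces each eigenvalue of $Y$ to vanish, so $Y=0$ as required.

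The second equation follows formally.  The Cartan involution $\theta:=(\theta_n,\theta_{n-1})$ stabilizes $\h$, because
\[
  \theta_n(\diag(Y,0))=\diag(-Y^{\mathrm t},0)=\diag(\theta_{n-1}(Y),0),
\]
and stabilizes $\b=\b_n\times\b_{n-1}$ by the theta-stability $\theta_k(\b_k)=\b_k$ recalled in Section \ref{str}.  Taking $\theta$-fixed vectors in $\g=\h\oplus\b$ therefore yields $\k=\g^{\theta}=\h^{\theta}\oplus\b^{\theta}=\c\oplus\b^{\mathrm c}$.
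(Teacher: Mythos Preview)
Your overall plan matches the paper's: reduce the first equality to $\h\cap\b=0$ by a dimension count, then deduce the second from the first via $\theta$-stability of $\h$ and $\b$. The second half of your argument is correct and is exactly what the paper does.

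For the first equality the paper simply says ``by dimension counting and by writing down $\b_n$ and $\b_{n-1}$ explicitly,'' leaving the verification as a direct matrix calculation. Your route through flag geometry is a legitimate alternative, but as written it has two genuine gaps. First, the transversality of the induced flag $\wt H_\bullet$ and the $\b_{n-1}$-flag $F'_\bullet$ on $V'$ is only heuristically justified (``the two resulting systems of $\t$-weight vectors interlace''); this is the crux of the argument and needs a proof, not a slogan. Second, the claim that $v_{n-1}$ has nonzero coefficient on every eigenline $L_k$ is asserted (``a short explicit expansion\ldots shows'') but not carried out. Both claims are in fact true---one checks them readily for small $n$, and there is a recursive structure worth exploiting: for $j\leq n-2$ your $\wt H_j$ is exactly the $j$-th stage of the $\b_{n-2}$-flag on $\C^{n-2}\subset\C^{n-1}$, which suggests an induction on $n$. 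But establishing them in general requires work comparable to the direct computation the paper invokes, so you have not really avoided that labor, only relocated it. A minor point: the highest $\t_n$-weight line is $F_1=\operatorname{span}(v_{n-1}-\mathbf i\, v_n)$ rather than $v_{n-1}+\mathbf i\, v_n$; the sign does not affect your argument.
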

\begin{proof}
The first equality is proved by dimension counting and by writing down $\b_n$ and $\b_{n-1}$ explicitly. We omit the details. The second one is a consequence of the first one since both $\h$ and $\b$ are stable under the Cartan involution
\[
 \theta_n\times \theta_{n-1}: \g\rightarrow \g,\quad (x,y)\mapsto (-x^{\mathrm t}, -y^{\mathrm t}).
\]

\end{proof}

As usual, a superscript group (or Lie algebra) indicates the space of invariants of a group (or Lie algebra) representation; and ``$\mathcal{U}$" indicates the universal enveloping algebra of a complex Lie algebra.

Lemma \ref{decg} implies the following classical multiplicity one theorem.
\begin{lem}\label{mulone}
Let $F$ be an irreducible finite dimensional $\g$-module, and let $\chi$ be a one dimensional $\h$-module. Then every nonzero element $\phi\in \Hom_{\h}(F,\chi)$ does not vanish on $F^{\bar \n}$. Consequently,
 \[
  \dim \Hom_{\h}(F,\chi)\leq 1.
\]
\end{lem}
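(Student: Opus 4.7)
The plan is to use Lemma \ref{decg} to realize $F$ as a cyclic $\mathcal{U}(\h)$-module generated by its $\bar\n$-invariant line; once that is in hand, both conclusions drop out from the $\h$-equivariance of $\phi$ combined with the one-dimensionality of $\chi$. The key structural observation is that $\h$, being the complexification of the real Lie algebra of $H = \GL_{n-1}(\R)$, is stable under complex conjugation on $\g$. Conjugating the decomposition $\g = \h \oplus \b$ of Lemma \ref{decg} therefore yields also $\g = \h \oplus \bar\b$.

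By Poincar\'e--Birkhoff--Witt applied to this second decomposition, multiplication gives a vector space isomorphism $\mathcal{U}(\h)\otimes \mathcal{U}(\bar\b)\cong \mathcal{U}(\g)$, so $\mathcal{U}(\g) = \mathcal{U}(\h)\cdot \mathcal{U}(\bar\b)$. Next I need to understand $F^{\bar\n}$. The theta-stability condition $\b\cap\bar\b = \t$ recorded in Section \ref{str} forces the positive root systems cut out by $\b$ and $\bar\b$ to be disjoint, hence opposite, so $\bar\b$ is the Borel opposite to $\b$. A nonzero vector $v_-\in F^{\bar\n}$ is then a lowest weight vector for $(\g,\b)$, unique up to scalar, and being a $\t$-eigenvector annihilated by $\bar\n$ it satisfies $\mathcal{U}(\bar\b)v_- = \C v_-$. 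Together with the irreducibility of $F$, this gives
\[
F = \mathcal{U}(\g)v_- = \mathcal{U}(\h)\cdot \mathcal{U}(\bar\b)v_- = \mathcal{U}(\h)v_-.
\]

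Now suppose $\phi \in \Hom_\h(F,\chi)$ is nonzero and, writing $\chi$ also for the induced algebra homomorphism $\mathcal{U}(\h)\to \C$, assume for contradiction that $\phi(v_-) = 0$. Then for every $X\in \mathcal{U}(\h)$, $\h$-equivariance gives $\phi(Xv_-) = \chi(X)\phi(v_-) = 0$, and cyclicity forces $\phi = 0$. Hence $\phi|_{F^{\bar\n}} \neq 0$, which is the first claim; the bound $\dim \Hom_\h(F,\chi)\leq 1$ then follows because evaluation at $v_-$ is an injection $\Hom_\h(F,\chi)\hookrightarrow \chi$ into a one-dimensional space. The one step needing genuine verification rather than formal manipulation is the identification of $\bar\b$ as the opposite Borel; this is where the explicit choices made in Section \ref{str} enter, and I expect it to be the main (though modest) obstacle in rendering the argument precise.
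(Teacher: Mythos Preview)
Your proof is correct and follows essentially the same route as the paper: derive $\g=\h\oplus\bar\b$ from Lemma~\ref{decg} by complex conjugation, use PBW to write $F=\mathcal U(\h)\cdot F^{\bar\n}$, and conclude both assertions from $\dim F^{\bar\n}=1$. Your worry about identifying $\bar\b$ as the opposite Borel is slightly overstated---all that is actually needed is that $\bar\b=\t\ltimes\bar\n$ stabilizes $F^{\bar\n}$ (immediate since $\t$ normalizes $\bar\n$) and that $\dim F^{\bar\n}=1$ (standard highest weight theory), both of which the paper simply asserts without further comment.
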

\begin{proof}
Lemma \ref{decg} implies that
\[
 \g=\h\oplus \bar \b.
\]
The first assertion of the Lemma then follows from the equality
\[
  F=\mathcal{U}(\g). F^{\bar \n}=(\mathcal{U}(\h)\,\mathcal{U}(\bar \b)). F^{\bar \n}=\mathcal{U}(\h). F^{\bar \n}.
\]

Since $\dim F^{\bar \n}=1$, the first assertion implies the second one.
\end{proof}

Note that $K^\circ=\SO(n)\times \SO(n-1)$, and $C^\circ=\SO(n-1)$ is diagonally embedded in $K^\circ$. The following lemma is proved as in Lemma \ref{mulone}.

\begin{lem}\label{mulone2}
Let $\tau$ be an irreducible representation of $K^\circ$. Then every nonzero $C^\circ$-invariant linear functional on $\tau$ does not vanish on $\tau^{\overline{\n^{\mathrm c}}}$. Consequently,
 \[
  \dim \tau^{C^\circ}\leq 1.
\]
\end{lem}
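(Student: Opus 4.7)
The plan is to mirror the proof of Lemma \ref{mulone} with the compact half of Lemma \ref{decg} replacing the non-compact half. Since $\c$ and $\k$ are the complexifications of the Lie algebras of the compact groups $C^\circ$ and $K^\circ$, both are stable under the complex conjugation of $\k$ with respect to its compact real form. Applying this conjugation to the decomposition $\k = \c \oplus \b^{\mathrm c}$ from Lemma \ref{decg} yields
\[
\k = \c \oplus \overline{\b^{\mathrm c}}.
\]

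First I would check that $\overline{\b^{\mathrm c}}$ preserves $\tau^{\overline{\n^{\mathrm c}}}$. Writing $\overline{\b^{\mathrm c}} = \t^{\mathrm c} \oplus \overline{\n^{\mathrm c}}$ and using that $\overline{\n^{\mathrm c}}$ is an ideal in $\overline{\b^{\mathrm c}}$, for any $v \in \tau^{\overline{\n^{\mathrm c}}}$, $X \in \overline{\b^{\mathrm c}}$ and $Y \in \overline{\n^{\mathrm c}}$ one has $YXv = [Y,X]v + XYv = 0$, so $Xv \in \tau^{\overline{\n^{\mathrm c}}}$. The space $\tau^{\overline{\n^{\mathrm c}}}$ is nonzero, being the highest weight line of the irreducible finite dimensional $\k$-module $\tau$ with respect to the Borel $\overline{\b^{\mathrm c}}$, and in particular it is one dimensional. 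Irreducibility of $\tau$, PBW and the displayed decomposition together then give
\[
\tau = \mathcal{U}(\k) \cdot \tau^{\overline{\n^{\mathrm c}}} = \mathcal{U}(\c) \cdot \mathcal{U}(\overline{\b^{\mathrm c}}) \cdot \tau^{\overline{\n^{\mathrm c}}} = \mathcal{U}(\c) \cdot \tau^{\overline{\n^{\mathrm c}}}.
\]

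Now let $\phi$ be a nonzero $C^\circ$-invariant linear functional on $\tau$. By connectedness of $C^\circ$ it is $\c$-invariant, hence vanishes on $\c \cdot \tau$, and by iteration on $\mathcal{U}_{\geq 1}(\c) \cdot \tau$. If $\phi$ also vanished on $\tau^{\overline{\n^{\mathrm c}}}$, it would vanish on $\mathcal{U}(\c) \cdot \tau^{\overline{\n^{\mathrm c}}} = \tau$, contradicting $\phi \neq 0$. This proves the first assertion. For the consequence, the restriction map $\Hom_\c(\tau,\C) \to (\tau^{\overline{\n^{\mathrm c}}})^*$ is then injective, so
\[
\dim \tau^{C^\circ} = \dim \Hom_\c(\tau,\C) \leq \dim \tau^{\overline{\n^{\mathrm c}}} = 1.
\]

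There is no serious obstacle; the only point requiring attention—and the one place this is more than a formal consequence of Lemma \ref{mulone}—is confirming that $\c$ remains fixed under the compact conjugation on $\k$, which is immediate since $\c$ is the complexification of the real Lie algebra of the compact group $C^\circ$.
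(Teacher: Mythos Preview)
Your proof is correct and follows exactly the approach the paper intends: the paper states this lemma with the words ``Similar to Lemma \ref{mulone}'' and no further proof, and you have carried out precisely that analogy, using the second decomposition $\k=\c\oplus\b^{\mathrm c}$ of Lemma \ref{decg} in place of the first and conjugating to obtain $\k=\c\oplus\overline{\b^{\mathrm c}}$. The extra details you supply (stability of $\tau^{\overline{\n^{\mathrm c}}}$ under $\overline{\b^{\mathrm c}}$, one-dimensionality of this space as the extremal weight line) are routine but accurate.
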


\subsection{The representation $F_\xi$}\label{fxi}

Let $F_\xi=F_\mu\otimes F_\nu$ and $j\in \Z$ be as in the Introduction. Whenever an irreducible representation of a compact Lie group occurs with  multiplicity one in another representation, we view the irreducible representation as a subspace of the other representation.

View $\mu$ as an element of $\t_n^*$, and denote by $\tau_{\mu}$ the irreducible representation of $\SO(n)$ of highest weight $[\abs{\mu}]\in {\t_n^\mathrm c}^*$ (see \eqref{dlambda}). Then $\tau_\mu$ occurs with multiplicity one in $F_\mu$, and  $\tau_{\mu}\supset (F_\mu)^{\n_n}$. Taking dual representations, we get the following result.
\begin{lem}\label{lweight}
The representation  $\tau_{\mu}^\vee$ occurs with multiplicity one in $F_\mu^\vee$. It contains the one dimensional space $(F_\mu^\vee)^{\overline{\n_n}}$.
\end{lem}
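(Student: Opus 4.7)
My plan is to obtain Lemma~\ref{lweight} by dualizing the preceding unnumbered statement. The multiplicity-one assertion is soft: since $\SO(n)$ is compact, $F_\mu$ is completely reducible as an $\SO(n)$-module, and passing to contragredients sends any isotypic decomposition $F_\mu\cong\bigoplus_i m_i\pi_i$ to $F_\mu^\vee\cong\bigoplus_i m_i\pi_i^\vee$, so the multiplicity of $\tau_\mu^\vee$ in $F_\mu^\vee$ matches the multiplicity of $\tau_\mu$ in $F_\mu$, which is one.

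For the containment, the plan is to exhibit a concrete nonzero vector in $(F_\mu^\vee)^{\overline{\n_n}}$ that manifestly lies in $\tau_\mu^\vee$. Fix a nonzero $v_0\in (F_\mu)^{\n_n}$, a $\t_n$-weight vector of weight $|\mu|$, and let $v_0^*\in F_\mu^\vee$ be the dual functional of weight $-|\mu|$; this spans the one-dimensional space $(F_\mu^\vee)^{\overline{\n_n}}=(F_\mu^\vee)_{-|\mu|}$. The key structural input is that theta-stability of $\b_n$ (i.e.\ $\b_n\cap\overline{\b_n}=\t_n$), together with $\overline{\o_n}=\o_n$, forces $\overline{\b_n^{\mathrm c}}:=\overline{\b_n}\cap\o_n$ to be the Borel of $\o_n$ opposite to $\b_n^{\mathrm c}$; in particular $\overline{\n_n^{\mathrm c}}\subset\overline{\n_n}$, so $v_0^*$ is annihilated by $\overline{\n_n^{\mathrm c}}$ and is a highest-weight vector for $\o_n$ with respect to $\overline{\b_n^{\mathrm c}}$, of weight $-[|\mu|]$.

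To finish I would compare highest weights: $v_0^*$ generates an irreducible $\SO(n)$-subrepresentation $\tau'\subset F_\mu^\vee$ of highest weight $-[|\mu|]$ with respect to $\overline{\b_n^{\mathrm c}}$. Letting $w_0$ denote the longest element of the Weyl group of $\o_n$ with respect to $\t_n^{\mathrm c}$, the highest weight of $\tau_\mu$ relative to $\b_n^{\mathrm c}$ is $[|\mu|]$, its lowest weight is $w_0[|\mu|]$, so the highest weight of $\tau_\mu^\vee$ relative to $\b_n^{\mathrm c}$ is $-w_0[|\mu|]$, and re-indexing to the opposite Borel $\overline{\b_n^{\mathrm c}}$ (using $w_0^2=1$) gives $-[|\mu|]$. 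Hence $\tau'\cong\tau_\mu^\vee$, and by the multiplicity-one statement the two must coincide as subrepresentations of $F_\mu^\vee$. Therefore $v_0^*\in\tau_\mu^\vee$, which proves the containment. The only delicate point I foresee is the identification of $\overline{\b_n^{\mathrm c}}$ as the Borel opposite to $\b_n^{\mathrm c}$ inside $\o_n$, but this should be routine once one observes $\overline{\o_n}=\o_n$ and combines it with theta-stability of $\b_n$.
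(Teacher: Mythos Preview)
Your proposal is correct and follows the same approach as the paper: both obtain the lemma by dualizing the preceding statement that $\tau_\mu$ occurs with multiplicity one in $F_\mu$ and contains $(F_\mu)^{\n_n}$. The paper simply says ``Taking dual representations, we get\ldots'', whereas you have spelled out the mechanics of that dualization (identifying $(F_\mu^\vee)^{\overline{\n_n}}$ with the weight-$(-|\mu|)$ line, checking it is a lowest weight vector for $\o_n$, and matching it with $\tau_\mu^\vee$ via multiplicity one); your verification that $\overline{\b_n^{\mathrm c}}$ is the Borel opposite to $\b_n^{\mathrm c}$ is indeed routine, so there is no gap.
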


Define  $\tau_\nu$ similarly. Then
\[
   \tau_\xi:=\tau_\mu\otimes \tau_\nu
\]
is an irreducible representation of $K^\circ$. Lemma \ref{lweight} (and its analog for $\nu$) and Lemma \ref{mulone} imply the following lemma.

\begin{lem}\label{nv1} The representation  $\tau_\xi^\vee$ occurs with multiplicity one in $F_\xi^\vee$.
Moreover, every nonzero element of $\Hom_{H_\C}(F_\xi^\vee, {\det}^{j})$ does not vanish on $\tau_\xi^\vee\subset F_\xi^\vee$.
\end{lem}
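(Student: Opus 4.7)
The plan is to derive both assertions directly from Lemma~\ref{lweight} (applied to $\mu$ and to $\nu$) together with Lemma~\ref{mulone} applied to the irreducible $\g$-module $F_\xi^\vee$ and the character ${\det}^{j}$ of $\h$.

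For the multiplicity-one statement, I would observe that under the action of $K^\circ = \SO(n)\times \SO(n-1)$ the representation $F_\xi^\vee = F_\mu^\vee \otimes F_\nu^\vee$ is an external tensor product of an $\SO(n)$-representation and an $\SO(n-1)$-representation. By Lemma~\ref{lweight}, $\tau_\mu^\vee$ occurs with multiplicity one in $F_\mu^\vee$, and by the analog for $\nu$, $\tau_\nu^\vee$ occurs with multiplicity one in $F_\nu^\vee$. Multiplying these $K^\circ$-type decompositions shows that $\tau_\xi^\vee = \tau_\mu^\vee \otimes \tau_\nu^\vee$ appears with multiplicity one in $F_\xi^\vee$.

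For the non-vanishing statement, the strategy is to locate a concrete one-dimensional test subspace of $\tau_\xi^\vee$ on which every $H_\C$-equivariant map to ${\det}^{j}$ is automatically nonzero; the natural candidate is the $\overline{\b}$-lowest weight line $(F_\xi^\vee)^{\overline{\n}}$. Since $H_\C=\GL_{n-1}(\C)$ is connected, $H_\C$-equivariance of $\phi$ coincides with $\h$-equivariance; and $F_\xi^\vee$ is irreducible as a $\g$-module because it is the external tensor product of the irreducible $\g_n$- and $\g_{n-1}$-modules $F_\mu^\vee$ and $F_\nu^\vee$. Hence Lemma~\ref{mulone} applies and gives that $\phi$ does not vanish on $(F_\xi^\vee)^{\overline{\n}}$. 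Decomposing $\overline{\n} = \overline{\n_n}\oplus \overline{\n_{n-1}}$ yields
\[
(F_\xi^\vee)^{\overline{\n}} = (F_\mu^\vee)^{\overline{\n_n}} \otimes (F_\nu^\vee)^{\overline{\n_{n-1}}},
\]
and Lemma~\ref{lweight} places each factor inside $\tau_\mu^\vee$, respectively $\tau_\nu^\vee$. Thus the lowest weight line lies in $\tau_\xi^\vee$, and $\phi$ is already nonzero there.

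There is no serious obstacle here: the argument is essentially a Künneth-style bookkeeping step that bundles Lemma~\ref{mulone} and Lemma~\ref{lweight} together with the external tensor product structure of $F_\xi^\vee$. The only points that warrant explicit mention are the irreducibility of $F_\xi^\vee$ over $\g$ and the identification of $\Hom_{H_\C}$ with $\Hom_{\h}$, both of which are immediate from the connectedness of the relevant groups.
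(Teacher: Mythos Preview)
Your argument is correct and matches the paper's own proof, which the paper compresses into a single sentence: ``Lemma~\ref{lweight} (and its analog for $\nu$) and Lemma~\ref{mulone} imply'' the result. You have spelled out precisely how these ingredients combine, including the minor points (irreducibility of $F_\xi^\vee$ over $\g$, and $\Hom_{H_\C}=\Hom_\h$ by connectedness) that the paper leaves implicit.
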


Denote by $\tau_{-\mu}$ the irreducible representation of $\SO(n)$ of highest weight $[\abs{-\mu}]\in {\t_n^\mathrm c}^*$. (Indeed, $\tau_{-\mu}\cong \tau_\mu$, but we shall not use this fact as it does not generalize to the complex case of Section \ref{complexg}.) Then $\tau_{-\mu}$ occurs with multiplicity one in $F_\mu^\vee$, and  $\tau_{-\mu}\supset (F_\mu^\vee)^{\n_n}$. Taking dual representations, we get the following result as in Lemma \ref{lweight}.
\begin{lem}\label{lweight2}
The representation  $\tau_{-\mu}^\vee$ occurs with multiplicity one in $F_\mu$. It contains the one dimensional space  $F_\mu^{\overline{\n_n}}$.
\end{lem}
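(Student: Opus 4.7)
The plan is to derive Lemma \ref{lweight2} from the two assertions stated immediately above it---that $\tau_{-\mu}$ appears with multiplicity one in the $\SO(n)$-module $F_\mu^\vee$, and that the line $(F_\mu^\vee)^{\n_n}$ lies in this copy of $\tau_{-\mu}$---in exactly the manner that Lemma \ref{lweight} is extracted from its own pre-lemma statement. The transport is effected by the canonical $\g_n$-invariant, and in particular $\SO(n)$- and $\t_n$-equivariant, pairing
\[
\langle\cdot,\cdot\rangle : F_\mu\otimes F_\mu^\vee\to\C.
\]
Because $F_\mu$ is finite dimensional it is completely reducible as an $\SO(n)$-module, and this pairing restricts to perfect pairings between the $\SO(n)$-isotypic components of matching types in $F_\mu$ and $F_\mu^\vee$. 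Multiplicity one of $\tau_{-\mu}$ in $F_\mu^\vee$ therefore yields multiplicity one of $\tau_{-\mu}^\vee$ in $F_\mu$, which is the first assertion of the lemma.

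For the inclusion $F_\mu^{\overline{\n_n}}\subset \tau_{-\mu}^\vee$, note that $\overline{\b_n}$ is the Borel opposite to $\b_n$ with respect to $\t_n$, so $F_\mu^{\overline{\n_n}}$ is the one dimensional lowest-$\b_n$-weight line of $F_\mu$ and carries the $\t_n$-weight $w_0\mu$, where $w_0$ denotes the longest element of $\oW_{\g_n}$. By $\t_n$-equivariance of the pairing, this line can only pair nontrivially against the weight-$(-w_0\mu)$ subspace of $F_\mu^\vee$, and that subspace is precisely the highest-$\b_n$-weight line $(F_\mu^\vee)^{\n_n}$. The latter lies in the copy of $\tau_{-\mu}$ by the paragraph preceding the lemma, so the orthogonality of distinct $\SO(n)$-isotypic pieces under $\langle\cdot,\cdot\rangle$ forces $F_\mu^{\overline{\n_n}}$ into the $\tau_{-\mu}^\vee$-isotypic part of $F_\mu$, which by multiplicity one is the unique copy $\tau_{-\mu}^\vee\subset F_\mu$.

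I anticipate no real obstacle: the argument is a formal dualization, entirely symmetric to the one producing Lemma \ref{lweight}. The one small point deserving a check is the identification $[\abs{-w_0\mu}]=[\abs{-\mu}]$, which is needed in the preceding pre-lemma assertion to see that the $\SO(n)$-subrepresentation of $F_\mu^\vee$ generated by $(F_\mu^\vee)^{\n_n}$ is indeed $\tau_{-\mu}$; this is immediate since $w_0\in \oW_{\g_n}$ and $\abs{\cdot}$ depends only on the $\oW_{\g_n}$-orbit.
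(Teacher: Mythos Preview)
Your proposal is correct and follows precisely the paper's approach: the paper derives Lemma~\ref{lweight2} from the preceding assertion about $\tau_{-\mu}\subset F_\mu^\vee$ by the single phrase ``Taking dual representations, we get,'' and your argument via the canonical pairing $F_\mu\otimes F_\mu^\vee\to\C$ is exactly what this phrase unpacks to. The one cosmetic point is that your $w_0$ should be the longest Weyl element relative to $\b_n$ rather than the standard Borel, but since the only fact you use is that $F_\mu^{\overline{\n_n}}$ and $(F_\mu^\vee)^{\n_n}$ carry opposite $\t_n$-weights (which is immediate from the definition of the dual), the argument goes through unchanged.
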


Define $\tau_{-\nu}$ similarly and put
\[
   \tau_{-\xi}:=\tau_{-\mu}\otimes \tau_{-\nu}.
\]
Similar to Lemma \ref{nv1}, we have the following result.
\begin{lem}\label{nv10} The representation  $\tau_{-\xi}^\vee$ occurs with multiplicity one in $F_\xi$.
Moreover, every nonzero element of $\Hom_{H_\C}(F_\xi, {\det}^{-j})$ does not vanish on $\tau_{-\xi}^\vee\subset F_\xi$.
\end{lem}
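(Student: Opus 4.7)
The plan is to mirror the proof of Lemma \ref{nv1} verbatim, with Lemma \ref{lweight2} playing the role of Lemma \ref{lweight}. The two halves of the statement---multiplicity one of $\tau_{-\xi}^\vee$ in $F_\xi$, and non-vanishing on that isotypic component---will be obtained in succession using the ingredients already assembled in Section \ref{fxi}.

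For the multiplicity one assertion, I would apply Lemma \ref{lweight2} once to $\mu$ and once to $\nu$. This gives that $\tau_{-\mu}^\vee$ occurs with multiplicity one in $F_\mu$ and contains the one dimensional space $F_\mu^{\overline{\n_n}}$, and analogously for $\nu$. Tensoring these two containments yields that the irreducible $K^\circ$-representation $\tau_{-\xi}^\vee = \tau_{-\mu}^\vee \otimes \tau_{-\nu}^\vee$ occurs with multiplicity one inside $F_\xi = F_\mu \otimes F_\nu$, and in particular contains the one dimensional subspace
\[
  F_\xi^{\bar\n} \;=\; F_\mu^{\overline{\n_n}} \otimes F_\nu^{\overline{\n_{n-1}}},
\]
where $\bar\n = \overline{\n_n} \times \overline{\n_{n-1}}$ acts block-diagonally on the tensor product.

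For the non-vanishing assertion, let $\phi$ be an arbitrary nonzero element of $\Hom_{H_\C}(F_\xi, {\det}^{-j})$. Since $H_\C = \GL_{n-1}(\C)$ is connected, $\phi$ is automatically $\h$-equivariant with respect to the differentiated character of $\det^{-j}$ (a one dimensional $\h$-module). Applying Lemma \ref{mulone} with $F = F_\xi$ and $\chi = \det^{-j}$ forces $\phi$ to be nonzero on the one dimensional space $F_\xi^{\bar\n}$. Since this line is contained in $\tau_{-\xi}^\vee$ by the previous paragraph, $\phi$ does not vanish on $\tau_{-\xi}^\vee$, which is what was claimed.

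I do not anticipate any substantial obstacle, as the argument is an essentially verbatim adaptation of Lemma \ref{nv1} with the roles of $F_\xi$ and $F_\xi^\vee$ interchanged. The only mild bookkeeping point is the factorization $F_\xi^{\bar\n} = F_\mu^{\overline{\n_n}} \otimes F_\nu^{\overline{\n_{n-1}}}$, which is immediate from $\bar\n = \overline{\n_n} \times \overline{\n_{n-1}}$. No inputs beyond Lemma \ref{mulone} and Lemma \ref{lweight2} should be needed.
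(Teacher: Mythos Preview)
Your proposal is correct and follows exactly the approach the paper intends: the paper states Lemma \ref{nv10} as ``Similar to Lemma \ref{nv1}'' without further proof, and your argument---using Lemma \ref{lweight2} (and its analog for $\nu$) together with Lemma \ref{mulone}---is precisely the analogous deduction.
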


In particular, Lemma \ref{nv1} and Lemma \ref{nv10} imply  that
\begin{equation}\label{lxi1}
  (\tau_\xi)^{C^\circ}\neq 0\qquad\textrm{and}\qquad (\tau_{-\xi})^{C^\circ}\neq 0.
\end{equation}

\subsection{The space $\wedge^{b_n+b_{n-1}}(\g/\tilde \k)$}\label{fxi2}

Denote by $2\rho_n\in \t_n^*$ the sum of all weights of $\n_n$, and by $2\rho_n^{\mathrm c}\in {\t_n^{\mathrm c}}^*$ the sum of all weights of $\n_n^\mathrm c$.
Write $\tau_n$ for the irreducible representation of $\SO(n)$ of highest weight $[2\rho_n]-2\rho_n^\mathrm c\in {\t_n^{\mathrm c}}^*$. The following lemma is easily verified and we omit the details.
\begin{lem}\label{ngln1}
The representation $\tau_n$ occurs with multiplicity one in $\wedge^{b_n}(\g_n/\g\o_n)$. It contains the one dimensional space $\wedge^{b_n}(\n_n/\n_n^{\mathrm c})$.
\end{lem}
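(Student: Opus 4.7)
The plan is to use the root-space decomposition of $\g_n$ under $\t_n$ together with the explicit description $\g\o_n=\{x\in\g_n : x+x^{\mathrm t}\in\C I\}$ to split $V:=\g_n/\g\o_n$ into $\t_n^{\mathrm c}$-stable summands, and then combine a highest-weight argument with a weight-multiplicity count.

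First, since $\b_n$ is $\theta_n$-stable, $\theta_n$ preserves each of $\n_n$, $\t_n$, and $\overline{\n_n}$. Decomposing $x\in\g\o_n$ as $x=x_-+x_0+x_+$ along $\g_n=\overline{\n_n}\oplus\t_n\oplus\n_n$ and using $x_{\pm}^{\mathrm t}=-\theta_n(x_{\pm})$, the three summands of $x+x^{\mathrm t}$ land in $\overline{\n_n}$, $\t_n$ and $\n_n$ respectively; the requirement that their sum lie in $\C I\subset\t_n$ forces $x_{\pm}\in\o_n$ (that is, $x_+\in\n_n^{\mathrm c}$ and $x_-\in\overline{\n_n^{\mathrm c}}$) and $x_0+x_0^{\mathrm t}\in\C I$. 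Hence
\[
  \g\o_n=(\g\o_n\cap\t_n)\oplus\n_n^{\mathrm c}\oplus\overline{\n_n^{\mathrm c}},
\]
giving $V=V^0\oplus V^+\oplus V^-$ with $V^+=\n_n/\n_n^{\mathrm c}$, $V^-=\overline{\n_n}/\overline{\n_n^{\mathrm c}}$ and $V^0=\t_n/(\g\o_n\cap\t_n)$, the last being a purely weight-zero $\t_n^{\mathrm c}$-module.

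Next, a direct dimension count by parity of $n$ (using the descriptions of $\Delta_n^+$ and $[\Delta_n]^+$ in Section~\ref{str}) yields $\dim V^+=b_n$, so $\wedge^{b_n}(V^+)$ is one-dimensional and carries the $\t_n^{\mathrm c}$-weight $[2\rho_n]-2\rho_n^{\mathrm c}$, obtained as the sum of the weights of $V^+$. Because $\n_n^{\mathrm c}\subset\n_n$ (each positive $\t_n^{\mathrm c}$-weight space of $\o_n$ is a sum of root spaces $\g_n^{\alpha}$ with $\alpha\in\Delta_n^+$, by the compatibility of positive systems built into Section~\ref{str}) and $\n_n$ is nilpotent, the adjoint action of $\n_n^{\mathrm c}$ on $V^+$ is nilpotent and hence acts trivially on the one-dimensional top wedge. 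Thus $\wedge^{b_n}(V^+)$ is a $\b_n^{\mathrm c}$-highest-weight vector of weight $[2\rho_n]-2\rho_n^{\mathrm c}$, which a short coroot check confirms is $\b_n^{\mathrm c}$-dominant; it therefore generates a subrepresentation of $\wedge^{b_n}(V)$ isomorphic to $\tau_n$.

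Finally, for multiplicity one it suffices to show that the $\t_n^{\mathrm c}$-weight space of weight $[2\rho_n]-2\rho_n^{\mathrm c}$ in $\wedge^{b_n}(V)$ is already one-dimensional. Choose a $\t_n^{\mathrm c}$-eigenbasis $e_1,\dots,e_{b_n}$ of $V^+$ with weights $\alpha_1,\dots,\alpha_{b_n}\in[\Delta_n]^+$, and a $\t_n^{\mathrm c}$-eigenbasis $f_1,\dots,f_{b_n}$ of $V^-$ with $f_i$ of weight $-\alpha_i$. A basis wedge of weight $\sum_k\alpha_k$ picks $p$ of the $e_i$ (indexed by $I$), $q$ of the $f_j$ (indexed by $J$), and $b_n-p-q$ vectors from $V^0$; the weight equation reduces to
\[
  \sum_{k\notin I}\alpha_k+\sum_{j\in J}\alpha_j=0.
\]
Since $[\Delta_n]^+$ lies in an open half-space of $(\t_n^{\mathrm c})^{*}$, both sums must be empty, forcing $I=\{1,\dots,b_n\}$ and $J=\emptyset$; the only such wedge is $\wedge^{b_n}(V^+)$. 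The main technical obstacle is the clean triangular decomposition of $\g\o_n$ in the first step; once that is established, the rest is weight bookkeeping.
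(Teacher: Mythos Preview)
Your proof is correct. The paper does not actually supply a proof of this lemma; it simply prefaces the statement with ``Obviously, we have'' and moves on. Your argument is the natural elaboration of what the author has in mind: once one observes the $\theta_n$-stable triangular decomposition $\g\o_n=(\g\o_n\cap\t_n)\oplus\n_n^{\mathrm c}\oplus\overline{\n_n^{\mathrm c}}$, the top wedge $\wedge^{b_n}(\n_n/\n_n^{\mathrm c})$ is visibly the unique line of extremal weight $[2\rho_n]-2\rho_n^{\mathrm c}$ in $\wedge^{b_n}(\g_n/\g\o_n)$, and the rest is standard highest-weight theory. So there is no genuine difference in approach---you have simply made explicit what the paper left implicit.

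One small remark: in your multiplicity-one step you use that the weights $\alpha_i$ of $V^+$ all lie in the open half-space determined by $[\Delta_n]^+$. This is indeed built into the paper's construction (Section~\ref{str} defines $\b_n$ as the Borel corresponding to the chosen positive system $[\Delta_n]^+$, so every $\t_n^{\mathrm c}$-weight of $\n_n$ is a positive restricted root), but it is worth saying so explicitly, since that is the point where your half-space argument hinges.
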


Define  $\tau_{n-1}$ similarly. Then
\[
   \tau_{n,n-1}:=\tau_n\otimes \tau_{n-1}
\]
is an irreducible representation of $K^\circ$. Note that $\tilde \k=\g\o_n\times \g\o_{n-1}$. Similar to Lemma \ref{ngln1}, we have the following lemma.
\begin{lem}\label{ngln2}
The representation $\tau_{n,n-1}$ occurs with multiplicity one in $\wedge^{b_n+b_{n-1}}(\g/\tilde \k)$. It contains the one dimensional space $\wedge^{b_n+b_{n-1}}(\n/\n^\mathrm c)$.
\end{lem}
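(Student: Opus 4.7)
The plan is to reduce Lemma \ref{ngln2} to Lemma \ref{ngln1} via the K\"unneth decomposition, with a weight-cone argument handling the multiplicity-one claim. Set $V_k := \g_k/\g\o_k$ and $\lambda_k := [2\rho_k] - 2\rho_k^\mathrm c$, the highest weight of $\tau_k$. Since $\g/\tilde\k = V_n \oplus V_{n-1}$ as $K^\circ$-modules, the K\"unneth formula gives
\[
\wedge^{b_n+b_{n-1}}(\g/\tilde\k) = \bigoplus_{p+q=b_n+b_{n-1}} \wedge^p V_n \otimes \wedge^q V_{n-1}.
\]
A direct dimension count shows $\dim(\n_k/\n_k^\mathrm c) = b_k$ in both parities of $k$, so $\wedge^{b_n+b_{n-1}}(\n/\n^\mathrm c) = \wedge^{b_n}(\n_n/\n_n^\mathrm c) \otimes \wedge^{b_{n-1}}(\n_{n-1}/\n_{n-1}^\mathrm c)$; by Lemma \ref{ngln1} applied to each factor, this one-dimensional space lies in $\tau_n \otimes \tau_{n-1} = \tau_{n,n-1}$ sitting inside the summand $\wedge^{b_n} V_n \otimes \wedge^{b_{n-1}} V_{n-1}$. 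This already gives the containment claim.

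For multiplicity one, I would compute the multiplicity of $\tau_{n,n-1}$ in $\wedge^{b_n+b_{n-1}}(\g/\tilde\k)$ as the dimension of the $\lambda$-weight space of the $\n^\mathrm c$-invariants, where $\lambda = (\lambda_n, \lambda_{n-1})$. By K\"unneth this splits as
\[
\sum_{p+q=b_n+b_{n-1}} \dim (\wedge^p V_n)^{\n_n^\mathrm c}_{\lambda_n} \cdot \dim (\wedge^q V_{n-1})^{\n_{n-1}^\mathrm c}_{\lambda_{n-1}},
\]
with the $(p,q)=(b_n,b_{n-1})$ summand contributing $1$ by Lemma \ref{ngln1}. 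The task reduces to showing that every other summand vanishes.

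The main technical step is to prove that the $\lambda_k$-weight space of $\wedge^p V_k$ already vanishes for $p < b_k$. For this I would use the triangular decomposition
\[
V_k = (\bar{\n}_k/\bar{\n}_k^\mathrm c) \oplus (\t_k/(\t_k^\mathrm c + \C \cdot I)) \oplus (\n_k/\n_k^\mathrm c),
\]
whose $\t_k^\mathrm c$-weights lie in $-[\Delta_k]^+$, $\{0\}$, and $[\Delta_k]^+$ respectively. Let $\alpha_1, \ldots, \alpha_{b_k}$ be the weights of $\n_k/\n_k^\mathrm c$ (with multiplicity), so $\sum_i \alpha_i = \lambda_k$. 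A basis element of $\wedge^p V_k$ of weight $\lambda_k$ corresponds to subsets $A, B \subseteq [b_k]$ (selecting positive and negative weight basis vectors) and a subset $C$ of the zero-weight basis, with $|A|+|B|+|C|=p$, satisfying $\sum_{i \in A} \alpha_i - \sum_{i \in B} \alpha_i = \sum_{i=1}^{b_k} \alpha_i$. Rearranging yields
\[
\sum_{i \in [b_k] \setminus A} \alpha_i + \sum_{i \in B} \alpha_i = 0.
\]
Since $[\Delta_k]^+$ sits in a pointed cone inside the real form of $(\t_k^\mathrm c)^*$, this nonnegative combination can vanish only when empty: $A = [b_k]$ and $B = \emptyset$, hence $p = b_k + |C| \geq b_k$.

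Combined with the analogous statement for $k = n-1$, the constraints $p \geq b_n$, $q \geq b_{n-1}$, and $p+q = b_n + b_{n-1}$ force $(p, q) = (b_n, b_{n-1})$, giving total multiplicity $1$. The hardest part will be the pointed-cone step; once one fixes a regular element of $\t_k^\mathrm c$ on which every $\alpha \in [\Delta_k]^+$ is strictly positive, this is straightforward, but it needs to be stated explicitly from the positive system chosen in \eqref{pso0}.
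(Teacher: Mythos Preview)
Your argument is correct. The paper itself offers no proof beyond the phrase ``Similar to Lemma~\ref{ngln1}'' (and Lemma~\ref{ngln1} is in turn declared ``obvious''), so any sound argument fills the gap, and your weight--cone computation does. Two minor remarks. First, your notation tacitly assumes that the $\t_k^{\mathrm c}$-weights of $\bar\n_k/\bar\n_k^{\mathrm c}$ are exactly the negatives (with multiplicity) of those of $\n_k/\n_k^{\mathrm c}$; this is true---it follows from the fact that $\theta_k$ fixes $\t_k^{\mathrm c}$ pointwise and sends each positive root to a positive root, so the noncompact positive root spaces and the noncompact negative root spaces are in weight-negating bijection---but it is worth saying explicitly. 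Second, the K\"unneth detour is not really needed: the very same cone argument, applied directly to the $\t^{\mathrm c}$-weight decomposition of $\g/\tilde\k$ into $(\bar\n/\bar\n^{\mathrm c})\oplus(\t/(\t^{\mathrm c}+\mathrm{center}))\oplus(\n/\n^{\mathrm c})$, shows that the weight $([2\rho_n]-2\rho_n^{\mathrm c},[2\rho_{n-1}]-2\rho_{n-1}^{\mathrm c})$ occurs exactly once in $\wedge^{b_n+b_{n-1}}(\g/\tilde\k)$, carried by $\wedge^{b_n+b_{n-1}}(\n/\n^{\mathrm c})$. That is presumably what ``similar to Lemma~\ref{ngln1}'' is pointing at, and it avoids having to control the individual factors $\wedge^p V_k$ for $p\neq b_k$.
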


Using Lemma \ref{ngln2}, we fix a nonzero element
\begin{equation}\label{etan}
   \eta_{n,n-1}\in \Hom_{K^\circ}(\wedge^{b_n+b_{n-1}}(\g/\tilde \k), \tau_{n,n-1}).
\end{equation}
Write
\[
  \iota_{n,n-1}: \wedge^{b_n+b_{n-1}}(\h/\c)\rightarrow \wedge^{b_n+b_{n-1}}(\g/\tilde \k)
\]
for the natural embedding.
\begin{lem}\label{nv2}
The composition
\begin{equation}\label{taueta}
  \wedge^{b_n+b_{n-1}}(\h/\c)\xrightarrow{\iota_{n,n-1}} \wedge^{b_n+b_{n-1}}(\g/\tilde \k)\xrightarrow{\eta_{n,n-1}}  \tau_{n,n-1}
\end{equation}
is nonzero. Its image is equal to $(\tau_{n,n-1})^{C^\circ}$.
\end{lem}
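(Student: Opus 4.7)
My plan is to argue in three steps: a dimensional reduction shrinking the target to dimension at most one, a structural decomposition of $\g/\tilde\k$, and a non-vanishing computation that I expect to be the main obstacle.

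First, the count $\dim(\h/\c)=(n-1)^2-\binom{n-1}{2}=n(n-1)/2=b_n+b_{n-1}$ shows that the source $\wedge^{b_n+b_{n-1}}(\h/\c)$ is the top exterior power, hence one-dimensional, and trivial as a $C^\circ$-module since $C^\circ=\SO(n-1)$ is connected (any one-dimensional representation of a connected real Lie group through a determinant character is trivial). Because $\c=\h\cap\tilde\k$, the embedding $\iota_{n,n-1}$ is $C^\circ$-equivariant, and $\eta_{n,n-1}$ is $K^\circ$-equivariant, so the composition is $C^\circ$-equivariant. Its image therefore lies in $(\tau_{n,n-1})^{C^\circ}$, which has dimension at most one by Lemma \ref{mulone2}. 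Consequently, once non-vanishing is shown, the image automatically fills $(\tau_{n,n-1})^{C^\circ}$, giving the second assertion of the lemma.

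Next, using Lemma \ref{decg} together with the observation that $\tilde\k/\k\cong\C I_n\oplus\C I_{n-1}$ lies in $\t\subset\b$, I obtain $\tilde\k=\c\oplus(\tilde\k\cap\b)$ and therefore a $C^\circ$-stable direct sum
\[
\g/\tilde\k=(\h/\c)\oplus\m,\qquad \m:=\b/(\tilde\k\cap\b).
\]
The induced bidegree decomposition of $\wedge^{b_n+b_{n-1}}(\g/\tilde\k)$ places the image of $\iota_{n,n-1}$ entirely in the $(b_n+b_{n-1},0)$ piece, while the highest-weight line $\wedge^{b_n+b_{n-1}}(\n/\n^c)\subset\tau_{n,n-1}$ supplied by Lemma \ref{ngln2} sits in the $(0,b_n+b_{n-1})$ piece via the inclusion $\n/\n^c\hookrightarrow\m$ (which holds because $\n\cap\tilde\k=\n^c$). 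These two $C^\circ$-invariant lines thus live in complementary bidegrees, which is what makes the identification nontrivial.

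The crux is to show that the composition does not vanish. My plan is to pair a generator $\omega\in\wedge^{b_n+b_{n-1}}(\h/\c)$ against a $C^\circ$-invariant vector $v_0\in\tau_{n,n-1}^{C^\circ}$, constructed by averaging the highest-weight generator $v\in\wedge^{b_n+b_{n-1}}(\n/\n^c)\subset\tau_{n,n-1}$ over $C^\circ$. First, to see $v_0\neq 0$, I would check that $\tau_{n-1}^{\vee}$ occurs in the $\SO(n-1)$-branching of $\tau_n$; this can be read off from the explicit formulas $[2\rho_n]-2\rho_n^c$ and $[2\rho_{n-1}]-2\rho_{n-1}^c$ by classical branching rules for $\SO(n)\downarrow\SO(n-1)$. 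The main obstacle is then to show that the $\tau_{n,n-1}$-isotypic projection of $\iota_{n,n-1}(\omega)$ is a nonzero multiple of $v_0$ rather than zero. I expect this to require either an explicit model calculation, realizing $\g/\tilde\k$ as symmetric matrices modulo scalar multiples of the identity and computing in coordinates in the spirit of the Kasten-Schmidt treatment of $n=3$, or a slicker argument exploiting the uniqueness of the $C^\circ$-invariant line to identify $\iota_{n,n-1}(\omega)$ with the $C^\circ$-average of $v$ up to a nonzero scalar.
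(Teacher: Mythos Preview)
Your first step is correct and matches the paper's concluding argument. The genuine gap is in the third step: you correctly identify the nonvanishing of $\eta_{n,n-1}\circ\iota_{n,n-1}$ as the crux but leave it unproved, offering only the options of an explicit coordinate computation or an unspecified ``slicker argument.'' Neither is carried out.

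Your second step also contains two slips --- the decomposition $\g/\tilde\k=(\h/\c)\oplus\m$ is \emph{not} $C^\circ$-stable (the adjoint action of $\SO(n-1)$ does not preserve $\b_n$ or $\b_{n-1}$), and the highest-weight line $\wedge^{b_n+b_{n-1}}(\n/\n^{\mathrm c})$ is not $C^\circ$-invariant --- but more importantly this step points in the wrong direction. That bidegree splitting is neither $K^\circ$-equivariant nor orthogonal, so the observation that the two lines sit in complementary pieces carries no information about whether $\eta_{n,n-1}$ relates them.

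The idea you are missing is to introduce a $K$-invariant positive definite Hermitian form $\langle\,,\,\rangle$ on $\g/\tilde\k$. Then $\eta_{n,n-1}$ is a scalar multiple of the orthogonal projection onto the copy of $\tau_{n,n-1}$, and by Lemma~\ref{ngln2} it suffices to show that the one-dimensional spaces $\wedge^{b_n+b_{n-1}}(\h/\c)$ and $\wedge^{b_n+b_{n-1}}(\n/\n^{\mathrm c})$ are not orthogonal under the induced form on the exterior power. This is exactly the nondegeneracy of the restricted pairing $\h/\c\times\n/\n^{\mathrm c}\to\C$. The Hermitian orthogonal complement of $\n/\n^{\mathrm c}$ is $\bar\b/(\bar\b\cap\tilde\k)$ (not $\b/(\b\cap\tilde\k)$: the sesquilinearity sends one to the conjugate Borel), and the complex conjugate of the decomposition in Lemma~\ref{decg} gives $\g=\h\oplus\bar\b$, hence $(\h/\c)\cap(\bar\b/(\bar\b\cap\tilde\k))=0$. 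That is the whole argument: no branching computation, no averaging, no coordinates.
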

\begin{proof}
Fix a $K$-invariant positive definite Hermitian form $\la\,,\ra$ on $\g/\tilde \k$. This induces a $K$-invariant positive definite Hermitian form $\la\,,\ra_\wedge$ on $\wedge^{b_n+b_{n-1}}(\g/\tilde \k)$. Note that
\[
\eta_{n,n-1}: \wedge^{b_n+b_{n-1}}(\g/\tilde \k)\rightarrow \tau_{n,n-1}
\]
is a scalar multiple of the orthogonal projection. By Lemma \ref{ngln2}, in order to prove the first assertion of the lemma,
it suffices to show that the one dimensional spaces
$\wedge^{b_n+b_{n-1}}(\h/\c)$ and $\wedge^{b_n+b_{n-1}}(\n/\n_\mathrm c)$
are not perpendicular to each other under the form
$\la\,,\,\ra_{\wedge}$, or equivalently, the paring
\[
  \la\,,\,\ra: \h/\c\times \n/\n^{\mathrm c}\rightarrow  \C
\]
is nondegenerate.

Note that the orthogonal complement of $\n/\n^{\mathrm c}$ in $\g/\tilde \k$ is $\bar \b/(\bar \b\cap \tilde \k)$. It follows from Lemma \ref{decg} that
\[
  (\h/\c)\cap (\bar \b/(\bar \b\cap \tilde \k))=0.
\]
This proves the first assertion of the lemma.

The image of the composition \eqref{taueta} is a nonzero subspace of  $(\tau_{n,n-1})^{C^\circ}$.  By Lemma \ref{mulone2}, the latter space is at most one dimensional. Therefore the second assertion follows.
\end{proof}

In particular, Lemma \ref{nv2} implies that
\begin{equation}\label{taunn}
  (\tau_{n,n-1})^{C^\circ}\neq 0.
\end{equation}

\subsection{Cartan products and PRV components}\label{cpprv}
Let $R$ be a connected compact Lie group. In this subsection, we review some general results about tensor products of irreducible representations of $R$.
Let $\sigma_1$ and $\sigma_2$ be two irreducible representations of $R$.  Fix a Cartan subgroup of $R$ and fix a positive system of the associated root system. Respectively write $\lambda^+_i$ and $\lambda_i^-$ for the highest and lowest weights of $\sigma_i$ ($i=1,2$).

Write $\sigma_3$ for the irreducible representation of $R$ of highest weight $\lambda_1^++\lambda_2^+$ (or equivalently, of lowest weight $\lambda_1^- +\lambda_2^-$). The following lemma is obvious.
\begin{lem}\label{cartanc}
The representation $\sigma_3$ occurs with  multiplicity one in $\sigma_1\otimes \sigma_2$. It contains all tensor products of lowest weight vectors in $\sigma_1$ and $\sigma_2$.
\end{lem}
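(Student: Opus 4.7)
The plan is to exploit the highest/lowest weight structure of the tensor product $\sigma_1\otimes\sigma_2$. Fix nonzero highest weight vectors $v_i^+\in\sigma_i$ (of weight $\lambda_i^+$) and nonzero lowest weight vectors $v_i^-\in\sigma_i$ (of weight $\lambda_i^-$) for $i=1,2$. The strategy has two parts: exhibit a copy of $\sigma_3$ inside $\sigma_1\otimes\sigma_2$ generated at the top, and then show that the lowest weight space of this copy is forced to be spanned by $v_1^-\otimes v_2^-$ by a weight-counting argument.

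First I would verify that $v_1^+\otimes v_2^+$ is a highest weight vector of $\sigma_1\otimes\sigma_2$ of weight $\lambda_1^++\lambda_2^+$: it is annihilated by every positive root vector by the Leibniz rule, and it is nonzero. Hence, by complete reducibility, the $R$-submodule of $\sigma_1\otimes\sigma_2$ it generates is isomorphic to $\sigma_3$. For the multiplicity assertion, I would analyze the weight $\lambda_1^++\lambda_2^+$ weight space of $\sigma_1\otimes\sigma_2$. Any decomposition $\lambda_1^++\lambda_2^+=\mu_1+\mu_2$ with $\mu_i$ a weight of $\sigma_i$ forces $\lambda_i^+-\mu_i$ to be simultaneously a nonnegative integer combination of positive roots (because $\lambda_i^+$ is the highest weight of $\sigma_i$) and the negative of such a combination (from the other factor). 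Hence $\mu_i=\lambda_i^+$, the weight space is one-dimensional, and therefore $\sigma_3$ occurs with multiplicity exactly one.

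For the second claim, I would run the same weight-counting argument at the bottom: the weight $\lambda_1^-+\lambda_2^-$ weight space of $\sigma_1\otimes\sigma_2$ is one-dimensional and is spanned by $v_1^-\otimes v_2^-$, since $\lambda_i^-$ is the lowest weight of $\sigma_i$. On the other hand, applying the longest Weyl group element $w_0$ and using $w_0\lambda_i^+=\lambda_i^-$, the lowest weight of $\sigma_3$ is
\[
w_0(\lambda_1^++\lambda_2^+)=\lambda_1^-+\lambda_2^-,
\]
so the unique copy of $\sigma_3$ found in the first step already contains a nonzero vector of weight $\lambda_1^-+\lambda_2^-$. Since this weight space of $\sigma_1\otimes\sigma_2$ is one-dimensional and spanned by $v_1^-\otimes v_2^-$, that vector must lie in $\sigma_3$, proving the lemma.

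There is no real obstacle here; the only thing to be careful about is the elementary verification that both the highest and lowest weight spaces of $\sigma_1\otimes\sigma_2$ at weights $\lambda_1^\pm+\lambda_2^\pm$ are one-dimensional, which is the standard partial-order argument on weights used above.
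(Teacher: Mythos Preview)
The paper gives no proof of this lemma, stating only that it is ``obvious''; your proposal supplies exactly the standard highest-weight argument the paper presumably has in mind, and it is correct. The one-dimensionality of the extremal weight spaces via the partial-order argument, the identification of $v_1^+\otimes v_2^+$ as a highest weight vector, and the use of $w_0$ to pin down the lowest weight vector as $v_1^-\otimes v_2^-$ are all fine.
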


The representation $\sigma_3$ is called the Cartan component of $\sigma_1\otimes \sigma_2$, or the Cartan product of $\sigma_1$ and $\sigma_2$. The following lemma is known (see \cite[Section 2.1]{Ya}).
\begin{lem}\label{cprod}
Let $f: \sigma_1\otimes \sigma_2\rightarrow \sigma_3$ be a nonzero $R$-equivariant linear map. Then $f$ maps all nonzero decomposable vectors (namely, vectors of the form $u\otimes v\in \sigma_1\otimes \sigma_2$) to nonzero vectors.
\end{lem}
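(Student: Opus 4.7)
My plan is a lowest-weight argument after passage to the complexification. Let $G := R_\C$ be the complexified group, which is a connected reductive complex algebraic group, and hence an irreducible algebraic variety. The representations $\sigma_1,\sigma_2,\sigma_3$ extend canonically to algebraic representations of $G$, and $f$ remains $G$-equivariant. Given nonzero $u\in\sigma_1$ and $v\in\sigma_2$, the identity $f(gu\otimes gv)=g\cdot f(u\otimes v)$ shows that it suffices to produce some $g\in G$ with $f(gu\otimes gv)\neq 0$.

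Fix lowest weight vectors $v_i^-\in\sigma_i$ of weight $\lambda_i^-$ for $i=1,2$. By Lemma~\ref{cartanc}, $\sigma_3$ appears with multiplicity one in $\sigma_1\otimes\sigma_2$, so $\Hom_R(\sigma_1\otimes\sigma_2,\sigma_3)$ is one-dimensional and $f$ is a nonzero scalar multiple of the canonical projection onto $\sigma_3$. Since $v_1^-\otimes v_2^-$ lies in $\sigma_3$ by Lemma~\ref{cartanc}, we conclude $f(v_1^-\otimes v_2^-)\neq 0$. The lowest weight of $\sigma_1\otimes\sigma_2$ is $\lambda_1^-+\lambda_2^-$, and the corresponding weight space is the one-dimensional line $\C(v_1^-\otimes v_2^-)$. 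Torus-equivariance of $f$ therefore gives that the $(\lambda_1^-+\lambda_2^-)$-weight component of $f(u'\otimes v')$ equals $f(u'_{\lambda_1^-}\otimes v'_{\lambda_2^-})$, a scalar multiple of $f(v_1^-\otimes v_2^-)$, and is nonzero as soon as both weight components $u'_{\lambda_1^-}$ and $v'_{\lambda_2^-}$ are nonzero.

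It therefore remains to find $g\in G$ with $(gu)_{\lambda_1^-}\neq 0$ and $(gv)_{\lambda_2^-}\neq 0$. Let $A:=\{g\in G\mid (gu)_{\lambda_1^-}=0\}$; it is Zariski-closed, and if $A=G$ then the weight projection $\sigma_1\to\sigma_1^{\lambda_1^-}$ would vanish on the orbit $G\cdot u$, hence on its linear span, which equals all of $\sigma_1$ by irreducibility. This contradicts $v_1^-\in\sigma_1^{\lambda_1^-}\setminus\{0\}$, so $A\subsetneq G$, and analogously $B:=\{g\in G\mid (gv)_{\lambda_2^-}=0\}\subsetneq G$. Since $G$ is irreducible as an algebraic variety, $A\cup B$ is a proper closed subvariety, and any $g\in G\setminus(A\cup B)$ works.

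The only step requiring a little care is the last: the conclusion $A\cup B\neq G$ rests on the irreducibility of $G$ as an algebraic variety, a property that fails for general connected real-analytic manifolds. This is the reason for passing from $R$ to $G=R_\C$; the remainder of the argument is a direct bookkeeping with weight decompositions.
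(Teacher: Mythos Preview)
Your proof is correct. The paper does not supply its own proof of this lemma, only a citation to \cite[Section 2.1]{Ya}. The argument there (standard in the literature) realizes each $\sigma_i$ as a space of regular functions on $G=R_\C$ via matrix coefficients, in such a way that the Cartan product map $f$ becomes pointwise multiplication; since $\C[G]$ is an integral domain, a product of nonzero functions is nonzero. Your route is different: rather than invoking any functional realization, you translate the decomposable tensor by a Zariski-generic $g\in G$ until both factors acquire a nonzero lowest-weight component, and then read off nonvanishing from the one-dimensionality of the lowest-weight space of $\sigma_1\otimes\sigma_2$. Both approaches ultimately rest on the irreducibility of $G$ as a variety (for you, so that two proper closed subsets cannot cover $G$; for the function model, so that $\C[G]$ has no zero-divisors), but yours is pleasantly self-contained.
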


Note that the weights $\lambda_1^+ +\lambda_2^-$ and $\lambda_1^-+\lambda_2^+$ stay in the same orbit under the Weyl group action. Write $\sigma_4$ for the irreducible representation of $R$ with extremal weights $\lambda_1^+ +\lambda_2^-$ and $\lambda_1^-+\lambda_2^+$.

\begin{lem}\cite[Corollary
1 to Theorem 2.1]{PRV}
The representation $\sigma_4$ occurs with  multiplicity one in $\sigma_1\otimes \sigma_2$.
\end{lem}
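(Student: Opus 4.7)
The statement is the multiplicity-one part of the classical theorem of Parthasarathy, Ranga Rao, and Varadarajan on the so-called PRV component, so the plan is to reproduce that argument in three steps.

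First I would check that $\sigma_4$ is well defined. The longest Weyl element $w_0$ sends highest weights to lowest weights, so $w_0(\lambda_1^+ + \lambda_2^-) = \lambda_1^- + \lambda_2^+$, and the two prescribed extremal weights lie in a single Weyl orbit. Let $\nu$ denote the unique dominant representative of this orbit; then $\sigma_4$ is the irreducible representation of $R$ with highest weight $\nu$, and both $\lambda_1^+ + \lambda_2^-$ and $\lambda_1^- + \lambda_2^+$ are $W$-translates of $\nu$, hence extremal weights of $\sigma_4$ with one-dimensional weight spaces.

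Second, for multiplicity $\geq 1$, I would construct a highest-weight vector of weight $\nu$ inside $\sigma_1\otimes \sigma_2$ explicitly. Starting from $v_1^+\otimes v_2^-$, which has weight $\lambda_1^+ + \lambda_2^-$, one exploits the Leibniz rule: since $v_1^+$ is killed by every simple raising operator and $v_2^-$ by every simple lowering operator, applying a simple raising operator $e_\alpha$ acts only on the second factor, and applying a simple lowering operator $f_\alpha$ acts only on the first. Choosing a reduced expression for the minimal-length $w \in W$ with $w(\lambda_1^+ + \lambda_2^-) = \nu$, one applies the appropriate power of the corresponding root vector at each step (reading off the sign from whether the intermediate weight pairs positively or negatively with $\alpha^\vee$). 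The result is a vector of weight $\nu$ annihilated by every simple raising operator, hence a highest-weight vector, producing a copy of $\sigma_4 \subset \sigma_1\otimes \sigma_2$.

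Third, and this is the main obstacle, one must show multiplicity $\leq 1$. I would write an arbitrary weight-$\nu$ vector in $\sigma_1\otimes \sigma_2$ as $\sum c_{\mu_1, \mu_2}\, u_{\mu_1}\otimes w_{\mu_2}$ with $\mu_1 + \mu_2 = \nu$, and impose the highest-weight conditions $e_j(\cdot) = 0$ for every simple positive root $\alpha_j$. The extremality of $\nu$ in each factor (it lies on the boundary of both weight polytopes, in a precise sense) should rigidify the propagation of the coefficients and pin them down up to one overall scalar. A more conceptual alternative is to invoke Kashiwara's crystal bases: the PRV component corresponds to a distinguished extremal connected component of the tensor product crystal $B(\sigma_1)\otimes B(\sigma_2)$, and its uniqueness is built into the crystal tensor product structure. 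Either route requires genuine combinatorial bookkeeping; the explicit construction in the previous step is relatively painless, but ruling out a second copy of $\sigma_4$ is where the real content of the lemma lies.
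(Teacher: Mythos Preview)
The paper does not prove this lemma; it is quoted directly from \cite[Corollary~1 to Theorem~2.1]{PRV} with no argument given. So there is no in-paper proof to compare against, and your proposal is an attempt to reconstruct the PRV argument from scratch.

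There is a genuine gap in your step~2. The procedure you describe does not in general produce a highest-weight vector. Take $R=\SU(3)$ with $\sigma_1=\sigma_2$ the adjoint representation: then $\lambda_1^{+}+\lambda_2^{-}=0$ is already dominant, so your minimal-length $w$ is the identity and no root operators are applied; yet $v_1^{+}\otimes v_2^{-}$ (the tensor of highest and lowest root vectors) is not $R$-invariant, since $e_{\alpha}(v_1^{+}\otimes v_2^{-})=v_1^{+}\otimes e_{\alpha}v_2^{-}\neq 0$ for each simple root $\alpha$. More generally, the Leibniz observation you make is valid only for the initial vector $v_1^{+}\otimes v_2^{-}$; after one application of a root operator the result is no longer a pure tensor of a highest-weight and a lowest-weight vector, so the ``acts only on one factor'' mechanism you rely on breaks down, and there is no reason the end product should be annihilated by all raising operators. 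The original PRV paper does not build the highest-weight vector by such a walk; it deduces the multiplicity-one statement from an explicit tensor-product multiplicity formula (their Theorem~2.1), of which the lemma is a specialization.

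You also rightly identify step~3 (multiplicity $\leq 1$) as the substantive part and leave it essentially open, offering only two possible directions. Between the incorrect construction in step~2 and the incomplete step~3, what you have is a plan with a flawed middle step rather than a proof. The crystal-basis route you mention is in fact a clean modern way to get both existence and uniqueness at once, but it needs to be carried out, not just named.
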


The representation $\sigma_4$ is called the PRV component of $\sigma_1\otimes \sigma_2$.

\begin{lem}\label{prvc}
The PRV component of $\sigma_1^\vee\otimes \sigma_3$ is isomorphic to $\sigma_2$.
\end{lem}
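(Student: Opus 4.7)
The plan is to unwind the definitions of Cartan component and PRV component in terms of highest and lowest weights and observe that the extremal weights of the PRV component of $\sigma_1^\vee\otimes\sigma_3$ simplify to exactly the extremal weights of $\sigma_2$.

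First I would record the effect of taking contragredients on extremal weights: since dualization sends the weight diagram to its negative, the highest and lowest weights of $\sigma_1^\vee$ are $-\lambda_1^-$ and $-\lambda_1^+$ respectively. By the very definition given in the text, $\sigma_3$ has highest weight $\lambda_1^++\lambda_2^+$ and lowest weight $\lambda_1^-+\lambda_2^-$.

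Next I would compute the two extremal weights that define the PRV component of $\sigma_1^\vee\otimes\sigma_3$:
\begin{align*}
(\text{highest of }\sigma_1^\vee)+(\text{lowest of }\sigma_3) &=(-\lambda_1^-)+(\lambda_1^-+\lambda_2^-)=\lambda_2^-,\\
(\text{lowest of }\sigma_1^\vee)+(\text{highest of }\sigma_3) &=(-\lambda_1^+)+(\lambda_1^++\lambda_2^+)=\lambda_2^+.
\end{align*}
Thus the PRV component of $\sigma_1^\vee\otimes\sigma_3$ is, by definition, the irreducible representation of $R$ with extremal weights $\lambda_2^+$ and $\lambda_2^-$. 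Since an irreducible representation is determined by its highest weight and $\lambda_2^+$ is dominant, this representation is precisely $\sigma_2$.

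There is essentially no obstacle here: once the dualization rule is stated, the computation is purely formal. The only thing worth a line of justification, if desired, is the implicit consistency check that $\lambda_2^+$ and $\lambda_2^-$ do lie in a single Weyl orbit (guaranteed since they are extremal weights of the irreducible $\sigma_2$), so that the PRV component is well-defined in the sense of the preceding paragraph of the paper.
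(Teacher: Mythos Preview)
Your proof is correct and follows the same approach as the paper's. The paper's proof consists of a single line noting that the lowest weight of $\sigma_1^\vee$ is $-\lambda_1^+$, leaving the reader to perform exactly the extremal-weight computation you wrote out in full; you have simply made the implicit step explicit (and computed both extremal weights rather than just one).
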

\begin{proof}
The lemma follows by noting that the lowest weight of $\sigma_1^\vee$ is $-\lambda_1^+$.
\end{proof}

It is obvious that the arguments of this subsection also apply to finite dimensional algebraic representations of connected reductive complex linear algebraic groups.

\subsection{PRV components and classical branching rules}
Now we return to the setting before Section \ref{cpprv}. Let $\sigma_1$ and $\sigma_2$ be two irreducible representations of $K^\circ$ and write $\sigma_3$ for their Cartan product. Assume that
\[
  (\sigma_i)^{C^\circ}\neq 0,  \qquad \textrm{for } i=1,2.
\]

\begin{lem}\label{cartann}
The space $(\sigma_3)^{C^\circ}$ is nonzero.
\end{lem}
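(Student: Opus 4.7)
The plan is to exploit Lemma \ref{cprod} directly, which is essentially tailor-made for this situation. The key observation is that the Cartan projection separates tensor products of nonzero vectors from zero, and $C^{\circ}$-invariance of a decomposable vector is automatic as long as each factor is $C^{\circ}$-invariant.

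First I would pick nonzero vectors $v_1 \in (\sigma_1)^{C^\circ}$ and $v_2 \in (\sigma_2)^{C^\circ}$, which exist by hypothesis. Since $C^{\circ}$ acts diagonally on $\sigma_1 \otimes \sigma_2$, the decomposable vector $v_1 \otimes v_2$ lies in $(\sigma_1 \otimes \sigma_2)^{C^\circ}$, and it is nonzero.

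Next, by Lemma \ref{cartanc} the Cartan component $\sigma_3$ occurs with multiplicity one in $\sigma_1 \otimes \sigma_2$, so there exists a nonzero $K^{\circ}$-equivariant linear map
\[
  f: \sigma_1 \otimes \sigma_2 \longrightarrow \sigma_3.
\]
Apply Lemma \ref{cprod} with $R = K^{\circ}$: the conclusion is that $f(v_1 \otimes v_2) \neq 0$. Since $C^{\circ} \subset K^{\circ}$ and $f$ is $K^{\circ}$-equivariant, the image $f(v_1 \otimes v_2)$ is a nonzero $C^{\circ}$-invariant element of $\sigma_3$. Therefore $(\sigma_3)^{C^\circ} \neq 0$, as desired.

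There is no real obstacle here: the only nontrivial ingredient is Lemma \ref{cprod}, which is already stated. The argument consists entirely of combining the existence of nonzero $C^{\circ}$-fixed vectors in each factor with the "separating" property of the Cartan projection on decomposable vectors.
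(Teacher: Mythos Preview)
Your proof is correct. It differs slightly from the paper's own argument, which cites Lemmas \ref{mulone2} and \ref{cartanc} rather than Lemma \ref{cprod}. The paper's route is (implicitly) dual to yours: given nonzero $C^\circ$-invariant linear functionals $\phi_i$ on $\sigma_i$, Lemma \ref{mulone2} guarantees each $\phi_i$ is nonzero on the lowest weight line $\sigma_i^{\overline{\n^{\mathrm c}}}$; then $\phi_1\otimes\phi_2$ is a $C^\circ$-invariant functional not vanishing on the tensor of lowest weight vectors, which by Lemma \ref{cartanc} lies in $\sigma_3\subset\sigma_1\otimes\sigma_2$. Your argument instead pushes $C^\circ$-fixed \emph{vectors} forward through the Cartan projection and invokes Lemma \ref{cprod} to see the image is nonzero. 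Your version has the mild advantage of not using anything specific to the pair $(K^\circ,C^\circ)$: it would prove the analogous statement for any subgroup of any connected compact group, whereas the paper's argument relies on the particular decomposition $\k=\c\oplus\b^{\mathrm c}$ behind Lemma \ref{mulone2}. Both approaches draw only on lemmas already in the paper and are of comparable length.
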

\begin{proof}
This is a direct consequence of Lemmas \ref{mulone2} and \ref{cartanc}.
\end{proof}

\begin{prpl}\label{nprv}
Every nonzero $C^\circ \times C^\circ$-invariant linear functional on $\sigma_1^\vee\otimes \sigma_3$ does not vanish on the PRV component
\[
  \sigma_2\subset \sigma_1^\vee\otimes \sigma_3.
\]
\end{prpl}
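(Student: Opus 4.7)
My plan is to make a nonzero $C^\circ \times C^\circ$-invariant functional explicit and then reduce the non-vanishing to Lemma \ref{cprod} applied to a single well-chosen decomposable vector.

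First I would pin down the space of invariant functionals. By Lemma \ref{cartann} one has $(\sigma_3)^{C^\circ} \neq 0$, and combining this with Lemma \ref{mulone2} (applied to $\sigma_1$, $\sigma_3$ and their contragredients, together with the semisimplicity of $C^\circ$) all four spaces $(\sigma_1)^{C^\circ}$, $(\sigma_1^\vee)^{C^\circ}$, $(\sigma_3)^{C^\circ}$, $(\sigma_3^\vee)^{C^\circ}$ are one-dimensional. Complete reducibility of $C^\circ$-representations then yields
\[
\Hom_{C^\circ \times C^\circ}(\sigma_1^\vee \otimes \sigma_3,\,\C) \cong (\sigma_1)^{C^\circ} \otimes (\sigma_3^\vee)^{C^\circ},
\]
which is one-dimensional. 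Fixing nonzero $u \in (\sigma_1)^{C^\circ}$, $w \in (\sigma_3)^{C^\circ}$ and $w^* \in (\sigma_3^\vee)^{C^\circ}$ (which, by non-degeneracy of the pairing between the $C^\circ$-invariants, satisfies $w^*(w) \neq 0$), every nonzero such functional is a scalar multiple of
\[
\ell(f \otimes x) := f(u)\, w^*(x).
\]

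Next I would identify the PRV embedding with the Cartan projection via tensor--hom adjunction. Write $\iota \colon \sigma_2 \hookrightarrow \sigma_1^\vee \otimes \sigma_3$ for the PRV embedding. The canonical isomorphism
\[
\Hom_{K^\circ}(\sigma_2,\, \sigma_1^\vee \otimes \sigma_3) \cong \Hom_{K^\circ}(\sigma_1 \otimes \sigma_2,\, \sigma_3)
\]
sends $\iota$ to a nonzero $K^\circ$-equivariant map $\sigma_1 \otimes \sigma_2 \to \sigma_3$; by the multiplicity-one part of Lemma \ref{cartanc} this is a nonzero scalar multiple of the Cartan projection $\pi \colon \sigma_1 \otimes \sigma_2 \to \sigma_3$. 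Explicitly, if $\iota(y) = \sum_i f_i \otimes v_i$ then $\pi(u \otimes y) = \sum_i f_i(u) v_i$. Combining this with the formula for $\ell$,
\[
\ell(\iota(y)) \;=\; \sum_i f_i(u)\, w^*(v_i) \;=\; w^*\!\bigl(\pi(u \otimes y)\bigr).
\]

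To finish, I would pick a nonzero $y \in (\sigma_2)^{C^\circ}$ (which exists by hypothesis). Then $u \otimes y$ is a nonzero decomposable vector in $\sigma_1 \otimes \sigma_2$, so Lemma \ref{cprod} gives $\pi(u \otimes y) \neq 0$. Since $\pi$ is $K^\circ$-equivariant and $u \otimes y$ is diagonally $C^\circ$-invariant, $\pi(u \otimes y) \in (\sigma_3)^{C^\circ} = \C w$, hence is a nonzero multiple of $w$, on which $w^*$ does not vanish. Therefore $\ell(\iota(y)) \neq 0$. The only conceptual step is the adjunction identification of $\iota$ with (the transpose of) the Cartan projection; once that is in place, the non-vanishing is an immediate application of Lemma \ref{cprod} to the decomposable vector $u \otimes y$.
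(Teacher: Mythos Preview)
Your proof is correct and takes a genuinely different, more streamlined route than the paper.

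The paper decomposes each $\sigma_i$ as $\alpha_i\otimes\beta_i$ with $\alpha_i$ an $\SO(n)$-representation and $\beta_i$ an $\SO(n-1)$-representation, then sets up a commutative diagram identifying the relevant functional with a composite
\[
\beta_3^\vee \xrightarrow{f'} \beta_1^\vee\otimes\beta_2^\vee \xrightarrow{\phi_1\otimes v_2} \alpha_1\otimes\alpha_2 \xrightarrow{f} \alpha_3,
\]
and applies Lemma~\ref{cprod} to the image of a \emph{lowest weight vector} of $\beta_3^\vee$, which is decomposable in $\beta_1^\vee\otimes\beta_2^\vee$ by Lemma~\ref{cartanc}. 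Your approach bypasses the $\alpha/\beta$ factorization entirely: you use tensor--hom adjunction to identify the PRV inclusion $\sigma_2\hookrightarrow\sigma_1^\vee\otimes\sigma_3$ with (a scalar multiple of) the Cartan projection $\sigma_1\otimes\sigma_2\to\sigma_3$, and then apply Lemma~\ref{cprod} directly to the decomposable vector $u\otimes y$ built from the $C^\circ$-invariants. The paper's decomposable vector comes from a lowest weight vector; yours comes from $C^\circ$-fixed vectors---both feed into Lemma~\ref{cprod}, but your reduction is shorter and more conceptual, and makes transparent that the only structural input beyond multiplicity one is that the Cartan projection never kills a pure tensor. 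One small wording point: when you write ``$\pi(u\otimes y)=\sum_i f_i(u)v_i$'' you are really defining $\pi$ to be the adjoint of $\iota$ (which is fine, since Lemma~\ref{cprod} applies to any nonzero $K^\circ$-map onto the Cartan component); it would be slightly cleaner to say so explicitly rather than first introducing ``the'' Cartan projection and then equating it with this formula.
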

 \begin{proof}
 Write $\sigma_i=\alpha_i\otimes \beta_i$, where $\alpha_i$ is an irreducible representation of $\SO(n)$, and $\beta_i$ is an irreducible representations of $\SO(n-1)$ ($i=1,2,3$). Then $\alpha_3$ is the Cartan product of $\alpha_1$ and $\alpha_2$, and $\beta_3$ is the Cartan product of $\beta_1$ and $\beta_2$.

By Lemma \ref{mulone2}, every nonzero $C^\circ \times C^\circ$-invariant linear functional on $\sigma_1^\vee\otimes \sigma_3$ is of the form
$\phi_1\otimes \phi_3$, where $\phi_1$ is a nonzero $C^\circ$-invariant linear functional on $\sigma_1^\vee$, and $\phi_3$ is a  nonzero $C^\circ$-invariant linear functional on $\sigma_3$.

Fix a generator $v_2\in (\alpha_2\otimes \beta_2)^{C^\circ}$. It is routine to  check that the following diagram commutes:
 \[
 \begin{CD}
            \Hom_{\SO(n)\times \SO(n-1)}(\alpha_2\otimes \beta_2, \alpha_1^\vee\otimes \beta_1^\vee\otimes \alpha_3\otimes \beta_3) @>f\mapsto ((\phi_1\otimes \phi_3)\circ f)(v_2) >> \C \\
            @VV \cong V           @VV = V\\
           \Hom_{\SO(n-1)}(\beta_3^\vee, \beta_1^\vee\otimes \beta_2^\vee)\otimes \Hom_{\SO(n)}(\alpha_1\otimes \alpha_2, \alpha_3)  @>f'\otimes f\mapsto \phi_3(f\circ (\phi_1\otimes v_2)\circ f')>> \C,\\
  \end{CD}
\]
where the left vertical arrow is the canonical isomorphism, and in the bottom horizontal arrow, we view
\begin{eqnarray*}
   &&\phi_1\in \Hom_{\SO(n-1)}(\beta_1^\vee,\alpha_1)=\Hom_{C^\circ}(\alpha_1^\vee\otimes \beta_1^\vee, \C), \smallskip \\
   && v_2\in \Hom_{\SO(n-1)}(\beta_2^\vee,\alpha_2)=(\alpha_2\otimes \beta_2)^{C^\circ},\,\,\textrm{and}\smallskip  \\
   && f\circ (\phi_1\otimes v_2)\circ f'\in \Hom_{\SO(n-1)}(\beta_3^\vee,\alpha_3)=(\alpha_3\otimes \beta_3)^{C^\circ}.
\end{eqnarray*}

In order to prove the proposition, it suffices to show that the top horizontal arrow of the diagram is nonzero, or equivalently, it suffices to show that the bottom horizontal arrow is nonzero.

Note that $\beta_3^\vee$ is the Cartan product of $\beta_1^\vee$ and $\beta_2^\vee$. Pick a generator
\[
  f'_0\otimes f_0\in  \Hom_{\SO(n-1)}(\beta_3^\vee, \beta_1^\vee\otimes \beta_2^\vee)\otimes \Hom_{\SO(n)}(\alpha_1\otimes \alpha_2, \alpha_3).
\]
Let $u_3'$ be a nonzero lowest weight vector in $\beta_3^\vee$.  By Lemma \ref{cartanc}, $f_0'(u_3')$ is a nonzero decomposable vector in $\beta_1^\vee\otimes \beta_2^\vee$. Consequently, $((\phi_1\otimes v_2)\circ f_0')(u_3')$ is a nonzero decomposable vector in $\alpha_1\otimes \alpha_2$. Then Lemma \ref{cprod} implies that
\[
  (f_0\circ(\phi_1\otimes v_2)\circ f_0')(u_3')\neq 0,
\]
and consequently, $f_0\circ(\phi_1\otimes v_2)\circ f_0'$ is a generator of the one dimensional space
\[
\Hom_{\SO(n-1)}(\beta_3^\vee,\alpha_3)=(\alpha_3\otimes \beta_3)^{C^\circ}.
\]
This shows that the bottom horizontal arrow is nonzero since $\phi_3$ does not vanish on $(\alpha_3\otimes \beta_3)^{C^\circ}$.
 \end{proof}

\section{Cohomological representations}\label{cohom}

\subsection{Cohomological inductions}\label{coh0}

Recall from Section \ref{str} the Borel subalgebra
\[
  \b_n=\t_n\ltimes \n_n\subset \g_n.
\]
Write $\oT_n(\C)$ for the Cartan subgroup of $\GL_n(\C)$ with Lie algebra $\t_n$. Recall that $2\rho_n\in \t_n^*$ denotes the sum of all weights of $\n_n$. Let $\mu$ be as in the Introduction, which is assumed to be pure as in \eqref{puremu}. Write $\C_{\abs{\mu}+2\rho_n}$ for the one dimensional algebraic representation of $\oT_n(\C)$ with weight $\abs{\mu}+2\rho_n\in \t_n^*$  (see \eqref{dlambda}).  We also view it as a $\overline{\b_n}$-module  via the quotient map
\[
  \overline{\b_n}=\t_n\ltimes \overline{\n_n}\rightarrow \t_n.
\]

Put
\[
  \oT_n^\mathrm c:=\oO(n)\cap \oT_n(\C).
\]
Then
\begin{equation}\label{vmu}
  V_\mu:=\mathcal{U}(\g_n)\otimes_{\mathcal{U}(\overline{\b_n} )} \C_{\abs{\mu}+2\rho_n}
  \end{equation}
  is a $(\g_n, \oT_n^{\mathrm c})$-module, where $\g_n$ acts by left multiplications,
and $\oT_n^{\mathrm c}$ acts by the tensor product of its adjoint action on
$\mathcal{U}(\g_n)$ and the restriction of the $\oT_n(\C)$-action on $\C_{\abs{\mu}+2\rho_n}$. By \cite[Theorem 7.6.24]{Di} or \cite[Corollary 5.105]{KV}, we know that $V_\mu$ is irreducible as a $\g_n$-module.

Denote by $\Pi$ the Bernstein functor (see \cite[Page 196]{KV}) from the category of $(\g_n, \oT_n^{\mathrm c})$-modules to the category of $(\g_n, \oO(n))$-modules. Recall that for every $(\g_n, \oT_n^{\mathrm c})$-module $M$,
\be\label{functorpi}
  \Pi(M)=\mathcal{H}(\g_n, \oO(n))\otimes_{\mathcal{H}(\g_n,  \oT_n^{\mathrm c})} M,
\ee
where $\mathcal H$ indicates the Hecke algebra. The reader is referred to \cite[Chapter I]{KV} for details on Hecke algebras. Write $\Pi_i$ for the $i$-th left derived functor of $\Pi$ ($i\in \Z$). Then by \cite[Theorems 5.35 and 5.99]{KV},
\[
  \Pi_i(V_\mu)=0\quad\textrm{unless}\quad i=S_n:=\dim \n_n^{\mathrm c}=\left\{
         \begin{array}{ll}
           \frac{n(n-2)}{4}, & \hbox{if $n$ is even;} \\
           \frac{(n-1)^2}{4}, & \hbox{if $n$ is odd,}
         \end{array}
       \right.
\]
and by \cite[Corollary 8.28]{KV},
\[
 W_\mu:=\Pi_{S_n}(V_\mu)
\]
is an irreducible $(\g_n, \oO(n))$-module.

Denote by $W_\mu^\infty$ the Casselman-Wallach smooth globalization of $W_\mu$, namely, it is the Casselman-Wallach representation of $\GL_n(\R)$ whose space of $\oO(n)$-finite vectors equals $W_\mu$ as a $(\g_n, \oO(n))$-module.

\begin{lem}\label{temp}
The representation $W_\mu^\infty|_{\SL_n^{\pm}(\R)}$ is unitarizable and tempered.
\end{lem}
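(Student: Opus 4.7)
The plan is to realize $W_\mu$ as a cohomologically induced module from the $\theta$-stable Borel $\overline{\b_n}\subset\g_n$, and then invoke Wallach's Theorem 6.8.1 in \cite{Wa1}, which for such data produces a representation that is tempered and unitarizable after restriction to the appropriate semisimple subgroup. In the notation of \eqref{vmu}, $V_\mu$ is the generalized $(\g_n,\oT_n^{\mathrm c})$-Verma module induced from the one-dimensional $\overline{\b_n}$-character $\C_{\abs{\mu}+2\rho_n}$, and $\Pi_{S_n}(V_\mu)$ is then the degree-$S_n$ cohomological induction attached to the pair $(\overline{\b_n},\oT_n^{\mathrm c})$. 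The $2\rho_n$-shift built into the definition of $V_\mu$ is exactly the $\rho$-shift that translates the Bernstein functor picture into the normalized cohomological induction of \cite{KV}, giving infinitesimal character $\abs{\mu}+\rho_n$.

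To apply \cite[Theorem 6.8.1]{Wa1} one must verify: (i) $\overline{\b_n}$ is a $\theta_n$-stable Borel, already recorded in Section \ref{str} via $\theta_n(\b_n)=\b_n$ and $\b_n\cap\overline{\b_n}=\t_n$; (ii) the inducing parameter is in the good range, i.e., $\abs{\mu}+\rho_n$ is $\b_n$-dominant and regular, which is immediate from the dominance of $\abs{\mu}$ (see \eqref{dlambda}) and the fact that $\rho_n$ is strictly dominant; and (iii) after dividing out the center $\R^\times\cdot I$ of $\GL_n(\R)$, the restriction of the inducing character to the fundamental Cartan of the derived group lies on the unitary axis. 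For (iii), the central part of $\abs{\mu}+2\rho_n$ contributes only an $\abs{\det}^c$-type twist which disappears on restriction to $\SL_n^\pm(\R)$, and the remaining piece is controlled by the $\theta$-stability of $\b_n$, which pairs noncompact roots so that $2\rho_n$ has the required imaginary behaviour on the noncompact directions of the fundamental Cartan modulo center. Wallach's theorem then yields that $W_\mu^\infty|_{\SL_n^\pm(\R)}$ is irreducible, unitarizable, and tempered.

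The main obstacle will be the bookkeeping between the Bernstein functor formulation used in the excerpt and the normalized cohomological induction of \cite{Wa1}, and in particular ensuring that the $\rho$-shift is absorbed on the correct side so that the dominance hypothesis of Wallach's theorem coincides with the dominance of $\abs{\mu}+\rho_n$, and that the decomposition of $\abs{\mu}+2\rho_n$ into a central character and a unitary part on the fundamental Cartan of the derived subgroup gives exactly the unitary-axis data required.
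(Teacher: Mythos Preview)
The paper gives no proof of this lemma beyond the parenthetical citation to \cite[Theorem 6.8.1]{Wa1}; your proposal is precisely to verify that the hypotheses of that theorem hold for the data $(\overline{\b_n},\,\C_{\abs{\mu}+2\rho_n})$, so the approach is the same, only spelled out in more detail than the paper bothers with. Your checklist (i)--(iii) is the right one, and the dominance of $\abs{\mu}+\rho_n$ together with the $\theta$-stability recorded in Section~\ref{str} are exactly the inputs needed.
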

\begin{proof}
See \cite[Theorem 6.8.1]{Wa1}, or \cite[Theorem 9.1 and Corollary 11.229]{KV}. Unitarizability of the derived functor modules is first proved by Vogan in \cite{V}, and a simpler proof is given by Wallach in \cite{Wa3}.
\end{proof}

\subsection{Restricting to $\GL_n^+(\R)$} Denote by $\GL_n^+(\R)$ the subgroup of $\GL_n(\R)$ consisting of matrices of positive determinants. Then $\SO(n)$ is a maximal compact subgroup of it. Write
 \[
   V_\mu^\circ:=V_\mu=\mathcal{U}(\g_n)\otimes_{\mathcal{U}(\overline{\b_n} )} \C_{\abs{\mu}+2\rho_n},
 \]
 to be viewed as a $(\g_n, {\oT_n^{\mathrm c}}^\circ)$-module.

 Denote by $\Pi^{\circ}$ the Bernstein functor from the category of $(\g_n, {\oT_n^{\mathrm c}}^\circ)$-modules to the category of $(\g_n, \SO(n))$-modules. Write $\Pi^{\circ}_i$ for its $i$-th left derived functor ($i\in \Z$). As in Section \ref{coh0},
 \[
  \Pi_i^\circ(V_\mu^\circ)=0\quad\textrm{unless}\quad i=S_n,
\]
and
\[
 W_\mu^\circ:=\Pi^\circ_{S_n}(V^\circ_\mu)
\]
is an irreducible $(\g_n, \SO(n))$-module.

Since
\[
  \mathcal{H}(\g_n, \SO(n))\subset \mathcal{H}(\g_n, \oO(n)) \quad\textrm{and}\quad \mathcal H(\overline{\b_n}, {\oT_n^{\mathrm c}}^\circ)\subset\mathcal{H}(\overline{\b_n}, \oT_n^{\mathrm c}),
\]
by passing to homology, the identity map
\[
  V^\circ_\mu\rightarrow V_\mu
\]
induces a $(\g_n, \SO(n))$-module homomorphism
\begin{equation}\label{glp}
   W^\circ_\mu\rightarrow W_\mu.
\end{equation}

\begin{lem}\label{rglp}
If $n$ is odd, then \eqref{glp} is an isomorphism. If $n$ is even, then \eqref{glp} is injective and induces a $(\g_n, \oO(n))$-module isomorphism
\begin{equation}\label{glp2}
  \mathcal{H}(\g_n, \oO(n)) \otimes_{\mathcal{H}(\g_n, \SO(n)) }  W^\circ_\mu\cong  W_\mu.
\end{equation}
\end{lem}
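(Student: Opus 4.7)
The plan is to analyze the two Hecke-algebra inclusions $\mathcal{H}(\g_n,\SO(n)) \hookrightarrow \mathcal{H}(\g_n,\oO(n))$ and $\mathcal{H}(\overline{\b_n},{\oT_n^{\mathrm c}}^\circ) \hookrightarrow \mathcal{H}(\overline{\b_n},\oT_n^{\mathrm c})$ that induce \eqref{glp}, and to use their explicit structure, which depends sharply on the parity of $n$, to compare $\Pi^\circ_{S_n}(V_\mu^\circ)$ with $\Pi_{S_n}(V_\mu)$ as $(\g_n,\SO(n))$-modules.

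For $n$ odd I would first verify that $-I_n \in \oT_n^{\mathrm c}$ (it lies in $\exp(\t_n)\cap \oO(n)$) but $-I_n \notin \SO(n)$, hence $-I_n \notin {\oT_n^{\mathrm c}}^\circ$. Since $-I_n$ is central, this yields internal direct-product decompositions $\oO(n) = \SO(n) \times \{\pm I_n\}$ and $\oT_n^{\mathrm c} = {\oT_n^{\mathrm c}}^\circ \times \{\pm I_n\}$, and hence tensor factorizations
\[
\mathcal{H}(\g_n,\oO(n)) \cong \mathcal{H}(\g_n,\SO(n))\otimes_\C \C[\{\pm I_n\}],\quad \mathcal{H}(\overline{\b_n},\oT_n^{\mathrm c}) \cong \mathcal{H}(\overline{\b_n},{\oT_n^{\mathrm c}}^\circ)\otimes_\C\C[\{\pm I_n\}].
\]
Because $V_\mu$ is a $\C[\{\pm I_n\}]$-module via a character and $\C[\{\pm I_n\}]$ is semisimple, the $\C[\{\pm I_n\}]$-factor cancels in the derived tensor product defining $\Pi_{S_n}(V_\mu)$, reducing it to the one defining $\Pi^\circ_{S_n}(V_\mu^\circ)$. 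Tracking this through the construction of \eqref{glp}, the resulting identification of $(\g_n,\SO(n))$-modules is precisely \eqref{glp}, which is therefore an isomorphism.

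For $n$ even I would observe that $\oT_n^{\mathrm c}$ is a connected torus sitting in $\SO(n)$, essentially because each $2\times 2$-block in $\exp\gamma_n$ that is orthogonal is automatically in $SO(2)$; hence ${\oT_n^{\mathrm c}}^\circ = \oT_n^{\mathrm c}$ and the two lower Hecke algebras coincide. Picking $\epsilon \in \oO(n)$ with $\det\epsilon=-1$, the coset decomposition $\oO(n)=\SO(n)\sqcup\SO(n)\cdot\epsilon$ gives
\[
\mathcal{H}(\g_n,\oO(n)) = \mathcal{H}(\g_n,\SO(n))\oplus \mathcal{H}(\g_n,\SO(n))\cdot\epsilon
\]
as right $\mathcal{H}(\g_n,\SO(n))$-modules, exhibiting $\mathcal{H}(\g_n,\oO(n))$ as a free right module of rank $2$ over $\mathcal{H}(\g_n,\SO(n))$. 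Consequently the change-of-rings functor $\mathcal{H}(\g_n,\oO(n))\otimes_{\mathcal{H}(\g_n,\SO(n))}(-)$ is exact, commutes with derived tensor products, and transitivity yields
\[
W_\mu = \Pi_{S_n}(V_\mu) \cong \mathcal{H}(\g_n,\oO(n))\otimes_{\mathcal{H}(\g_n,\SO(n))} \Pi^\circ_{S_n}(V_\mu^\circ) = \mathcal{H}(\g_n,\oO(n))\otimes_{\mathcal{H}(\g_n,\SO(n))} W_\mu^\circ,
\]
which is exactly \eqref{glp2}. Under this identification \eqref{glp} becomes the map $w\mapsto 1\otimes w$, injective because $1\otimes W_\mu^\circ$ is a direct $\mathcal{H}(\g_n,\SO(n))$-module summand of the target.

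The main obstacle will be the bookkeeping inside the Hecke-algebra formalism of \cite[Chapter I]{KV} needed to make the cancellation of the $\C[\{\pm I_n\}]$-factor (odd case) and the transitivity of derived tensor products (even case) rigorous. Both rest on freeness of $\mathcal{H}(\g_n,\oO(n))$ over $\mathcal{H}(\g_n,\SO(n))$, and analogously of $\mathcal{H}(\overline{\b_n},\oT_n^{\mathrm c})$ over $\mathcal{H}(\overline{\b_n},{\oT_n^{\mathrm c}}^\circ)$ in the odd case, which should be straightforward consequences of the coset decompositions recorded above.
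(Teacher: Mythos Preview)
Your proposal is correct and follows essentially the same approach as the paper: for $n$ odd you use the tensor factorizations $\mathcal{H}(\g_n,\oO(n))\cong\mathcal{H}(\g_n,\SO(n))\otimes\mathcal{H}(\{\pm 1\})$ and $\mathcal{H}(\overline{\b_n},\oT_n^{\mathrm c})\cong\mathcal{H}(\overline{\b_n},{\oT_n^{\mathrm c}}^\circ)\otimes\mathcal{H}(\{\pm 1\})$, and for $n$ even you use ${\oT_n^{\mathrm c}}^\circ=\oT_n^{\mathrm c}$ together with induction by stages. The paper records exactly these two observations and nothing more; your write-up simply spells out the Hecke-algebra bookkeeping in greater detail.
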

 \begin{proof}
 If $n$ is odd, then \eqref{glp} is an isomorphism since
 \[
   \mathcal{H}(\g_n, \oO(n))=\mathcal{H}(\g_n, \SO(n))\otimes \mathcal H(\{\pm 1\})
 \]
 and
 \[
  \mathcal H(\overline{\b_n}, \oT_n^{\mathrm c})=\mathcal{H}(\overline{\b_n}, {\oT_n^{\mathrm c}}^\circ)\otimes \mathcal H(\{\pm 1\}).
 \]
 If $n$ is even, then \eqref{glp2} holds by induction by steps since ${\oT_n^{\mathrm c}}^\circ=\oT_n^{\mathrm c}$.

 \end{proof}

 In all cases, we view $W_\mu^\circ$ as a $(\g_n, \SO(n))$-submodule of $W_\mu$ through the embedding \eqref{glp}.

\subsection{Bottom layers}\label{bl}

 Denote by $\Pi^{\mathrm c}$ the Bernstein functor from the category of $(\o_n, {\oT_n^{\mathrm c}}^\circ)$-modules to the category of $(\o_n, \SO(n))$-modules. Write $\Pi^{\mathrm c}_i$ for its $i$-th left derived functor ($i\in \Z$). Similar to $V_\mu$, we have an $(\o_n, {\oT_n^{\mathrm c}}^\circ)$-module
\[
  V^{\mathrm c}_\mu:=\mathcal{U}(\o_n)\otimes_{\mathcal{U}(\overline{\b_n^{\mathrm c}} )} \C_{[\abs{\mu}+2\rho_n]}.
  \]
Recall the irreducible representations $\tau_{\mu}$ and $\tau_n$ of $\SO(n)$ from Section \ref{fxi} and Section \ref{fxi2}, respectively. Define
 \[
    \tau_{\mu}^+:=\textrm{the Cartan product of $\tau_{\mu}$ and $\tau_n$.}
      \]
Then the algebraic version of the Borel-Bott-Weil theorem \cite[Corollary 4.160]{KV} implies that
\[
  \Pi_i^{\mathrm c}(V^{\mathrm c}_\mu)\cong\left\{
         \begin{array}{ll}
           0, & \hbox{if $i\neq S_n$;} \\
           \tau_\mu^+, & \hbox{if $i=S_n$,}
         \end{array}
       \right.
\]
Put
\[
 W^{\mathrm c}_\mu:=\Pi^{\mathrm c}_{S_n}(V^{\mathrm c}_\mu)\cong \tau_\mu^+.
\]

Since
\[
  \mathcal{H}(\o_n, \SO(n))\subset \mathcal{H}(\g_n, \SO(n)) \quad\textrm{and}\quad \mathcal H(\overline{\b_n^{\mathrm c}}, {\oT_n^{\mathrm c}}^\circ)\subset\mathcal{H}(\overline{\b_n}, {\oT_n^{\mathrm c}}^\circ),
\]
by passing to homology, the inclusion
\[
  \beta_\mu: V^{\mathrm c}_\mu \rightarrow V_\mu^\circ
\]
 induces an $\SO(n)$-equivariant linear map
\[
  \mathcal B_\mu: W^{\mathrm c}_\mu\rightarrow W_\mu^\circ.
\]
The map $\mathcal B_\mu$ is injective and is called the Bottom layer map (see \cite[Section V.6]{KV}). By \cite[Theorem 5.80]{KV}, one knows that $\tau_\mu^+$ has multiplicity one in $W_\mu^\circ$, and by \cite[Proposition 10.24]{KV}, it is the unique minimal $\SO(n)$-type of $W_\mu^\circ$ (see \cite[Section X.2]{KV} for details on ``minimal $K$-types"). When $n$ is even, the unique minimal $\oO(n)$-type of $W_\mu$ is the irreducible representation
\begin{equation}\label{mo}
  \mathcal H(\oO(n))\otimes_{\mathcal H(\SO(n))} \tau_\mu^+.
\end{equation}
When restricted to $\SO(n)$,  \eqref{mo} is the direct sum $\tau_\mu^+\oplus ({\tau_\mu^+})'$ of two inequivalent irreducible representations of $\SO(n)$, where $({\tau_\mu^+})'$ denotes the twist of the representation $\tau_\mu^+$ by an outer automorphism $x\mapsto gxg^{-1}$ of $\SO(n)$, with $g\in \oO(n)\setminus \SO(n)$.

Together with Lemma \ref{rglp}, the above argument implies the following lemma.
\begin{lem}\label{mink}
The irreducible representation $\tau_\mu^+$ of $\SO(n)$ occurs with multiplicity one in $W_\mu$.
\end{lem}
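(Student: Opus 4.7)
The plan is to distinguish by the parity of $n$ and to leverage Lemma \ref{rglp}.

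When $n$ is odd, Lemma \ref{rglp} provides a $(\g_n,\SO(n))$-module isomorphism $W_\mu^\circ \cong W_\mu$. Since $\tau_\mu^+$ has multiplicity one in $W_\mu^\circ$ (by \cite[Theorem 5.80]{KV}, as recorded in the paragraph preceding the lemma), the same conclusion transports immediately to $W_\mu$.

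When $n$ is even, the plan is to work directly with $W_\mu = \Pi_{S_n}(V_\mu)$ viewed as a cohomologically induced $(\g_n,\oO(n))$-module in its own right. The text already identifies \eqref{mo} as the unique minimal $\oO(n)$-type of $W_\mu$; re-invoking \cite[Theorem 5.80]{KV} in this setting, with the disconnected compact group $K = \oO(n)$, yields that \eqref{mo} occurs in $W_\mu$ with multiplicity one. Since $\tau_\mu^+$ and $(\tau_\mu^+)^\vee$ are inequivalent as $\SO(n)$-modules, $\tau_\mu^+$ does not extend to an irreducible representation of $\oO(n)$; hence any irreducible $\oO(n)$-subrepresentation of $W_\mu$ whose restriction to $\SO(n)$ contains $\tau_\mu^+$ must restrict to the direct sum $\tau_\mu^+ \oplus (\tau_\mu^+)^\vee$, and must therefore be isomorphic to \eqref{mo}. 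It follows that the $\SO(n)$-multiplicity of $\tau_\mu^+$ in $W_\mu$ equals the $\oO(n)$-multiplicity of \eqref{mo} in $W_\mu$, which is one.

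The main obstacle will be justifying the application of \cite[Theorem 5.80]{KV} to the disconnected compact group $\oO(n)$, and checking that the minimal $\oO(n)$-type it produces is indeed \eqref{mo}. The Knapp-Vogan framework accommodates general reductive compact subgroups, and the construction of $V_\mu$ in \eqref{vmu} has been set up relative to the full Cartan $\oT_n^{\mathrm c}$ of $\oO(n)$ rather than its identity component, so the hypotheses should be in place; the identification of the minimal $\oO(n)$-type with \eqref{mo} can then be carried out by tracking the bottom-layer map $\mathcal{B}_\mu$ through the induction $\mathcal{H}(\g_n,\oO(n))\otimes_{\mathcal{H}(\g_n,\SO(n))}(-)$ of Lemma \ref{rglp}. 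A backup route, avoiding this subtlety, is to observe that Lemma \ref{rglp} gives $W_\mu|_{\SO(n)} \cong W_\mu^\circ \oplus {W_\mu^\circ}'$, where ${W_\mu^\circ}'$ denotes the conjugate of $W_\mu^\circ$ by an element of $\oO(n)\setminus\SO(n)$; the desired multiplicity then reduces to $1+[(\tau_\mu^+)^\vee : W_\mu^\circ]$, and the vanishing of the second term can alternatively be established by a Blattner-style $\SO(n)$-type analysis of $W_\mu^\circ$.
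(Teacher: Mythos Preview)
Your proposal is correct and essentially matches the paper's (very terse) argument. For $n$ odd you both invoke the isomorphism of Lemma \ref{rglp} directly. For $n$ even, the paper's one-line justification ``Together with Lemma \ref{rglp}, the above arguments imply the following'' is most naturally unpacked as your backup route: decompose $W_\mu|_{\SO(n)} \cong W_\mu^\circ \oplus (W_\mu^\circ)'$ via Lemma \ref{rglp} and show $[(\tau_\mu^+)^\vee : W_\mu^\circ]=0$. The paper has already set this up without needing a Blattner computation: since $\tau_\mu^+$ is the \emph{unique} minimal $\SO(n)$-type of $W_\mu^\circ$ (by \cite[Proposition 10.24]{KV}, recorded just before the lemma) and since the outer conjugate $(\tau_\mu^+)^\vee$ carries the same Vogan norm but is inequivalent to $\tau_\mu^+$, it cannot occur in $W_\mu^\circ$ at all.

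Your primary route---reapplying \cite[Theorem 5.80]{KV} with $K=\oO(n)$ to get $[\,\eqref{mo}:W_\mu\,]_{\oO(n)}=1$ and then invoking Clifford theory---is a legitimate and slightly cleaner alternative; the Knapp--Vogan framework is built for disconnected $K$, and the paper's construction of $V_\mu$ in \eqref{vmu} already uses the full $\oT_n^{\mathrm c}$, so the hypotheses are in place. Both approaches land on the same conclusion with comparable effort.
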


\subsection{Nonvanishing of cohomology}\label{nonvc}

Note that the center $\R^\times$ of $\GL_n(\R)$ acts trivially on $F_\mu^\vee\otimes W_\mu^\infty$. Therefore
\begin{eqnarray*}
   && \oH^{b_n}(\g_n,\GO(n)^\circ; F_\mu^\vee\otimes W_\mu^\infty) \\
   &=& \oH^{b_n}(\sl_{n}(\C),\SO(n); F_\mu^\vee\otimes W_\mu^\infty)  \\
   &=& \Hom_{\SO(n)}(\wedge^{b_n}(\s\l_n(\C)/\s\o_n(\C)),  F_\mu^\vee\otimes W_\mu^\infty)  \qquad \textrm{by \cite[Proposition 9.4.3]{Wa1}} \\
   &=& \Hom_{\SO(n)}(\wedge^{b_n}(\g_n/\g\o_n),  F_\mu^\vee \otimes W_\mu^\infty) \\
             &\supset & \Hom_{\SO(n)}(\wedge^{b_n}(\g_n/\g\o_n), \tau_{\mu}^\vee\otimes \tau_\mu^+)\qquad \textrm{by Lemma \ref{lweight} and Lemma \ref{mink}}\\
        &\supset & \Hom_{\SO(n)}(\tau_n, \tau_{\mu}^\vee\otimes \tau_\mu^+) \qquad \textrm{ by  Lemma \ref{ngln1}}.
\end{eqnarray*}
The last hom space is one dimensional by Lemma \ref{prvc}. Together with Lemma \ref{temp}, this shows that $W_\mu^\infty$ is a representation in $\Omega(\mu)$. Consequently,
\begin{equation}\label{omegamu}
  \Omega(\mu)=\left\{
                \begin{array}{ll}
                  \{W_\mu^\infty\}, & \hbox{if $n$ is even;} \smallskip \\
                \{W_\mu^\infty, W_\mu^\infty\otimes \sgn\}, & \hbox{if $n$ is odd.}
                \end{array}
              \right.
\end{equation}

\subsection{Translations}

Put
\begin{equation}\label{mut}
  \tilde \mu:=(\mu_1-\mu_n, \mu_2-\mu_{n-1},\cdots, \mu_n-\mu_1)
\end{equation}
as in the Introduction so that $F_{\tilde \mu}$ is the Cartan product of $F_\mu$ and $F_\mu^\vee$. Applying  the previous argument to $\tilde \mu$, we get a representation $\C_{\abs{\tilde \mu}+2\rho_n}$ of $\oT_n(\C)$, and injective maps
\[
  \beta_{\tilde \mu}: V^{\mathrm c}_{\tilde \mu}\rightarrow V_{\tilde \mu}^\circ= V_{\tilde \mu}\quad\textrm{and}\quad \mathcal B_{\tilde \mu}: \tau_{\tilde \mu}^+\cong W^{\mathrm c}_{\tilde \mu}\rightarrow W_{\tilde \mu}^\circ\subset W_{\tilde \mu}.
\]

\begin{lem}\label{kappa}
Up to scalar multiplication, there is a unique nonzero $(\g_n, \oT_n^{\mathrm c})$-module homomorphism
\[
  \kappa: V_{\mu}\rightarrow F_\mu\otimes V_{\tilde \mu}.
\]
Moreover, $\kappa$ is injective and its image is a direct summand as a $(\g_n, \oT_n^{\mathrm c})$-submodule of $F_\mu\otimes V_{\tilde \mu}$.
\end{lem}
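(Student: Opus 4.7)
The plan is to use Frobenius reciprocity to identify $\kappa$ with an explicit one-dimensional weight space inside $F_\mu\otimes V_{\tilde\mu}$, and then to promote the resulting embedding to a direct summand by decomposing $F_\mu\otimes V_{\tilde\mu}$ into generalized eigenspaces for the center $Z(\mathcal{U}(\g_n))$. The regular dominance of $\abs{\mu}+\rho_n$, already used in the text for the irreducibility of $V_\mu$, is the key input throughout.

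Since $V_\mu$ is induced from the one-dimensional $(\overline{\b_n},\oT_n^{\mathrm c})$-module $\C_{\abs{\mu}+2\rho_n}$, Frobenius reciprocity identifies
\[
\Hom_{(\g_n,\oT_n^{\mathrm c})}(V_\mu,\,F_\mu\otimes V_{\tilde\mu})
\;\cong\;
(F_\mu\otimes V_{\tilde\mu})^{\overline{\n_n}}_{\abs{\mu}+2\rho_n},
\]
the vectors of $\t_n$-weight $\abs{\mu}+2\rho_n$ annihilated by $\overline{\n_n}$; matching of $\oT_n^{\mathrm c}$-characters is automatic because $\oT_n^{\mathrm c}\subset\oT_n(\C)$ acts on any integral $\t_n$-weight vector through the corresponding algebraic character. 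By the PBW theorem every $\t_n$-weight of $V_{\tilde\mu}$ equals $\abs{\tilde\mu}+2\rho_n$ plus a nonnegative integer combination of positive roots, and every weight $\nu$ of $F_\mu$ satisfies $\nu\geq-\abs{-\mu}$ in the root partial order, with the lowest-weight line realising the extreme with multiplicity one. Since $F_{\tilde\mu}$ is the Cartan product of $F_\mu$ and $F_\mu^\vee$, we have $\abs{\tilde\mu}=\abs{\mu}+\abs{-\mu}$; hence any decomposition $\nu+\abs{\tilde\mu}+2\rho_n+\eta=\abs{\mu}+2\rho_n$, with $\eta$ a sum of positive roots, reads $\nu+\abs{-\mu}=-\eta$, which is simultaneously $\geq 0$ and $\leq 0$, forcing $\eta=0$ and $\nu=-\abs{-\mu}$. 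The weight space is therefore one-dimensional, spanned by $v_F^-\otimes v_{\tilde\mu}$, where $v_F^-$ is a lowest-weight vector of $F_\mu$ and $v_{\tilde\mu}$ is the canonical generator of $V_{\tilde\mu}$. Existence and uniqueness of $\kappa$ follow, and injectivity is immediate from the irreducibility of $V_\mu$.

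For the direct summand claim, set $\lambda:=\abs{\mu}+\rho_n$. Combining the tensor identity
\[
F_\mu\otimes V_{\tilde\mu}\;\cong\;\Ind_{\overline{\b_n}}^{\g_n}\bigl(F_\mu|_{\overline{\b_n}}\otimes \C_{\abs{\tilde\mu}+2\rho_n}\bigr)
\]
with a filtration of $F_\mu|_{\overline{\b_n}}$ by $\b_n$-height produces a $(\g_n,\oT_n^{\mathrm c})$-filtration of $F_\mu\otimes V_{\tilde\mu}$ whose successive subquotients are Verma modules $V_{\abs{\tilde\mu}+2\rho_n+\nu}$, one for each weight $\nu$ of $F_\mu$ counted with multiplicity. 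The subquotient corresponding to $\nu$ has infinitesimal character given by the Weyl orbit of $\lambda+(\nu+\abs{-\mu})$. The same pair of opposing inequalities as above, $w\lambda\leq\lambda$ from dominance and $\nu+\abs{-\mu}\geq 0$ from the weight bound for $F_\mu$, forces any $w\in\oW_{\g_n}$ satisfying $w\lambda=\lambda+(\nu+\abs{-\mu})$ to have $w=e$ (by regularity) and $\nu=-\abs{-\mu}$. Thus $V_\mu$ is the unique Verma subquotient with infinitesimal character $\chi_\mu$. Only finitely many central characters occur in the filtration, so $Z(\mathcal{U}(\g_n))$ acts locally finitely on $F_\mu\otimes V_{\tilde\mu}$; the module splits as a direct sum of generalized $Z(\mathcal{U}(\g_n))$-eigenspaces, the $\chi_\mu$-eigenspace has composition length one and therefore coincides with $\kappa(V_\mu)$, and the splitting is $(\g_n,\oT_n^{\mathrm c})$-equivariant because $Z(\mathcal{U}(\g_n))$ commutes with the $\oT_n^{\mathrm c}$-action.

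The hard step is the direct-summand claim: the weight computation for existence and uniqueness is elementary combinatorics, but splitting off $\kappa(V_\mu)$ rests on the regularity of $\abs{\mu}+\rho_n$ (to rule out any spurious coincidences of central characters among the Verma subquotients) together with the general principle that Verma subquotients with distinct central characters in a finite filtration split off as distinct $Z(\mathcal{U}(\g_n))$-generalized eigenspaces.
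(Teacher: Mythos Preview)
Your proof is correct and follows essentially the same route as the paper's. Both arguments rest on the Mackey (tensor) identity $F_\mu\otimes V_{\tilde\mu}\cong\mathcal{U}(\g_n)\otimes_{\mathcal{U}(\overline{\b_n})}(F_\mu\otimes\C_{\abs{\tilde\mu}+2\rho_n})$, a filtration of $F_\mu$ by $\overline{\b_n}$-submodules producing Verma subquotients, the observation that exactly one subquotient carries the infinitesimal character of $V_\mu$, and the primary decomposition with respect to $\oZ(\g_n)$. The only cosmetic difference is that the paper reads off existence of $\kappa$ directly from the bottom step of the filtration (the lowest-weight line $F_1\subset F_\mu$ gives $V_1\cong V_\mu$ as an honest submodule), whereas you obtain existence and uniqueness separately via Frobenius reciprocity and a weight count; your more explicit infinitesimal-character computation using regularity of $\abs{\mu}+\rho_n$ just spells out what the paper calls ``easily verified''.
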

\begin{proof}
Note that both $\abs{\mu}+\rho_n$ and $\abs{\tilde \mu}+\rho_n$ are strictly dominant with respect to $\b_n$. The proof of Theorem 7.237 in \cite[Pages 545-546]{KV} shows that  $V_{\mu}$ is a direct summand of $F_\mu\otimes V_{\tilde \mu}$, and it occurs with  multiplicity one in the composition series of $F_\mu\otimes V_{\tilde \mu}$. This implies the lemma.

\end{proof}

The injective homomorphism $\kappa$ of Lemma  \ref{kappa} is given as follows:
\begin{eqnarray*}
  V_\mu &=& \mathcal{U}(\g_n)\otimes_{\mathcal{U}(\overline{\b_n} )} \C_{\abs{\mu}+2\rho_n} \\
   &\cong & \mathcal{U}(\g_n)\otimes_{\mathcal{U}(\overline{\b_n} )}( F_\mu^{\overline{\n_n}}\otimes \C_{\abs{\tilde \mu}+2\rho_n})\\
    &\hookrightarrow & \mathcal{U}(\g_n)\otimes_{\mathcal{U}(\overline{\b_n} )}( F_\mu\otimes \C_{\abs{\tilde \mu}+2\rho_n})\\
   &\stackrel{\textrm{Mackey isomorphism}}{\cong} &  F_\mu\otimes (\mathcal{U}(\g_n)\otimes_{\mathcal{U}(\overline{\b_n} )} \C_{\abs{\tilde \mu}+2\rho_n})\\
   &=& F_\mu\otimes V_{\tilde \mu}.
\end{eqnarray*}
The reader is referred to \cite[Theorem 2.103]{KV} for  the Mackey isomorphism.

Recall form Lemma \ref{lweight2} that $\tau_{-\mu}^\vee$ occurs in $F_\mu$ with multiplicity one. Write
$\iota_\mu: \tau_{-\mu}^\vee\rightarrow F_\mu$ for the inclusion map. Let $\kappa$ be as in Lemma \ref{kappa}. Similar to Lemma \ref{kappa}, we have an  $(\o_n, {\oT_n^{\mathrm c}}^\circ)$-module homomorphism
\[
  \kappa^{\mathrm c}: V_{\mu}^\mathrm{c}\rightarrow \tau_{-\mu}^\vee\otimes V^{\mathrm c}_{\tilde \mu},
\]
which is suitably normalized so that  the diagram
 \begin{equation}\label{cdk0}
 \begin{CD}
            V_\mu @>\kappa >> F_\mu\otimes V_{\tilde \mu} \\
            @A A \beta_\mu A           @AA\iota_\mu\otimes \beta_{\tilde \mu} A\\
           V_\mu^{\mathrm c} @>\kappa^{\mathrm c}>> \tau_{-\mu}^\vee\otimes V^{\mathrm c}_{\tilde \mu}\\
  \end{CD}
\end{equation}
commutes.

Recall the functor $\Pi$ of \eqref{functorpi}. We have a natural isomorphism (the Mackey isomorphism)
\be\label{mackey0}
  \Pi(F_\mu\otimes \,\cdot\,) \cong F_\mu\otimes\Pi(\,\cdot\,)
\ee
between two functors from the category of $(\g_n, \oT_n^{\mathrm c})$-modules to the category of $(\g_n, \oO(n))$-modules.
Since the functor $F_\mu\otimes (\,\cdot\,)$ from the category of $(\g_n, \oT_n^{\mathrm c})$-modules to itself is exact and maps projective objects to projective objects, the isomorphism \eqref{mackey0} induces an isomorphism
\be\label{mackey1}
   \Pi_j(F_\mu\otimes M) \stackrel{\textrm{Mackey}}{\cong} F_\mu\otimes\Pi_j(M),
\ee
for all $(\g_n, \oT_n^{\mathrm c})$-module $M$ and all $j\in \Z$. Similarly, we have an isomorphism
\be\label{mackey22}
   \Pi_j^{\mathrm c}(\tau_{-\mu}^\vee\otimes M^{\mathrm c}) \stackrel{\textrm{Mackey}}{\cong} \tau_{-\mu}^\vee\otimes\Pi_j(M^{\mathrm c}),
\ee
for all $(\o_n, {\oT_n^{\mathrm c}}^\circ)$-module $M^{\mathrm c}$ and all $j\in \Z$.

Applying the derived Bernstein functors to the diagram \eqref{cdk0}, and using the derived Mackey isomorphisms \eqref{mackey1} and \eqref{mackey22}, we get a commutative diagram
\[
 \begin{CD}
          W_\mu=\Pi_{S_n}(V_\mu) @>\Pi_{S_n}(\kappa)>> \Pi_{S_n}(F_\mu\otimes V_{\tilde \mu}) @>\textrm{Mackey} >\cong > F_\mu\otimes\Pi_{S_n}(V_{\tilde \mu})=F_\mu\otimes W_{\tilde \mu}\\
            @AA \mathcal B_\mu A           @AAA  @AA \iota_\mu\otimes \mathcal B_{\tilde \mu} A\\
          \tau_\mu^+\cong \Pi_{S_n}^{\mathrm c}(V_\mu^{\mathrm c}) @>\Pi_{S_n}^{\mathrm c}(\kappa^{\mathrm c})>> \Pi_{S_n}^{\mathrm c}(\tau_{-\mu}^\vee\otimes V^{\mathrm c}_{\tilde \mu}) @> \textrm{Mackey} >\cong>
  \tau_{-\mu}^\vee \otimes \Pi_{S_n}^{\mathrm c}(V^{\mathrm c}_{\tilde \mu})\cong \tau_{-\mu}^\vee\otimes \tau_{\tilde \mu}^+. \\
  \end{CD}
\]
Recall from Lemma \ref{kappa} that $\kappa$ is injective and its image is a direct summand in its range. This  implies that the map $\Pi_{S_n}(\kappa)$ is injective. Likewise, $\Pi^\mathrm{c}_{S_n}(\kappa^\mathrm{c})$ is also injective.

Note that by Lemma \ref{prvc}, $\tau_\mu^+$ is the PRV component of $\tau_{-\mu}^\vee\otimes \tau_{\tilde \mu}^+$.  In conclusion, we have proved the following proposition.
\begin{prpl}\label{transmu}
The irreducible $\SO(n)$-representation
 \[
   \tau_\mu^+\subset \tau_{-\mu}^\vee\otimes \tau_{\tilde \mu}^+
   \subset F_\mu\otimes W_{\tilde \mu}
 \]
 generates  an irreducible $(\g_n, \oO(n))$-submodule of  $F_\mu\otimes W_{\tilde \mu}$ which is  isomorphic to $W_\mu$.
\end{prpl}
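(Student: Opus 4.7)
The plan is to deduce the proposition directly from the two commutative diagrams displayed just before its statement, which arose by applying the derived Bernstein functors $\Pi_{S_n}$ and $\Pi^{\mathrm c}_{S_n}$ to the square \eqref{cdk0} and invoking the derived Mackey isomorphism. Essentially, these diagrams already exhibit $W_\mu$ as a subrepresentation of $F_\mu \otimes W_{\tilde\mu}$ via the translation map, and I need only identify where the minimal $\SO(n)$-type lands.

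First, I would establish that $\Pi_{S_n}(\kappa)\colon W_\mu \to F_\mu \otimes W_{\tilde\mu}$ is injective. This is immediate from Lemma \ref{kappa}: since $\kappa$ has image a direct summand of $F_\mu \otimes V_{\tilde\mu}$ as a $(\g_n, \oT_n^{\mathrm c})$-module, the splitting is preserved by the additive functor $\Pi_{S_n}$, and so the resulting map remains injective. The same argument on the compact side, applied to $\kappa^{\mathrm c}$, yields injectivity of $\Pi^{\mathrm c}_{S_n}(\kappa^{\mathrm c})\colon \tau_\mu^+ \to \tau_{-\mu}^\vee \otimes \tau_{\tilde\mu}^+$. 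Since $\tau_\mu^+$ is the PRV component of this tensor product (by Lemma \ref{prvc}, after noting that $\tau_{\tilde\mu}^+$ is the Cartan product of $\tau_{-\mu}$ and $\tau_\mu^+$), the injection $\Pi^{\mathrm c}_{S_n}(\kappa^{\mathrm c})$ identifies $\tau_\mu^+$ with the PRV copy.

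Second, I would run a short diagram chase. Commutativity says that the image of $\mathcal B_\mu(\tau_\mu^+) \subset W_\mu$ under $\Pi_{S_n}(\kappa)$ coincides with $(\iota_\mu \otimes \mathcal B_{\tilde\mu})(\Pi^{\mathrm c}_{S_n}(\kappa^{\mathrm c})(\tau_\mu^+))$, that is, precisely with the copy of $\tau_\mu^+ \subset \tau_{-\mu}^\vee \otimes \tau_{\tilde\mu}^+ \subset F_\mu \otimes W_{\tilde\mu}$ singled out in the proposition. Because $\Pi_{S_n}(\kappa)$ is an injective $(\g_n, \oO(n))$-homomorphism, its image is a submodule of $F_\mu \otimes W_{\tilde\mu}$ isomorphic to the irreducible module $W_\mu$, and by Lemma \ref{mink} this image is generated over $(\g_n, \oO(n))$ by its multiplicity-one minimal $\SO(n)$-type $\tau_\mu^+$. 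Hence the $(\g_n, \oO(n))$-submodule of $F_\mu \otimes W_{\tilde\mu}$ generated by the distinguished $\tau_\mu^+$ is precisely this copy of $W_\mu$, which is what the proposition asserts.

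The conceptual work has been pushed into the earlier construction of $\kappa$ (Lemma \ref{kappa}), using primary decomposition with respect to $\oZ(\g_n)$ together with a filtration of $F_\mu$, and into the identification of $\tau_\mu^+$ as the PRV summand. Given those inputs, the only possible obstacle at this stage is verifying that $\Pi^{\mathrm c}_{S_n}(\kappa^{\mathrm c})$ indeed realises $\tau_\mu^+$ as the PRV component and not as the zero map; this is ruled out by the direct-summand argument, which guarantees non-vanishing at the level of derived functors. Beyond that check, the proposition is a purely formal consequence of the functoriality of $\Pi_{S_n}$, the derived Mackey isomorphism, and the uniqueness of the minimal $\SO(n)$-type of $W_\mu$.
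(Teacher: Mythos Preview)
Your proposal is correct and follows essentially the same approach as the paper. The paper's proof is exactly the material in the ``Translations'' subsection culminating in the line ``In conclusion, we have proved the following'': injectivity of $\Pi_{S_n}(\kappa)$ and $\Pi^{\mathrm c}_{S_n}(\kappa^{\mathrm c})$ from the direct-summand statement of Lemma~\ref{kappa}, identification of $\tau_\mu^+$ as the PRV component of $\tau_{-\mu}^\vee\otimes\tau_{\tilde\mu}^+$ via Lemma~\ref{prvc}, and the diagram chase you describe; your write-up simply makes explicit the steps the paper leaves implicit.
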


Theorem 7.237 in \cite{KV} implies that $W_\mu$ is a direct summand of $F_\mu\otimes W_{\tilde \mu}$, and it occurs with multiplicity one in the composition series of $F_\mu\otimes W_{\tilde \mu}$. The point of Proposition \ref{transmu} is that it identifies the minimal $\oO(n)$-type of $W_\mu$  inside $F_\mu\otimes W_{\tilde \mu}$.

\section{Cohomological test vectors}\label{ctest}

\subsection{The result}\label{results}
Recall the weight $\nu$ from \eqref{intrnu}, which is assumed to be pure as before. Applying the discussion of Section \ref{cohom} to $\nu$, we get spaces
\[
  \tau_\nu^+\subset W_{\nu}\subset W_{\nu}^\infty.
  \]
Write
\[
 W_\xi^\infty:=W_\mu^\infty\widehat \otimes W_\nu^\infty.
\]
Then by Lemma \ref{mink} and its analog for $\nu$, the representation
\[
 \tau_\xi^+:=\tau_\mu^+\otimes \tau_\nu^+
\]
of $K^\circ$ occurs with multiplicity one in $W_\xi^\infty$.

Let $j$ be as in the Introduction. This section is devoted to a proof of the following proposition.
\begin{prpl}\label{test}
Every nonzero element of $\Hom_H(W_\xi^\infty, \abs{\det}^{-j})$ does not vanish on $\tau_\xi^+\subset W_\xi^\infty$.
\end{prpl}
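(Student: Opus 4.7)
The plan is to use Proposition~\ref{transmu} (the translation-functor embedding) to reduce the claim to a combinatorial PRV-style nonvanishing, handled by Proposition~\ref{nprv} together with Lemmas~\ref{nv1} and \ref{nv10}. Applying Proposition~\ref{transmu} separately to $\mu$ and $\nu$ yields a $(\g,K)$-equivariant embedding
\[
   \iota:W_\xi\hookrightarrow F_\xi\otimes W_{\tilde\xi}
\]
that realizes $W_\xi$ as a direct summand (via Lemma~\ref{kappa}) and sends the minimal $K^\circ$-type $\tau_\xi^+$ into $\tau_{-\xi}^\vee\otimes\tau_{\tilde\xi}^+$ as the PRV component. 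Passing to Casselman--Wallach smooth globalizations preserves the splitting, since $F_\xi$ is finite-dimensional, so the given nonzero $\phi\in\Hom_H(W_\xi^\infty,\abs{\det}^{-j})$ lifts to an $H$-equivariant functional $\tilde\phi\in\Hom_H(F_\xi\otimes W_{\tilde\xi}^\infty,\abs{\det}^{-j})$ that vanishes on the complementary summand.

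Next, via the tensor--hom adjunction $\tilde\phi$ corresponds to an $H$-equivariant map $\Psi:W_{\tilde\xi}^\infty\to F_\xi^\vee\otimes\abs{\det}^{-j}$, and composing with the $H_\C$-equivariant quotient $\phi_F:F_\xi^\vee\to\det^j$ produces
\[
   \phi_F\circ\Psi\in\Hom_H(W_{\tilde\xi}^\infty,\sgn^j),
\]
since $\det^j\otimes\abs{\det}^{-j}=\sgn^j$ as $H$-characters. The evaluation $\phi(v^+)$ for $v^+\in(\tau_\xi^+)^{C^\circ}$ factors through the PRV embedding $\tau_\xi^+\subset\tau_{-\xi}^\vee\otimes\tau_{\tilde\xi}^+$, and under this factorization it is computed by the separately $C^\circ$-invariant bilinear pairing coming from $\phi_F$ on the first factor and $\phi_F\circ\Psi$ on the second. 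Proposition~\ref{nprv}, combined with the nonvanishing of $\phi_F$ on $\tau_{-\xi}^\vee$ guaranteed by the dual form of Lemmas~\ref{nv1} and \ref{nv10}, then reduces the nonvanishing of $\phi(v^+)$ to the nonvanishing of $\phi_F\circ\Psi$ on $(\tau_{\tilde\xi}^+)^{C^\circ}$.

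The main obstacle is that this reduction replaces the original question by the analogous one for $\tilde\xi$, which is not intrinsically smaller (iterating translation in this direction enlarges the highest weights). However, $\tilde\mu$ and $\tilde\nu$ are pure with center zero, so that $F_{\tilde\mu}\cong F_{\tilde\mu}^\vee$ and the translated picture carries an extra self-duality. I would complete the argument by a direct bottom-layer computation using the Bernstein-functor realization of $\tau_{\tilde\xi}^+$ from Section~\ref{bl}, reducing the surviving nonvanishing to a finite-dimensional branching statement controlled by Lemma~\ref{mulone}. The most delicate point will be to arrange the splitting in the first step so that $\phi_F\circ\Psi$ actually inherits the nonvanishing from $\phi$ rather than vanishing for extraneous reasons; one verifies this by tracking how the projection onto the $(\det^j)$-isotypic component of $F_\xi^\vee$ interacts with the image $W_\xi\subset F_\xi\otimes W_{\tilde\xi}$.
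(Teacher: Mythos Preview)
Your reduction via the translation embedding $W_\xi^\infty\hookrightarrow F_\xi\otimes W_{\tilde\xi}^\infty$ is exactly the framework the paper uses (Proposition~\ref{gxi}), and the application of Proposition~\ref{nprv} and Lemma~\ref{nv10} to handle the PRV component is also correct. But you have identified your own gap accurately: after the reduction you are left with the \emph{same} nonvanishing question for $\tilde\xi$, and your proposed ``direct bottom-layer computation using Lemma~\ref{mulone}'' is not a real argument. Nothing in Section~\ref{bl} or in the finite-dimensional branching theory produces an $H$-equivariant functional on the infinite-dimensional representation $W_{\tilde\xi}^\infty$, let alone controls its restriction to the minimal $K$-type.

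The paper closes this gap by an \emph{analytic} argument that you are missing entirely. Because $\tilde\mu$ and $\tilde\nu$ have weight zero on the center, $W_{\tilde\xi}^\infty$ is unitarizable and tempered as a $G$-representation (not just after restriction to $\SL_n^\pm$). One then writes down an explicit element of $\Hom_H(W_{\tilde\xi}^\infty,\sgn^{-j})$ as the convergent integral
\[
   u\longmapsto\int_H\langle h.u,\,v_{\tilde\xi}\rangle_{\tilde\xi}\,\sgn(h)^j\,dh
\]
against a suitable vector $v_{\tilde\xi}$ in the minimal $K$-type (Lemma~\ref{ematrix}). The key input is a \emph{positivity} result for matrix coefficients of minimal $K$-type vectors on the Cartan factor $S$ (Lemma~\ref{positive}, from \cite{Sun1}), which forces this integral to be strictly positive at $v_{\tilde\xi}$ itself (Lemma~\ref{trc}). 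This gives existence of one nonvanishing functional; the passage to ``every nonzero functional'' then requires the multiplicity one theorem $\dim\Hom_H(W_\xi^\infty,\abs{\det}^{-j})\le 1$ (Lemma~\ref{mulonegl}), which you also did not invoke. Note that the paper runs the construction in the opposite direction from yours: it builds a good functional on $W_\xi^\infty$ out of the explicit one on $W_{\tilde\xi}^\infty$, rather than trying to push a given $\phi$ forward.
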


\subsection{A special case}\label{secspecial}

Recall $\tilde \mu$ from \eqref{mut}. Define $\tilde \nu$ similarly. Similar to \eqref{f}, put
\[
   F_{\tilde \xi}:= F_{\tilde \mu}\otimes  F_{\tilde \nu}.
\]
As in Section \ref{results}, we have an irreducible representation
\[
 W_{\tilde \xi}^\infty:=W_{\tilde \mu}^\infty\widehat \otimes W_{\tilde \nu}^\infty
\]
of $G$, which contains the irreducible representation
\[
  \tau_{\tilde \xi}^+:=\tau_{\tilde \mu}^+\otimes \tau_{\tilde \nu}^+
  \]
 of $K^\circ$ with multiplicity one.

 Recall that
 \[
  \Hom_{H_\C}(F_\xi^\vee, {\det}^j) \neq 0,
  \]
and consequently,
\[
    \Hom_{H_\C}(F_\xi, {\det}^{-j}) \neq 0.
  \]
Note that $F_{\tilde \xi}^\vee\cong F_{\tilde \xi}$ is the Cartan product of $F_\xi$ and $F_\xi^\vee$. Similar to Lemma \ref{cartann}, we have that
\[
    \Hom_{H_\C}(F_{\tilde \xi}^\vee, {\det}^0) \neq 0.
  \]
Therefore $\tilde \mu$ and $\tilde \nu$ are compatible, and $\frac{1}{2}$ is a critical place for $W_{\tilde \mu}^\infty\times W_{\tilde \nu}^\infty$.

Note that the representation $W_{\tilde \xi}^\infty$ of $G$ has trivial central character. Therefore Lemma \ref{temp} (for $\tilde \mu$ and $\tilde \nu$) implies that $W_{\tilde \xi}^\infty$  is  unitarizable and tempered.  Fix a $G$-invariant positive definite continuous Hermitian form $\la \,,\,\ra_{\tilde \xi}$ on $W_{\tilde \xi}^\infty$.

\begin{lem}\label{ematrix}
The integrals in
\begin{equation}\label{hermh}
\begin{array}{rcl}
  W_{\tilde \xi}^\infty \times W_{\tilde \xi}^\infty &\rightarrow &\C,\\
    (u, v)&\mapsto & \int_H \la h.u, v\ra_{\tilde \xi}\,\sgn(h)^j\,\mathrm{d}h
  \end{array}
\end{equation}
converge absolutely, and yield a  continuous Hermitian form on $W_{\tilde \xi}^\infty$. Here ``$\mathrm{d}h$" denotes a Haar measure on $H$.

\end{lem}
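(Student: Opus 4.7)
The plan is to verify absolute convergence, continuity, and the Hermitian property in turn, with the convergence step being the core difficulty, to be handled by reducing to classical decay estimates for matrix coefficients of tempered representations.

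First, I would use the tensor product structure. For decomposable vectors $u = u_1\otimes u_2$ and $v = v_1\otimes v_2$ in $W_{\tilde\mu}^\infty\widehat\otimes W_{\tilde\nu}^\infty$, the matrix coefficient factorizes as
\[
  \la h.u, v\ra_{\tilde\xi} = \la h.u_1, v_1\ra_{\tilde\mu}\cdot \la h.u_2, v_2\ra_{\tilde\nu},
\]
where $h\in H=\GL_{n-1}(\R)$ acts on $W_{\tilde\mu}^\infty$ via $h\mapsto \diag(h,1)\in \GL_n(\R)$ and on $W_{\tilde\nu}^\infty$ tautologically. Since $|\sgn(h)^j| = 1$, it suffices to prove the absolute convergence of $\int_H |\la h.u_1, v_1\ra_{\tilde\mu}|\cdot |\la h.u_2, v_2\ra_{\tilde\nu}|\,dh$ for smooth vectors, and to obtain the bound with a uniform constant over bounded subsets of vectors (for continuity). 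Extension to non-decomposable vectors then follows from the continuity estimate and density.

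Next I would exploit that $\tilde\mu$ and $\tilde\nu$, defined by \eqref{mut}, are pure with $\sum_i \tilde\mu_i = 0$ and analogously for $\tilde\nu$. Thus the central characters of $W_{\tilde\mu}^\infty$ and $W_{\tilde\nu}^\infty$ are trivial, and by Lemma \ref{temp} they descend to unitary tempered representations of $\PGL_n(\R)$ and $\PGL_{n-1}(\R)$ respectively. By the standard Harish-Chandra estimates, each smooth matrix coefficient of such a representation is bounded by a polynomial weight times the Harish-Chandra $\Xi$ function. Applying this to the two factors yields a pointwise bound
\[
 |\la h.u_1,v_1\ra_{\tilde\mu}|\cdot|\la h.u_2,v_2\ra_{\tilde\nu}|
  \leq C\cdot p(h)\cdot \Xi_{\GL_n}(\diag(h,1))\cdot \Xi_{\GL_{n-1}}(h),
\]
with $C$ and the polynomial $p$ depending continuously on the four vectors in the Casselman-Wallach topology.

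The main step is then to show this majorant is integrable on $H$. Using the Cartan decomposition $H = \oO(n-1)\cdot A_{n-1}\cdot \oO(n-1)$ and the explicit $\Xi$-asymptotics on $\GL_k(\R)$ from Harish-Chandra's theory, the integral reduces to an integral over the diagonal torus $A_{n-1}$ against an explicit combination of characters. This is exactly the integral whose convergence underlies the Rankin-Selberg theory for $\GL_n\times \GL_{n-1}$ at $s = \frac{1}{2}$; that $\frac{1}{2}$ is a critical place for $W_{\tilde\mu}^\infty\times W_{\tilde\nu}^\infty$ (established in the paragraph preceding the lemma) places us inside the range of absolute convergence. Continuity of the form then follows from the uniformity of the Harish-Chandra majorants, and Hermitian-ness follows from the substitution $h\mapsto h^{-1}$: unimodularity of $H$ combined with $\overline{\la h.u,v\ra_{\tilde\xi}} = \la u, h.v\ra_{\tilde\xi}$ and the real-valuedness of $\sgn(h)^j=\sgn(h^{-1})^j$ yields $\overline{B(u,v)}=B(v,u)$.

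The step I expect to be the main obstacle is the passage from the pointwise $\Xi$-majorant to integrability on $H$: neither $\Xi_{\GL_n}|_H$ nor $\Xi_{\GL_{n-1}}$ is individually integrable on $H$, and only their combined asymptotic behavior, interpreted through the Rankin-Selberg zeta integral, gives convergence. The criticality of $\frac{1}{2}$ is used precisely at this point; without it, the integral would diverge even for smooth vectors.
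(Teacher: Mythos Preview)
Your overall strategy---bound the matrix coefficient by Harish-Chandra $\Xi$-functions and then verify integrability over $H$---is the right one, and is in spirit what the paper does. However, the justification you give for the integrability step rests on a genuine misconception, and the paper's execution is considerably simpler.

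The paper does not factor through the tensor product at all. It works directly on $G$ and invokes \cite[Theorem~1.2]{Sun2}, which furnishes a continuous seminorm $\abs{\,\cdot\,}_{\tilde\xi}$ on $W_{\tilde\xi}^\infty$ with
\[
  \abs{\la g.u,v\ra_{\tilde\xi}} \le \Xi_K(g)\,\abs{u}_{\tilde\xi}\,\abs{v}_{\tilde\xi}
  \qquad\text{for all }g\in G,
\]
with \emph{no} polynomial factor. It then notes, via the explicit asymptotics of \cite[Theorem~4.5.3]{Wa1}, that $\Xi_K|_H$ is integrable. That is the whole argument for convergence and continuity; the Hermitian property follows from unimodularity of $H$, as you say.

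Your appeal to the criticality of $\tfrac{1}{2}$ and to Rankin--Selberg theory is misplaced. The integral in the lemma is an integral of a matrix coefficient over $H$, not a Rankin--Selberg zeta integral (which involves Whittaker functions and a complex parameter $s$); the two are different objects with different convergence mechanisms. More importantly, the integrability of $\Xi_K|_H$---equivalently, of $h\mapsto \Xi_{\GL_n}(\diag(h,1))\,\Xi_{\GL_{n-1}}(h)$---is a purely structural fact about the pair $(G,H)$ and has nothing to do with any particular representation, infinitesimal character, or critical place. It holds for \emph{every} tempered representation of $G$ with unitary central character; the only input about $W_{\tilde\xi}^\infty$ is temperedness. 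Your final claim that ``without criticality the integral would diverge even for smooth vectors'' is simply false. If you actually carry out the Cartan-decomposition computation you sketch, you will find convergence directly from the $\Xi$-asymptotics, with no $L$-function in sight. The polynomial weight $p(h)$ in your majorant is also harmless (the exponential decay of $\Xi_K|_H$ absorbs it), but it is unnecessary once one uses the sharp bound from \cite{Sun2}.
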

\begin{proof}
Denote by $\Xi_K$ the Harish-Chandra $\Xi$-function (see \cite[Section 4.5.3]{Wa1}) on $G$ associated to the maximal compact subgroup $K$. By \cite[Theorem 1.2]{Sun2}, there is a continuous seminorm $\abs{\,\cdot\,}_{\tilde \xi}$ on $W_{\tilde \xi}^\infty$ such that
\[
\abs{ \la g.u,v\ra_{\tilde \xi}}\leq \Xi_K(g) \cdot \abs{u}_{\tilde \xi}\cdot \abs{v}_{\tilde \xi}, \quad \textrm{for all } u,v\in W_{\tilde \xi}^\infty, \, g\in G.
\]
By the estimate of $\Xi_K$ in \cite[Theorem 4.5.3]{Wa1}, it is easily verified that the restriction of $\Xi_K$ to $H$ is integrable. Therefore \eqref{hermh} is convergent and continuous. It defines a Hermitian form since $H$ is unimodular.
\end{proof}

 Fix an arbitrary element $g_C\in C\setminus C^\circ$. By the discussion of Section \ref{bl}, the space
 \begin{equation}\label{mino}
  \tau_{\tilde \xi}^+\oplus g_C.\tau_{\tilde \xi}^+
  \end{equation}
 forms the unique minimal $K$-type of $W_{\tilde \xi}^\infty$.
Write
\[
 G= SK
\]
for the Cartan decompostion of $G$, where
\[
S:=\{(x,y)\in G\mid \textrm{$x$ and $y$ are positive definite symmetric matrices}\}.
\]
As mentioned in the Introduction, the following result is a key ingredient of the proof.

\begin{lem}\label{positive} (see \cite[Theorem 1.5]{Sun1}) For every nonzero vector $u$ in the minimal $K$-type \eqref{mino} of $W_{\tilde \xi}^\infty$, the inequality
\[
  \la g.u, u\ra_{\tilde \xi}>0
\]
holds for all $g\in S$.
\end{lem}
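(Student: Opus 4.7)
The plan is to reduce the two-factor problem to positivity on each tensor factor separately and then invoke the author's earlier positivity theorem \cite[Theorem 1.5]{Sun1}. All the structural work is already in place: the cohomological induction description of $W_{\tilde\mu}^\infty, W_{\tilde\nu}^\infty$ together with the bottom layer analysis of Section \ref{bl} pins down the minimal $K$-types explicitly.

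First, since $W_{\tilde\xi}^\infty = W_{\tilde\mu}^\infty \,\widehat\otimes\, W_{\tilde\nu}^\infty$ is irreducible and unitarizable, the $G$-invariant positive definite continuous Hermitian form is unique up to a positive scalar, and we may arrange
\[
 \langle\cdot,\cdot\rangle_{\tilde\xi} \,=\, \langle\cdot,\cdot\rangle_{\tilde\mu}\otimes \langle\cdot,\cdot\rangle_{\tilde\nu}
\]
for corresponding invariant forms on the factors. The Cartan decomposition respects the product structure, $S = S_n\times S_{n-1}$ and $K=\oO(n)\times \oO(n-1)$, where $S_k$ is the set of positive definite symmetric matrices in $\GL_k(\R)$. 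Since the minimal $K$-type of a tensor product of irreducible admissible representations is the outer tensor product of the minimal $K_i$-types (Vogan's norm being additive across factors), the minimal $K$-type \eqref{mino} of $W_{\tilde\xi}^\infty$ equals $M_{\tilde\mu}\otimes M_{\tilde\nu}$, where $M_{\tilde\mu}$ and $M_{\tilde\nu}$ are the minimal $\oO(n)$- and $\oO(n-1)$-types of the two factor representations as described in Section \ref{bl}.

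Second, assuming the analogous factor statements hold, fix $g=(g_1,g_2)\in S$ and let $P_\mu$, $P_\nu$ denote the orthogonal projections onto $M_{\tilde\mu}$ and $M_{\tilde\nu}$. Since $g_1, g_2$ are self-adjoint with respect to the respective invariant forms, $A := P_\mu g_1 P_\mu$ and $B := P_\nu g_2 P_\nu$ are Hermitian operators on the finite dimensional spaces $M_{\tilde\mu}$, $M_{\tilde\nu}$, and the factor hypotheses assert that $A$ and $B$ are strictly positive definite. Then $A\otimes B$ is strictly positive definite on $M_{\tilde\mu}\otimes M_{\tilde\nu}$, so for any nonzero $u$ in the minimal $K$-type,
\[
 \langle g.u, u\rangle_{\tilde\xi} \,=\, \langle (A\otimes B)u,\,u\rangle_{\tilde\xi} \,>\, 0.
\]

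It therefore suffices to prove the single-group version for each of $W_{\tilde\mu}^\infty$ and $W_{\tilde\nu}^\infty$. Each such representation is irreducible, unitarizable, tempered (by Lemma \ref{temp}), has trivial central character (since $\tilde\mu_i + \tilde\mu_{n+1-i}=0$), and has the minimal $\oO(k)$-type identified through the bottom layer map in Section \ref{bl}. This is precisely the setting to which \cite[Theorem 1.5]{Sun1} applies, delivering the positivity of the diagonal matrix coefficient on the positive symmetric part. The only genuine obstacle is checking that the structural hypotheses of that theorem are matched by $W_{\tilde\mu}^\infty$, $W_{\tilde\nu}^\infty$; the cohomological induction picture and bottom layer calculations of Section \ref{cohom} are arranged precisely so that this verification is direct.
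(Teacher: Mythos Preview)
The paper gives no proof here; the parenthetical ``(cf.\ \cite[Theorem~1.5]{Sun1})'' is the proof. That theorem is a general positivity statement for irreducible unitary representations of real reductive groups with real infinitesimal character, and it applies directly to the product group $G=\GL_n(\R)\times\GL_{n-1}(\R)$, the representation $W_{\tilde\xi}^\infty$, and its minimal $K$-type. Your factorization to the two tensor factors and subsequent invocation of the same theorem on each factor is a correct but slightly longer route to the identical citation; nothing is gained or lost beyond making the product structure explicit.

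One point in your write-up needs repair. The sentence ``since $g_1,g_2$ are self-adjoint with respect to the respective invariant forms'' is not right: in a unitary representation $\pi$ one has $\pi(g)^*=\pi(g^{-1})$, and for $g\in S_k\setminus\{1\}$ this is not $\pi(g)$. Fortunately your argument does not actually need self-adjointness of $\pi(g_i)$. Once the single-factor result gives $\langle g_1 v,v\rangle_{\tilde\mu}>0$ for every nonzero $v\in M_{\tilde\mu}$, it follows automatically that the compression $A=P_\mu\,\pi(g_1)\,P_\mu$ is Hermitian positive definite on the finite-dimensional complex space $M_{\tilde\mu}$: over $\C$, any operator with $\langle Av,v\rangle\in\R_{>0}$ for all $v\neq 0$ is Hermitian (write $A=H+iK$ with $H,K$ Hermitian and observe $\langle Kv,v\rangle=0$ forces $K=0$). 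The same holds for $B$, and then $A\otimes B$ is positive definite on $M_{\tilde\mu}\otimes M_{\tilde\nu}$, which is what you use. So replace the self-adjointness clause by this observation and the argument stands.
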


By \eqref{lxi1} (for $\tilde \xi$) and
\eqref{taunn}, Lemma \ref{cartann} implies that  $(\tau_{\tilde \xi}^+)^{C^\circ}\neq 0$.
   Take a nonzero element
\[
  u_{\tilde \xi}\in (\tau_{\tilde \xi}^+)^{C^\circ}\subset \tau_{\tilde \xi}^+\subset W_{\tilde \xi}^\infty,
\]
and put
\[
  v_{\tilde \xi}:=\frac{u_{\tilde \xi}+ (-1)^j\, g_C. u_{\tilde \xi}}{2}.
\]
Then $v_{\tilde \xi}$ is a nonzero vector in the minimal $K$-type such that
\begin{equation}\label{gvxi}
  g.v_{\tilde \xi}=(\sgn(g))^j\, v_{\tilde \xi} ,\qquad \textrm{for all } g\in C.
\end{equation}

\begin{lem}\label{trc}
The Hermitian form \eqref{hermh} is positive definite on the one dimensional space $\C v_{\tilde \xi}$.
\end{lem}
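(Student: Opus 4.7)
My plan is to unwind the Hermitian form at $v_{\tilde\xi}$ using the Cartan (polar) decomposition of $H$ and then invoke the strong positivity result Lemma~\ref{positive}. Since $\C v_{\tilde\xi}$ is one-dimensional, I only need to show
\[
\int_H \la h.v_{\tilde\xi}, v_{\tilde\xi}\ra_{\tilde \xi}\, \sgn(h)^j\, dh > 0,
\]
and convergence of the integral is already granted by Lemma~\ref{ematrix}.

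First, I would apply the polar decomposition $H=S_H\cdot C$, where $S_H\subset H$ is the cone of positive definite symmetric matrices. Any $h\in H$ factors uniquely as $h=sc$ with $s\in S_H$, $c\in C$, and the Haar measure decomposes as $dh=J(s)\,ds\,dc$ for a strictly positive Jacobian $J$. Because $\det(s)>0$, we have $\sgn(h)=\sgn(c)$. Using the defining property \eqref{gvxi} of $v_{\tilde\xi}$, namely $c.v_{\tilde\xi}=\sgn(c)^j v_{\tilde\xi}$ for $c\in C$, the sesquilinearity of $\la\,,\,\ra_{\tilde\xi}$ gives
\[
\la h.v_{\tilde\xi},v_{\tilde\xi}\ra_{\tilde\xi}\,\sgn(h)^j
=\sgn(c)^j\la s.v_{\tilde\xi},v_{\tilde\xi}\ra_{\tilde\xi}\,\sgn(c)^j
=\la s.v_{\tilde\xi},v_{\tilde\xi}\ra_{\tilde\xi},
\]
since $\sgn(c)^{2j}=1$. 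The $C$-dependence therefore drops out, and Fubini reduces the integral to
\[
\vol(C)\int_{S_H}\la s.v_{\tilde\xi},v_{\tilde\xi}\ra_{\tilde\xi}\,J(s)\,ds.
\]

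Second, I would observe that under the embedding \eqref{emh}, an element $s\in S_H$ maps to $\bigl(\diag(s,1),s\bigr)\in G$, which is again positive definite symmetric, hence lies in the cone $S$ of the Cartan decomposition $G=SK$. Because $v_{\tilde\xi}$ is a nonzero vector in the minimal $K$-type \eqref{mino} of $W_{\tilde\xi}^\infty$, Lemma~\ref{positive} yields $\la s.v_{\tilde\xi},v_{\tilde\xi}\ra_{\tilde\xi}>0$ for every $s\in S_H$. Since $J$ is strictly positive and $\vol(C)>0$, the entire integral is strictly positive, proving the lemma.

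The only real point requiring care is the cancellation of the two sign factors, which is exactly the reason $v_{\tilde\xi}$ was constructed as the $\sgn^j$-symmetrization of $u_{\tilde\xi}$ with respect to the coset $g_C C^\circ$; once this is in place, everything else is formal manipulation plus a single application of Lemma~\ref{positive}. There is no genuine obstacle: the heavy lifting was already done in Lemma~\ref{positive}, whose proof rests on the theory of matrix coefficients of tempered representations and their behavior on the symmetric cone.
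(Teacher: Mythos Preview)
Your proof is correct and follows essentially the same route as the paper: decompose $H$ via the polar (Cartan) decomposition, use the $C$-equivariance property \eqref{gvxi} of $v_{\tilde\xi}$ to kill the sign factor, and then apply Lemma~\ref{positive} on the positive-definite cone. Your explicit remark that $S_H$ lands in $S$ under the embedding \eqref{emh} is a detail the paper leaves implicit in the notation $S\cap H$, but otherwise the arguments are identical.
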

\begin{proof}
By \eqref{gvxi} and Lemma \ref{positive}, we have that
  \begin{eqnarray*}
     &&\int_H  \la h. v_{\tilde \xi}, v_{\tilde \xi}\ra_{\tilde \xi} \,\sgn(h)^j\,\mathrm{d}h \\
     &=&  \int_{S\cap H} \int_C  \la sk. v_{\tilde \xi},v_{\tilde \xi}\ra_{\tilde \xi}\,\sgn(k)^j\,\mathrm{d}k\,\mathrm{d}s \\
     &=&   \int_{S\cap H}  \la s. v_{\tilde \xi},v_{\tilde \xi}\ra_{\tilde \xi}\,\mathrm{d}s >0. \\
       \end{eqnarray*}
Here ``$\mathrm{d}k$" denotes the normalized Haar measure on $C$, and ``$\mathrm{d}s$" is a certain positive measure on $S\cap H$. This proves the lemma.

\end{proof}
\begin{lem}\label{test2}
There exists an element of $\Hom_H(W_{\tilde \xi}^\infty, \sgn^{-j})$ which does not  vanish on
\[
 \tau_{\tilde \xi}^+\subset W_{\tilde \xi}^\infty.
\]
\end{lem}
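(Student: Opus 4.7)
The plan is to construct the desired Hom element as a matrix-coefficient integral: pair the Hermitian form from Lemma \ref{ematrix} against the test vector $v_{\tilde \xi}$. Concretely, I would define
\[
   L: W_{\tilde \xi}^\infty \to \C, \qquad L(u) := \int_H \la h.u, v_{\tilde \xi}\ra_{\tilde \xi}\,\sgn(h)^j\,dh,
\]
whose convergence and continuity are granted by Lemma \ref{ematrix}.

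First I would verify that $L\in \Hom_H(W_{\tilde \xi}^\infty, \sgn^{-j})$. This is a standard change of variables: replacing $h$ by $h h_0^{-1}$ in the defining integral and using unimodularity of $H$ together with $\sgn(h_0)^2=1$ yields $L(h_0.u) = \sgn(h_0)^j L(u)$. Since $j\in \Z$, the character $\sgn^j$ coincides with $\sgn^{-j}$, as required.

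Next I would show that $L$ does not vanish on $u_{\tilde \xi}\in \tau_{\tilde \xi}^+$, which forces $L$ not to vanish on $\tau_{\tilde \xi}^+$. Apply the equivariance above with $h_0=g_C\in C\setminus C^\circ$, for which $\sgn(g_C)=-1$, to obtain $L(g_C.u_{\tilde \xi}) = (-1)^j L(u_{\tilde \xi})$. Substituting $v_{\tilde \xi}=\tfrac{1}{2}(u_{\tilde \xi}+(-1)^j g_C.u_{\tilde \xi})$ into the first slot of the Hermitian form gives
\[
   B(v_{\tilde \xi}, v_{\tilde \xi}) \;=\; \tfrac{1}{2}\bigl(L(u_{\tilde \xi}) + (-1)^j L(g_C.u_{\tilde \xi})\bigr) \;=\; L(u_{\tilde \xi}),
\]
where $B$ denotes the Hermitian form of \eqref{hermh}. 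By Lemma \ref{trc}, the left hand side is strictly positive, hence $L(u_{\tilde \xi})>0$.

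There is no real obstacle here: all of the substantive work (absolute convergence of the matrix coefficient integral over $H$, positivity of $B$ on $v_{\tilde \xi}$, and the existence of a $C^\circ$-invariant vector in the minimal $K$-type) has been accomplished in the preceding lemmas. The only small subtlety is the symmetry trick forcing $L(u_{\tilde \xi})$ to equal $B(v_{\tilde \xi},v_{\tilde \xi})$, and this is precisely why the sign $(-1)^j$ was built into the definition of $v_{\tilde \xi}$.
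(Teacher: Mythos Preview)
Your proof is correct and follows essentially the same approach as the paper: define $L(u)=\int_H \la h.u,v_{\tilde\xi}\ra_{\tilde\xi}\,\sgn(h)^j\,dh$, check $H$-equivariance, and invoke Lemma~\ref{trc} for nonvanishing. Your final step is a small but pleasant variant: rather than observing that $L(v_{\tilde\xi})\neq 0$ and then using $C$-equivariance to transfer nonvanishing from the full minimal $K$-type $\tau_{\tilde\xi}^+\oplus g_C.\tau_{\tilde\xi}^+$ to $\tau_{\tilde\xi}^+$ (as the paper does), you use $C$-equivariance directly to compute $L(u_{\tilde\xi})=B(v_{\tilde\xi},v_{\tilde\xi})>0$, which is marginally more explicit.
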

\begin{proof}
The continuous linear functional
 \begin{equation}\label{zu1}
   W_{\tilde \xi}^\infty \rightarrow \C,\quad  u\mapsto \int_H  \la h. u,v_{\tilde \xi}\ra_{\tilde \xi}\,\sgn(h)^j\,\mathrm{d}h
 \end{equation}
belongs to $\Hom_H(W_{\tilde \xi}^\infty, \sgn^{-j})$. By Lemma \ref{trc}, it does not vanish on the minimal $K$-type $\tau_{\tilde \xi}^+\oplus g_C.\tau_{\tilde \xi}^+$. Then its $C$-equivariance further implies that it does not vanish on $\tau_{\tilde \xi}^+$.

\end{proof}

\subsection{The general case}

Note that $\tau_\xi^+$ is the PRV component of $\tau_{-\xi}^\vee\otimes \tau_{\tilde \xi}^+$. Using Casselman-Wallach's theory of smooth globalizations (see \cite{Cass}, \cite[Chapter
11]{Wa2} or \cite{BK}), Proposition \ref{transmu} and its analog for $\nu$ imply the following proposition.
\begin{prpl}\label{gxi}
The irreducible $K^\circ$-representation
 \[
   \tau_\xi^+\subset \tau_{-\xi}^\vee\otimes \tau_{\tilde \xi}^+
   \subset F_\xi\otimes W_{\tilde \xi}^\infty
 \]
 generates  an irreducible $G$-subrepresentation of  $F_\xi\otimes  W_{\tilde \xi}^\infty$ which is  isomorphic to $W_\xi^\infty$.
\end{prpl}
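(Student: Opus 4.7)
The plan is to deduce Proposition \ref{gxi} from Proposition \ref{transmu} (applied to $\mu$) together with its $\nu$-analog, and then to transport the resulting embedding to the Casselman-Wallach level via the smooth globalization functor.

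First, Proposition \ref{transmu} supplies an irreducible $(\g_n,\oO(n))$-embedding $W_\mu \hookrightarrow F_\mu \otimes W_{\tilde\mu}$ whose image is generated by $\tau_\mu^+ \subset \tau_{-\mu}^\vee \otimes \tau_{\tilde\mu}^+$, and the analogous construction for $\nu$ yields $W_\nu \hookrightarrow F_\nu \otimes W_{\tilde\nu}$. Taking outer tensor products produces an embedding of $(\g,K)$-modules
\[
  W_\mu \otimes W_\nu \hookrightarrow F_\xi \otimes (W_{\tilde\mu} \otimes W_{\tilde\nu})
\]
whose image contains the Cartan product $\tau_\xi^+ = \tau_\mu^+ \otimes \tau_\nu^+$, placed inside $\tau_{-\xi}^\vee \otimes \tau_{\tilde\xi}^+$. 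Because each factor is irreducible, $W_\mu \otimes W_\nu$ is itself an irreducible $(\g,K)$-module.

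Second, I apply the Casselman-Wallach smooth globalization functor (see \cite{Cass}, \cite[Chapter 11]{Wa2}, or \cite{BK}), which is exact, preserves irreducibility, and carries outer tensor products of Harish-Chandra modules to completed projective tensor products of the corresponding Casselman-Wallach representations. Since $F_\xi$ is finite dimensional it equals its own globalization, and the previous embedding globalizes to a $G$-equivariant embedding of Casselman-Wallach representations
\[
  W_\xi^\infty \;=\; W_\mu^\infty \,\widehat\otimes\, W_\nu^\infty \;\hookrightarrow\; F_\xi \otimes W_{\tilde\xi}^\infty
\]
whose image still contains $\tau_\xi^+$. As $W_\xi^\infty$ is irreducible, this image is a closed irreducible $G$-invariant subspace containing $\tau_\xi^+$, so it coincides with the $G$-subrepresentation generated by $\tau_\xi^+$, yielding Proposition \ref{gxi}.

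The only step requiring attention is the interaction between Casselman-Wallach smooth globalization, outer tensor products, and irreducibility; these compatibilities are standard consequences of the existence and uniqueness of the smooth globalization functor cited above, so I do not expect any substantive obstacle.
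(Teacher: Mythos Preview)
Your proposal is correct and follows essentially the same approach as the paper: the paper's proof of Proposition \ref{gxi} consists precisely of the remark that, by Casselman-Wallach smooth globalization, Proposition \ref{transmu} together with its analog for $\nu$ yields the result. You have simply unpacked this one-line argument by making the outer tensor product step and the properties of the globalization functor explicit.
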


We are now prepared to prove Proposition \ref{test}. Take a nonzero element
\[
 \phi_F'\in \Hom_{H_\C}(F_\xi, {\det}^{-j}).
 \]
By Lemma \ref{nv10}, it does not vanish on $\tau_{-\xi}^\vee\subset F_\xi$. Using Lemma \ref{test2}, take a nonzero element
 \[
   \phi_{\tilde \xi}\in \Hom_H(W_{\tilde \xi}^\infty, \sgn^{-j})
 \]
 which does not vanish on $\tau_{\tilde \xi}^+\subset  W_{\tilde \xi}^\infty$. By Proposition \ref{gxi} and Proposition \ref{nprv}, the continuous linear functional
 \[
 \phi_F'\otimes   \phi_{\tilde \xi}: F_\xi\otimes W_{\tilde \xi}^\infty\rightarrow \abs{\det}^{-j}={\det}^{-j}\otimes \sgn^{-j}
 \]
 restricts to an element of $\Hom_H(W_\xi^\infty, \abs{\det}^{-j})$ which does not vanish on $\tau_\xi^+\subset W_\xi^\infty$. This finishes the proof of Proposition \ref{test},  in view of the following multiplicity one theorem.

\begin{lem}(\cite[Theorem B]{AzG} and \cite[Theorem B]{SZ})\label{mulonegl}
For every irreducible Casselman-Wallach representation $\pi_G$ of $G$ and every character $\chi_H$ of $H$, the inequality
\[
  \dim \Hom_H(\pi_G, \chi_H)\leq 1
\]
holds.
\end{lem}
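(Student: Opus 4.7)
The plan is to apply the Gelfand--Kazhdan criterion in its Archimedean form (as developed by Aizenbud--Gourevitch--Rallis--Schiffmann and Aizenbud--Gourevitch). The criterion reduces the desired inequality $\dim \Hom_H(\pi_G, \chi_H)\leq 1$ (for every irreducible Casselman--Wallach $\pi_G$ and every character $\chi_H$) to a statement about invariant distributions on $G$. Concretely, it suffices to exhibit a continuous anti-involution $\sigma$ of $G$ that preserves $H$, such that every $(H\times H)$-invariant distribution (or more generally, generalized function) on $G$ which transforms under the two $H$-actions by compatible characters is automatically $\sigma$-invariant.

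\textbf{Step 1: Choice of anti-involution.} I would take the transpose map
\[
\sigma:G\to G,\qquad (g_1,g_2)\mapsto (g_1^{\mathrm t},g_2^{\mathrm t}).
\]
Under the embedding \eqref{emh}, an element $h\in H=\GL_{n-1}(\R)$ sits in $G$ as $(\mathrm{diag}(h,1),h)$, and $\sigma$ sends this to $(\mathrm{diag}(h^{\mathrm t},1),h^{\mathrm t})$, which is again the image of $h^{\mathrm t}\in H$. So $\sigma(H)=H$, and moreover $\sigma$ acts as the standard involution on $H$. Since every character of $H$ is a twist of a power of $\abs{\det}$ by a character of $\pi_0(H)$, it is preserved by $\sigma$.

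\textbf{Step 2: Gelfand--Kazhdan reduction.} Granted Step 1, the Gelfand--Kazhdan machinery (in its smooth Fr\'echet form) implies that $\dim\Hom_H(\pi_G,\chi_H)\cdot\dim\Hom_H(\pi_G^\vee,\chi_H^{-1})\leq 1$. Since $\pi_G^\vee$ is isomorphic to $\pi_G\circ\sigma$ (twisted appropriately), this gives $(\dim\Hom_H(\pi_G,\chi_H))^2\leq 1$, yielding the lemma. So the entire content is in proving the distributional invariance statement.

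\textbf{Step 3: Invariant distributions and descent.} The main technical task, and by far the hardest, is proving that every $(H\times H,\chi_H\boxtimes\chi_H^{-1})$-equivariant distribution $T$ on $G$ satisfies $\sigma^*T=T$. I would use Harish-Chandra descent in the style of Aizenbud--Gourevitch: localize at each semisimple double coset, use the Luna slice theorem to reduce to an analogous statement on the centralizer, and then invoke induction on dimension (the base case $n=1$ being trivial). After descent, the problem localizes near the identity, and one must control distributions on the tangent space $\g/\h\oplus\h\backslash\g$ that are supported on the nilpotent cone. The remaining nilpotent analysis is handled using a careful study of the $H$-orbit structure on the appropriate slice, combined with a Bernstein-style argument bounding the wavefront set or applying integrability of symbols with respect to the Euler vector field. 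This descent-plus-nilpotent-analysis step is the main obstacle; in the present paper the author quite reasonably cites \cite[Theorem B]{AzG} and \cite[Theorem B]{SZ}, where the full argument is executed, rather than reproving it.
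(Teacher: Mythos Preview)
The paper does not supply a proof of this lemma at all: it is stated with references to \cite{AzG} and \cite{SZ} and used as a black box. Your proposal is therefore not to be compared against any argument in the paper itself; rather, you have sketched (accurately, in broad strokes) the Gelfand--Kazhdan strategy that those references carry out, and you explicitly acknowledge in your final sentence that the paper proceeds by citation. So there is no discrepancy to report.

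One small remark on your Step~2: the passage from the product inequality
\[
\dim\Hom_H(\pi_G,\chi_H)\cdot\dim\Hom_H(\pi_G^\vee,\chi_H^{-1})\leq 1
\]
to the desired bound uses that $\pi_G^\vee\cong\pi_G\circ\sigma$ for irreducible Casselman--Wallach representations of $\GL_n(\R)\times\GL_{n-1}(\R)$. This is itself a nontrivial fact (the Archimedean Gelfand--Kazhdan theorem for $\GL_n$), and in the cited works the argument is organized slightly differently so as not to rely on it separately; but your outline is faithful to the overall shape of the method.
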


\section{Proof of Theorem \ref{main}}\label{pmanin}

By \eqref{omegamu}, the set
\[
  \Omega(\xi):=\{\pi_n\widehat \otimes\pi_{n-1}\mid \pi_n\in \Omega(\mu),\,\pi_{n-1}\in \Omega(\nu)\}
\]
consists two irreducible representations of $G$. These two representations have the same restrictions to $G^\circ$, and to $H$.   Therefore, in order to prove Theorem \ref{main}, we may (and do) assume that $\pi_\xi=W_{\xi}^\infty$.

As in the discussion of Section \ref{nonvc}, we have that
\begin{equation}\label{coh3}
  \oH^{b_n+b_{n-1}}(\g, \tilde K^\circ; F_\xi^\vee\otimes W_{\xi}^\infty)=\Hom_{K^\circ}(\wedge^{b_n+b_{n-1}}(\g/\tilde \k), F_\xi^\vee\otimes W_{\xi}^\infty).
  \end{equation}
Likewise,
\begin{equation}
                \oH^{b_n+b_{n-1}}(\h, C^\circ; {\sgn}^j)=\Hom_{C^\circ}(\wedge^{b_n+b_{n-1}}(\h/\c), {\sgn}^j).
                     \end{equation}

Note that by Lemma \ref{prvc}, $\tau_{n,n-1}$ is the PRV component of $\tau_\xi^\vee \otimes \tau_\xi^+$. Write
\[
\varphi_\xi: \tau_{n,n-1}\rightarrow F_\xi^\vee\otimes W_{\xi}^\infty
\]
for the inclusion
\[
 \tau_{n,n-1} \subset \tau_\xi^\vee \otimes \tau_\xi^+\subset F_\xi^\vee\otimes W_{\xi}^\infty.
\]
Recall from the Introduction two nonzero elements
\[
  \phi_F\in \Hom_{H_\C}(F_\xi^\vee, {\det}^j)\quad\textrm{and}\quad \phi_\pi\in \Hom_H(W_\xi^\infty, \abs{\det}^{-j}).
\]

Recall the map $\eta_{n,n-1}: \wedge^{b_n+b_{n-1}}(\g/\tilde \k)\rightarrow \tau_{n,n-1}$ from \eqref{etan}. The composition of
\[
   \wedge^{b_n+b_{n-1}}(\g/\tilde \k)\xrightarrow{\eta_{n,n-1}} \tau_{n,n-1}\xrightarrow{\varphi_\xi} F_\xi^\vee\otimes W_{\xi}^\infty
\]
is an element of \eqref{coh3}. Its image under the map \eqref{homh} of Theorem \ref{main} equals the composition map
\begin{equation}\label{comhc0}
  \wedge^{b_n+b_{n-1}}(\h/\c) \xrightarrow{\iota_{n,n-1}} \wedge^{b_n+b_{n-1}}(\g/\tilde \k)\xrightarrow{\eta_{n,n-1}} \tau_{n,n-1}\xrightarrow{\varphi_\xi}  F_\xi^\vee\otimes W_{\xi}^\infty\xrightarrow{\phi_F\otimes \phi_\pi} \sgn^j.
\end{equation}
By Lemma \ref{nv1}, Proposition \ref{test} and Proposition \ref{nprv}, the composition of the last two arrows of \eqref{comhc0} is nonzero. Since it is $C^\circ$-invariant, it does not vanish on $\tau_{n,n-1}^{C^\circ}$. By Lemma \ref{nv2}, $\tau_{n,n-1}^{C^\circ}$ is equal to the image of the compositions of the first two arrows of \eqref{comhc0}. Therefore the composition \eqref{comhc0} is nonzero. This finishes the proof of Theorem \ref{main}.

\section{The case of complex groups}\label{complexg}

\subsection{The result}
Fix an integer $n\geq 2$ as before. Let $\K$ be a topological field which is isomorphic to $\C$, and write $\iota_1, \iota_2:\K\rightarrow \C$ for the two distinct isomorphisms.

 The notation of this section is different from that of previous ones. Fix a sequence
\[
  \mu=(\mu_1\geq \mu_2\geq \cdots \geq \mu_n;\, \mu_{n+1}\geq \mu_{n+2}\geq \cdots \geq \mu_{2n} )\in \Z^{2n}.
\]
Denote by $F_\mu$ the irreducible algebraic representation of $\GL_n(\C)\times \GL_n(\C)$ of highest weight $\mu$. It is also viewed as an irreducible representation of the real Lie group $\GL_n(\K)$ by restricting through the complexification map
\begin{equation}\label{compl}
  \GL_n(\K)\rightarrow \GL_n(\C)\times \GL_n(\C),\quad g\mapsto (\iota_1(g), \iota_2(g)).
\end{equation}

Denote by $\Omega(\mu)$ the set of isomorphism classes of irreducible Casselman-Wallach representations $\pi$ of
$\GL_{n}(\K)$ such that
\begin{itemize}
                           \item
                            $\pi|_{{\operatorname{SL}_n}(\K)}$ is unitarizable and
                            tempered, and
                           \item the total relative Lie algebra cohomology
                           \begin{equation}\label{cohome}
                             \oH^*(\gl_{n}(\C)\times \gl_n(\C),\GU(n);
                           F_\mu^\vee\otimes \pi)\neq 0,
                           \end{equation}
                             \end{itemize}
where  $\gl_{n}(\C)\times \gl_n(\C)$ is viewed as the complexification of $\gl_n(\K)$ through the differential of \eqref{compl}, and
\[
  \GU(n):=\{g\in \GL_n(\K)\mid \bar{g}^{\mathrm t} g\textrm{ is a scalar matrix}\}.
  \]

Similar to the real case, we have \cite[Section 3]{Clo},
\[
  \#(\Omega(\mu))=\left\{
                \begin{array}{ll}
                  0, & \hbox{if $\mu$ is not pure;} \\
                  1, & \hbox{if $\mu$ is pure.} \\
                                 \end{array}
              \right.
\]
Here ``$\mu$ is pure" means that
\begin{equation}
  \mu_{1}+\mu_{2n}=\mu_{2}+\mu_{2n-1}=\cdots=\mu_{n}+\mu_{n+1}.
\end{equation}
 Assume that $\mu$ is pure, and let $\pi_\mu$ be the unique representation in $\Omega(\mu)$.

Put
\[
  b_n:=\frac{n(n-1)}{2}.
\]
Then \cite[Lemma 3.14]{Clo}
\[
  \oH^b(\gl_{n}(\C)\times \gl_n(\C),\GU(n);
                           F_\mu^\vee\otimes \pi_\mu)=0,\qquad \textrm{if $b<b_n$,}
\]
and
\[
  \dim \oH^{b_n}(\gl_{n}(\C)\times \gl_n(\C),\GU(n);
                           F_\mu^\vee\otimes \pi_\mu)=1.
\]

We also fix a sequence
\[
  \nu=(\nu_1\geq \nu_2\geq \cdots \geq \nu_{n-1};\, \nu_n\geq \nu_{n+1}\geq \cdots \geq \nu_{2(n-1)} )\in \Z^{2(n-1)}.
\]
Define $F_\nu$ and $\Omega(\nu)$ similarly. Assume that $\nu$ is pure, and let $\pi_\nu$ be the unique representation in $\Omega(\nu)$.

Put
\[
G:=\GL_{n}(\K)\times \GL_{n-1}(\K),
\]
to be viewed as a real Lie group. View
\[
  H:=\GL_{n-1}(\K)
\]
as a subgroup of $G$ via the embedding
\begin{equation}\label{comp3}
g\mapsto \left(\left[
                      \begin{array}{cc}
                        g & 0 \\
                        0 & 1 \\
                      \end{array}
                    \right],   g\right).
\end{equation}
Similar to \eqref{compl},
\[
  G_\C:=(\GL_{n}(\C)\times \GL_{n}(\C))\times (\GL_{n-1}(\C)\times \GL_{n-1}(\C)),
\]
is a complexification of $G$, and
\[
  H_\C:=\GL_{n-1}(\C)\times \GL_{n-1}(\C)
\]
is a complexification of $H$. We view $H_\C$ as a subgroup of $G_\C$ by the complexification of the inclusion map \eqref{comp3}.

As in the real case, an element of  $\frac{1}{2}+\Z$ is called a critical place for $\pi_\mu\times \pi_\nu$ if it is not a pole of the local L-function $\oL(s, \pi_\mu \times \pi_\nu)$ or $\oL(1-s, \pi_\mu^\vee \times \pi_\nu^\vee)$.  Assume that $\mu$ and $\nu$ are compatible in the sense that there is an integer $j$ such that
 \begin{equation}\label{homf02}
  \Hom_{H_\C}(F_\xi^\vee, {\det}^j\otimes {\det}^j) \neq 0,
\end{equation}
where
\[
  F_\xi:=F_\mu \otimes F_\nu.
 \]
Note that the algebraic character ${\det}^j\otimes {\det}^j$ of $H_\C$ restricts to the character $\abs{\det}_\K^j$ of $H$, where ``$\abs{\,\,}_\K$" denotes the normalized absolute value of $\K$ (that is, $\abs{z}_\K=\iota_1(z)\iota_2(z)$ for all $z\in \K$).  Therefore, \eqref{homf02} is equivalent to
 \begin{equation}\label{homf2}
  \Hom_{H}(F_\xi^\vee, \abs{\det}_\K^j) \neq 0.
\end{equation}
Similar to the proof of \cite[Theorem 2.3]{KS}, one verifies that  $\frac{1}{2}+j$ is a critical place for $\pi_\mu\times \pi_\nu$, and conversely, all critical places are of this form under the assumption that $\mu$ and $\nu$ are compatible (see \cite[Theorem 2.21]{Rag2}).

 Fix a nonzero element $\phi_F$ of the hom space of \eqref{homf2}. As in the real case, the Rankin-Selberg integrals for $\pi_\mu\times \pi_\nu$ produce a nonzero element
\begin{equation}\label{homi}
  \phi_\pi\in \Hom_H(\pi_\xi, \abs{\det}_\K^{-j}),
\end{equation}
where $\pi_\xi:=\pi_\mu\widehat \otimes \pi_\nu$ is a Casselman-Wallach representation of $G$.

Write
\[
  \tilde K:=\GU(n)\times \GU(n-1)\subset G \quad \textrm{and}\quad C:=H\cap \tilde K=\oU(n-1)\subset H.
\]
Note that the cohomology spaces
\[
\oH^{b_n+b_{n-1}}(\g, \tilde K; F_\xi^\vee\otimes \pi_\xi)\quad\textrm{ and }\quad\oH^{b_n+b_{n-1}}(\h, C; \abs{\det}_\K^0)
\]
are both one dimensional. Here and as before, the complexified Lie algebra of a Lie group is denoted by the corresponding lower case gothic letter.

The nonvanishing hypothesis at the complex place is formulated as follows.
\begin{introtheorem}\label{main2}
By restriction of cohomology, the $H$-invariant linear functional
\[
 \phi_F\otimes \phi_\pi:  F_\xi^\vee\otimes \pi_\xi \rightarrow \abs{\det}_\K^0=\abs{\det}_\K^{j}\otimes \abs{\det}_\K^{-j}
\]
induces a linear map
\begin{equation}\label{homh2}
   \oH^{b_n+b_{n-1}}(\g, \tilde K; F_\xi^\vee\otimes \pi_\xi)\rightarrow
   \oH^{b_n+b_{n-1}}(\h, C; \abs{\det}_\K^0)
\end{equation}
which is nonzero.
\end{introtheorem}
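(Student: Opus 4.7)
The plan is to transport the entire strategy used to prove Theorem \ref{main} to the complex-group setting, taking advantage of one crucial simplification: the groups $\GU(n)$ and $\oU(n-1)$ are connected, so all the subtleties involving $\GO(n)/\GO(n)^\circ$ and the sign-character bookkeeping collapse, and $\Omega(\mu)$ has a unique element whenever $\mu$ is pure. Thus there is no need to choose among cohomologically induced representations or to separate the ``$\sgn^j$'' and ``$\sgn^{j+1}$'' pieces of the $K$-type decomposition; the target of the restriction map on cohomology is automatically $1$-dimensional, and the proof reduces to showing that one explicit cohomology class is sent to a nonzero class.

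First I would repeat the setup of Section \ref{s2}. The complexified Lie algebra $\gl_n(\K)\otimes_\R \C$ is $\gl_n(\C)\times \gl_n(\C)$, on which the Cartan involution corresponding to $\GU(n)$ is $(x,y)\mapsto (-\bar y^{\mathrm t}, -\bar x^{\mathrm t})$. A fundamental Cartan subalgebra and a $\theta$-stable Borel $\b_n$ satisfying $\b_n\cap\bar{\b_n}=\t_n$ are constructed just as before, with weights now indexed by the two isomorphisms $\iota_1,\iota_2\colon \K\to\C$. The decomposition $\g=\h\oplus\b$ and the classical multiplicity-one consequences (Lemmas \ref{mulone}, \ref{mulone2}) go through verbatim, and the minimal-$K$-type, Cartan and PRV material of Section \ref{cpprv} is purely formal and reusable. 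Analogs of $\tau_\mu$, $\tau_\xi$, $\tau_n$, $\tau_{n,n-1}$ and of the nonvanishing statements \ref{nv1}, \ref{nv10}, \ref{nv2} follow by the same arguments.

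Second I would build the cohomological representation $W_\mu^\infty$ by cohomological induction from $\b_n$, as in Section \ref{cohom}. Temperedness of $W_\mu^\infty|_{\SL_n(\K)}$ is again \cite[Theorem 6.8.1]{Wa1}, and the Bottom-layer calculation identifies the unique minimal $K$-type $\tau_\mu^+$ with multiplicity one; since $\GU(n)$ is connected, there is no $\oO(n)$-versus-$\SO(n)$ issue and Lemma \ref{rglp} becomes trivial. The translation step (Lemma \ref{kappa} and Proposition \ref{transmu}) uses only Mackey isomorphism, primary decomposition with respect to the center of the universal enveloping algebra, and the infinitesimal-character argument, so the same proof yields the embedding $\tau_\mu^+\subset F_\mu\otimes W_{\tilde\mu}^\infty$ generating a copy of $W_\mu^\infty$.

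Third I would carry out the matrix-coefficient construction of Section \ref{ctest}. In the balanced case $\tilde\xi=(\tilde\mu;\tilde\nu)$, the representation $W_{\tilde\xi}^\infty$ has trivial central character and is unitary tempered. The tempered matrix-coefficient estimate \cite[Theorem 1.2]{Sun2} together with integrability of the Harish-Chandra $\Xi$-function on $H$ (which is the restriction to the complex group) yields absolute convergence of
\[
u\mapsto \int_H \langle h.u, v_{\tilde\xi}\rangle_{\tilde\xi}\,dh,
\]
and positivity on the minimal $K$-type (now simply $\tau_{\tilde\xi}^+$, with no $g_C$-translate) follows from \cite[Theorem 1.5]{Sun1}. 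This produces the complex analog of Lemma \ref{test2}; Proposition \ref{gxi} and Proposition \ref{nprv} then transfer nonvanishing from $\tilde\xi$ to $\xi$ via the PRV component $\tau_\xi^+\subset\tau_{-\xi}^\vee\otimes\tau_{\tilde\xi}^+$, and uniqueness is supplied by the multiplicity-one theorem for $(\GL_n(\K),\GL_{n-1}(\K))$ \cite[Theorem B]{AzG}, \cite[Theorem B]{SZ}. Finally the assembly of Section \ref{pmanin} proceeds identically: the cohomology class given by $\wedge^{b_n+b_{n-1}}(\g/\tilde\k)\xrightarrow{\eta_{n,n-1}}\tau_{n,n-1}\xrightarrow{\varphi_\xi}F_\xi^\vee\otimes W_\xi^\infty$ is sent by $\phi_F\otimes\phi_\pi$ to a nonzero class because the composition $(\phi_F\otimes\phi_\pi)\circ\varphi_\xi$ is nonzero on the one-dimensional $\tau_{n,n-1}^C$ and $\eta_{n,n-1}\circ\iota_{n,n-1}$ surjects onto $\tau_{n,n-1}^C$. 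The main obstacle I anticipate is purely bookkeeping: each weight is a pair corresponding to $\iota_1$ and $\iota_2$, and one must verify that the purity condition and the compatibility \eqref{homf02} interact correctly with Cartan and PRV products on both factors simultaneously; but no genuinely new analytic or representation-theoretic input beyond the real case is needed.
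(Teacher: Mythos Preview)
Your proposal is correct and follows essentially the same approach as the paper's own proof, which explicitly states that Theorem \ref{main2} is proved by transporting the argument for Theorem \ref{main} to the complex setting and then sketches precisely the steps you outline (theta-stable Borel, cohomological induction, bottom layer, translation, matrix-coefficient positivity on the minimal $K$-type, PRV transfer, and the final assembly via the sequence $\wedge^{b_n+b_{n-1}}(\h/\c)\to\wedge^{b_n+b_{n-1}}(\g/\tilde\k)\to\tau_{n,n-1}\to F_\xi^\vee\otimes W_\xi^\infty\to\abs{\det}_\K^0$). One small correction: on the complexification $\gl_n(\C)\times\gl_n(\C)$ the Cartan involution is the $\C$-linear map $(x,y)\mapsto(-y^{\mathrm t},-x^{\mathrm t})$, without complex conjugation (conjugation in $\K$ already manifests as the swap of the two factors), but this does not affect your argument.
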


\subsection{A sketch of proof}
The proof of Theorem \ref{main2} is similar to that of Theorem \ref{main}. We sketch a proof for the sake of completeness.

Recall that
\[
  \g^\K_n:=\g_n\times \g_n \qquad(\g_n:=\gl_n(\C))
\]
is viewed as the complexification of $\gl_n(\K)$. The corresponding complex conjugation  is given by
\begin{equation}\label{conk}
 \g^\K_n\rightarrow \g^\K_n,\quad  (x,y)\mapsto (\bar y, \bar x).
\end{equation}
The  Cartan involution corresponding to the maximal compact subgroup  $\oU(n)\subset \GL_n(\K)$ is
\begin{equation}\label{thetak}
  \g^\K_n\rightarrow \g^\K_n,\quad (x,y)\mapsto (-y^\mathrm t, -x^\mathrm t).
\end{equation}
Its fixed point set, which is the complexified Lie algebra of $\oU(n)$, is equal to
 \[
  \g_n^{\oU}:=\{(x,-x^{\mathrm t})\mid x\in \g_n\}.
\]

Recall the Borel subalgebra $\b_n=\t_n\ltimes \n_n$ of $\g_n$ from Section \ref{str}. Then the Borel subalgebra
\[
  \b^\K_n:=\b_n\times \b_n\subset \g_n^\K
\]
is ``theta stable" in the sense that it is stable under the Cartan involution \eqref{thetak}, and satisfies that
\[
  \b^\K_n\cap \bar \b^\K_n=\t^\K_n:=\t_n\times \t_n,
\]
where $\bar \b^\K_n:=\overline{\b_n}\times \overline{\b_n}$, which equals the image of $\b^\K_n$ under the map \eqref{conk}.

Similar to \eqref{dlambda}, for every $\lambda\in \R^{2n}\subset \C^{2n}=(\t^\K_n)^*$, put
\begin{equation}\label{dlambdac}
  \abs{\lambda}:=\textrm{the unique $\b_n^\K$-dominant element in the $\oW_{\g_n^\K}$-orbit of $\lambda$,}
\end{equation}
where $\oW_{\g_n^\K}$ denotes the Weyl group of $\g_n^\K$ with respect to the Cartan subalgebra $\t_n^\K$.
Denote by $2\rho_n\in(\t^\K_n)^*$ the sum of all weights of $\n_n^\K:=\n_n\times \n_n$.

As in \eqref{vmu},
\[
  V_\mu:=\oU(\g_n^\K)\otimes_{\oU(\bar \b_n^\K)} \C_{\abs{\mu}+2\rho_n}
  \]
  is an irreducible $(\g_n^\K, \oT_n^{\oU})$-module, where $\oT_n^{\oU}$ is the Cartan subgroup of $\oU(n)$ with complexified Lie algebra
  \[
  \t_n^{\oU}:=\t_n^\K\cap \g_n^{\oU}.
\]

Denote by $\Pi$ the Bernstein functor from the category of $(\g_n^\K, \oT_n^{\oU})$-modules to the category of $(\g_n^\K, \oU(n))$-modules, and write $\Pi_i$ for its $i$-th left derived functor ($i\in \Z$). Then
\[
  \Pi_i(V_\mu)=0\quad\textrm{unless}\quad i=S_n:=\dim \n_n=\frac{n(n-1)}{2},
  \]
and the Casselman-Wallach smooth globalization $W_\mu^\infty$ of
\[
 W_\mu:=\Pi_{S_n}(V_\mu)
\]
is isomorphic to $\pi_\mu$.

For every $\lambda\in (\t^\K_n)^*$, write $[\lambda]\in (\t_n^{\oU})^*$ for its restriction to $\t_n^{\oU}$.  Denote by $\tau_\mu$ and $\tau_{-\mu}$ the irreducible representations of $\oU(n)$ of highest weights $[\abs{\mu}]$ and $[\abs{-\mu}]$, respectively. Write $2\rho_n^{\oU}\in (\t_n^{\oU})^*$ for the sum of all weights of
\[
  \n_n^{\oU}:=\n_n^\K\cap \g_n^{\oU}.
\]
Denote by $\tau_n$  the irreducible representation of $\oU(n)$ of highest weight
\[
[2\rho_n]-2\rho_n^{\oU}=2\rho_n^{\oU}.
\]
Write $\tau_\mu^+$ for the Cartan product of $\tau_{\mu}$ and $\tau_n$. Then $\tau_\mu^\vee$, $\tau_{-\mu}^\vee$  and $\tau_\mu^+$ occur with multiplicity one in $F_\mu^\vee$,  $F_\mu$ and $W_\mu^\infty$, respectively.

Put
\[
 \tilde \mu:=(\mu_1-\mu_n, \mu_2-\mu_{n-1},\cdots, \mu_n-\mu_1; \mu_{n+1}-\mu_{2n}, \mu_{n+2}-\mu_{2n-1},\cdots, \mu_{2n}-\mu_{n+1})
\]
so that $F_{\tilde \mu}$ is the Cartan product of $F_\mu$ and $F_\mu^\vee$.
Similar to Proposition \ref{transmu}, the irreducible $\oU(n)$-representation
 \[
   \tau_\mu^+\subset \tau_{-\mu}^\vee\otimes \tau_{\tilde \mu}^+
   \subset F_\mu\otimes W_{\tilde \mu}
 \]
 generates  an irreducible $(\g_n^\K,\oU(n))$-module  which is  isomorphic to $W_\mu$. Applying the above argument to $\nu$, we get
 spaces
 \[
   \tau_\nu^+\subset \tau_{-\nu}^\vee\otimes \tau_{\tilde \nu}^+
   \subset F_\nu\otimes W_{\tilde \nu},
 \]
 and $\tau_\nu^+$ generates  an irreducible $(\g_{n-1}^\K,\oU(n-1))$-submodule of  $F_\nu\otimes W_{\tilde \nu}$ which is  isomorphic to $W_\nu$.

 Note that the analog of Proposition \ref{nprv} also holds for unitary groups. Therefore by using translations, the same proof as in Section \ref{ctest} shows that the functional $\phi_\pi$ does not vanish on
 \[
   \tau_{\xi}^+:=\tau_\mu^+\otimes \tau_{\nu}^+\subset W_\xi^\infty:=W_\mu^\infty\widehat \otimes W_\nu^\infty.
 \]
Similar to Lemma \ref{nv1}, the functional $\phi_F$ does not vanish on
\[
  \tau_{\xi}^\vee:=\tau_{\mu}^\vee\otimes \tau_{\nu}^\vee\subset F_\xi^\vee=F_\mu^\vee \otimes F_\nu^\vee.
\]

As in Lemma \ref{ngln2}, the representation $\tau_{n,n-1}:=\tau_n\otimes \tau_{n-1}$ of $\oU(n)\times \oU(n-1)$ occurs with multiplicity one in $\wedge^{b_n+b_{n-1}}(\g/\tilde \k)$. Similar to \eqref{comhc0}, we have a sequence
\begin{equation}\label{comf}
  \wedge^{b_n+b_{n-1}}(\h/\c) \rightarrow \wedge^{b_n+b_{n-1}}(\g/\tilde \k)\rightarrow \tau_{n,n-1}\rightarrow  F_\xi^\vee\otimes W_{\xi}^\infty\xrightarrow{\phi_F\otimes \phi_\pi} \abs{\det}_\K^0.
\end{equation}
Finally, as in Section  \ref{pmanin}, the composition of \eqref{comf} is nonzero. This finishes the proof of Theorem \ref{main2}.

\section*{Acknowledgements}
The author is grateful to  Michael Harris for the constant encouragements and the suggestion of using translation functors to attack the problem. He thanks Fabian Januszewski, Joachim Mahnkopf and A. Raghuram for comments, and thanks Hendrik Kasten for many English corrections to an early  manuscript of this paper.
Finally the author would like to thank the referees for providing very insightful comments to improve the paper. The work was supported by NSFC Grants 11525105, 11321101 and 11531008.


\begin{thebibliography}{99}
\bibitem[AzG]{AzG}
A. Aizenbud and D. Gourevitch, \textit{Multiplicity one theorem for $({\GL}_{n+1}(\R),{\GL}_n(\R))$}, Selecta Math. (N.S.) 15, (2009), 271-294.


\bibitem[BN]{BN}
A. Borel and N. Wallach, \textit{Continuous cohomology, discrete subgroups, and representations
of reductive groups}, second edition, Mathematical Surveys and Monographs, 67. American
Mathematical Society, Providence, RI, 2000.


\bibitem[BK]{BK}
J. Bernstein and B. Kr\"{o}tz, \textit{Smooth Frechet globalizations of
Harish-Chandra modules},  Isr. J. Math. 199 (2014), 45-111.


\bibitem[Cas]{Cass}
W. Casselman, \textit{Canonical extensions of Harish-Chandra moudles
to representations of $G$}, Can. Jour. Math. 41, (1989), 385-438.

\bibitem[Clo]{Clo}
L. Clozel, \textit{Motifs et formes automorphes: applications du
principe de fonctorialit\'{e}}.(French) [Motives and automorphic
forms: applications of the functoriality principle] Automorphic
forms, Shimura varieties, and L-functions, Vol. I (Ann Arbor, MI,
1988), 77-159, Perspect. Math., 10, Academic Press, Boston, MA,
1990.


\bibitem[Di]{Di}
J. Dixmier, \textit{Enveloping algebras}, Graduate Studies in Mathematics 11, American Mathematical
Society, Providence, RI, 1996.

\bibitem[Fl]{Fl}
M. Flensted-Jensen, \textrm{Discrete series for semisimple symmetric spaces}, Ann. of Math. (2) 111 (1980),
253-311.


\bibitem[GH]{GH}
H. Grobner and M. Harris, \textit{Whittaker periods, motivic periods, and special values of tensor product L-functions}, J. Inst. Math. Jussieu (2015), 1-59.


\bibitem[He1]{He1} E. Hecke, \textit{\"{U}ber die Bestimmung Dirichletscher Reihen durch ihre Funktionalgleichung},
Mathematische Annalen 112 (1936), 664-699.

\bibitem[He2]{He2} E. Hecke, \textit{Modulfunktionen und Dirichletschen Reihen mit Eulerscher Produktentwicklung
I.}, Mathematische Annalen 114 (1937), 1-28.

\bibitem[He3]{He3}
E. Hecke, \textit{\"{U}ber Modulfunktionen und die Dirichletschen Reihen mit Eulerscher Produktentwicklung
II.}, Mathematische Annalen 114 (1937), 316-351.


\bibitem[Jac]{Jac}
H. Jacquet, \textit{Archimedean Rankin-Selberg integrals}, in
\textit{Automorphic Forms and $L$-functions II: Local Aspects},
Proceedings of a workshop in honor of Steve Gelbart on the occasion
of his 60th birthday, Contemporary Mathematics, volumes 489,
57--172, AMS and BIU 2009.

\bibitem[Jan1]{Jan1}
F. Januszewski, \textit{Modular symbols for reduc-
tive groups and p-adic Rankin-Selberg convolutions over number fields},
J. reine angew. Math. 653 (2011), 1-45.

\bibitem[Jan2]{Jan2}
F. Januszewski, \textit{On p-adic L-functions for $\GL(n)\times  \GL(n-1)$ over totally real fields}, Int Math Res Notices 17 (2015), 7884-7949.

\bibitem[Jan3]{Jan3}
F. Januszewski, \textit{p-adic L-functions for Rankin-
Selberg convolutions over number fields}, arXiv:1501.04444.

\bibitem[Jan4]{Jan4}
F. Januszewski, \textit{On period
relations for automorphic L-functions}, arXiv:1504.06973.

\bibitem[KS]{KS}
H. Kasten and C.-G. Schmidt, \textit{On critical values of
Rankin-Selberg convolutions}, International Journal of Number Theory, 9 (2013), 205-256.

\bibitem[KMS]{KMS}
D. Kazhdan, B. Mazur and C.-G. Schmidt, \textit{Relative modular
symbols and Rankin-Selberg convolutions}, J. reine angew. Math. 519
(2000), 97-141.

\bibitem[K]{K}
A. Knapp, \textit{Representation Theory of Semisimple Groups: An Overview Based on Examples}, Princeton University Press, Princeton, N.J., 1986.

\bibitem[KV]{KV}
A. Knapp and D. Vogan, \textit{Cohomological induction and unitary
Representations}, Princeton University Press, Princeton, N.J., 1995.



\bibitem[Mah]{Mah}
J. Mahnkopf, \textit{Cohomology of arithmetic groups, parabolic
subgroups and the special values of L-functions of $\GL_n$}, J.
Inst. Math. Jussieu., 4(4), 553-637 (2005).


\bibitem[PRV]{PRV}
K.R. Parthasarathy, R. R. Rao and V.S. Varadarajan, \textit{Representations
of complex semi-simple Lie groups and Lie algebras}, Ann. of Math. 85 (1967),
383-429.


\bibitem[Rag1]{Rag}
A. Raghuram, \textit{On the special values of certain Rankin-Selberg
L-functions and applications to odd symmetric power L-functions of
modular forms}, Int. Math. Res. Not. Vol. (2010) 334-372,
doi:10.1093/imrn/rnp127.

\bibitem[Rag2]{Rag2}
A. Raghuram, \textrm{Critical values of Rankin-Selberg L-functions for $\GL(n) \times \GL(n-1)$ and the symmetric cube L-functions for $\GL(2)$}, Forum Math., DOI: 10.1515/forum-2014-0043.

\bibitem[RS1]{RS1}
A. Raghuram and F. Shahidi, \textit{Functoriality and special values of
L-functions}, in: \textit{Eisenstein series and Applications}, eds
W. T. Gan, S. Kudla, Y. Tschinkel, Progress in Mathematics 258
(Boston, 2008), pp. 271-294.

\bibitem[RS2]{RS}
A. Raghuram and F. Shahidi, \textit{On certain period relations for
cusp forms on $\GL_n$}, Int. Math. Res. Not. Vol. (2008),
doi:10.1093/imrn/rnn077.



\bibitem[Sch1]{Sch}
C.-G. Schmidt, \textit{Relative modular symbols and p-adic
Rankin-Selberg convolutions}, Invent. Math. 112 (1993), 31-76.


\bibitem[Sch2]{Sch2}
C.-G. Schmidt, \textit{Period relations
and p-adic measures}, manuscr. math. 106 (2001), 177-201.


\bibitem[Sun1]{Sun1}
B. Sun, \textit{Positivity of matrix coefficients of representations with real infinitesimal characters}, Isr. J. Math. 170 (2009), 395-410.

\bibitem[Sun2]{Sun2}
B. Sun, \textit{Bounding matrix coefficients for smooth vectors of tempered representations},
Proc. Amer. Math. Soc. 137 (1), (2009) 353-357.

\bibitem[SZ]{SZ}
B. Sun and C.-B. Zhu, \textit{Multiplicity one theorems: the Archimedean case}, Ann. of Math. (2) 175 (2012), 23-44.

\bibitem[V1]{V}
D. Vogan, \textit{Unitarizability of certain series of representations}, Ann. of
Math. 120 (1984), 141-187.

\bibitem[V2]{V2}
D. Vogan, \textit{The unitary dual of GL(n) over an archimedean field}, Invent. Math. 83 (1986), 449-505.

\bibitem[VZ]{VZ}
D. Vogan and G. Zuckerman, \textit{Unitary representations with
non-zero cohomology}, Compositio Math. 53 (1984)), 51-90.

\bibitem[Wa1]{Wa1}
N. Wallach, \textit{Real Reductive Groups I}, Academic Press, San
Diego, 1988.

\bibitem[Wa2]{Wa2}
N. Wallach, \textit{Real Reductive Groups II}, Academic Press, San
Diego, 1992.

\bibitem[Wa3]{Wa3}
N. Wallach, \textit{ On the unitarizability of derived functor modules, Invent. Math.} 78 (1984), 131-141.

\bibitem[Ya]{Ya}
O. Yacobi, \textit{An analysis of the multiplicity spaces in branching of symplectic groups}, Selecta Math N.S., 16 (2010), 819-855.

\end{thebibliography}
\end{document}